\def\bint{{\ifinner\rlap{\bf\kern.30em--}
\int\else\rlap{\bf\kern.35em--}\int\fi}\ignorespaces}
\def\sbint{{\ifinner\rlap{\bf\kern.32em--}
\hspace{0.078cm}\int\else\rlap{\bf\kern.45em--}\int\fi}\ignorespaces}
\def\red{\color{red}}
\def\rr{\mathbb{R}}
\def\rn{\mathbb{R}^n}
\def\cc{\mathbb{C}}
\def\nn{\mathbb{N}}
\def\zz{\mathbb{Z}}
\def\lz{\lambda}
\def\ls{\lesssim}
\def\fz{\infty}
\def\az{\alpha}
\def\cx{{\mathcal X}}
\def\BMO{\mathop\mathrm{\,BMO\,}}
\def\r{\right}
\def\lf{\left}
\def\noz{{\nonumber}}
\def\r{\right}
\def\lf{\left}
\def\at{{\mathop\mathrm{\,at\,}}}
\def\supp{{\mathop\mathrm{\,supp\,}}}
\def\loc{{\mathop\mathrm{\,loc\,}}}
\def\fin{{\mathop\mathrm{\,fin\,}}}
\def\BMO{{\mathop\mathrm{\,BMO\,}}}
\def\eqref#1{(\ref{#1})}
\newtheorem{theorem}{Theorem}[section]
\newtheorem{lemma}[theorem]{Lemma}
\newtheorem{corollary}[theorem]{Corollary}
\newtheorem{proposition}[theorem]{Proposition}
\theoremstyle{definition}
\newtheorem{remark}[theorem]{Remark}
\newtheorem{definition}[theorem]{Definition}
\numberwithin{equation}{section}
\begin{document}

\title{\bf\Large John--Nirenberg--Campanato Spaces
\footnotetext{\hspace{-0.35cm} 2010 {\it
Mathematics Subject Classification}. Primary 42B35; Secondary 42B30, 46E35. \endgraf
{\it Key words and phrases}. closed cube, Eudidean space,
John--Nirenberg--Campanato space, John--Nirenberg inequality, polymer, duality.
 \endgraf
This project is supported by the National
Natural Science Foundation of China   (Grant Nos. 11571039, 11761131002, 11671185 and 11871100).}}
\date{ }
\author{Jin Tao, Dachun Yang and Wen Yuan\footnote{Corresponding author/{\red  January 12, 2019}/Final version.}}
\maketitle

\vspace{-0.8cm}

\begin{center}
\begin{minipage}{13cm}
{\small {\bf Abstract}\quad
Let $p\in (1,\infty)$, $q\in[1,\infty)$, $\alpha\in [0,\infty)$
and $s$ be a non-negative integer. In this article, the authors introduce the John--Nirenberg--Campanato space
$JN_{(p,q,s)_\alpha}(\mathcal{X})$, where $\mathcal{X}$ is  $\rn$ or any closed cube
$Q_0\subset\rn$, which when $\alpha=0$ and $s=0$ coincides with the $JN_p$-space introduced
by F. John and L. Nirenberg in the sense of equivalent norms.
The authors then give the predual space of $JN_{(p,q,s)_\alpha}(\mathcal{X})$ and a
John--Nirenberg type inequality of John--Nirenberg--Campanato spaces.
Moreover, the authors prove that the classical Campanato space serves as a limit space of $JN_{(p,q,s)_\alpha}(\mathcal{X})$
when $p\to \infty$. }
\end{minipage}
\end{center}

\vspace{0.2cm}

\section{Introduction}

In their celebrated article \cite{JN61}, John and Nirenberg introduced the space $\BMO(\rn)$
which proves the dual space of the Hardy space $H^1(\rn)$ in \cite{FS72}.
A more general result was later given by  Coifman and Weiss \cite{CW77} who
proved that, for any $p\in(0,1]$, $q\in[1,\infty]$
and $s$ being the non-negative integer not smaller than $n(\frac1p-1)$,
the dual space of the Hardy space $H^p(\rn)$ is $\mathcal{C}_{\frac1p-1,\,q,\,s}(\rn)$,
the Campanato space introduced in \cite{Campanato64} which is known to  coincide with
$\BMO(\rn)$ when $p=1$.

In the same article \cite{JN61}, John and Nirenberg also introduced another
space $JN_p(Q_0)$ with $p\in(1,\infty)$ and showed that $JN_p(Q_0)\subset L^{p,\infty}(Q_0)$,
where $Q_0$ is a closed cube in $\rn$ and $Q_0\subsetneqq\rn$.
The limit of  $JN_p(Q_0)$ when $p\to\fz$ is just $\BMO(Q_0)$.
Since then, there have been several articles concerning the space $JN_p(Q_0)$
and some of their variants. For instance, Campanato \cite{Campanato66} used these spaces
to study the interpolation of operators;
Aalto et al. \cite{ABKY11} studied
$JN_p$ in the context of doubling metric spaces;
In \cite{hmv14}, Hurri-Syrj\"anen et al. established a local-to-global result for the
space $JN_p(\Omega)$ on an open subset $\Omega$ of $\rn$, namely, it was proved
that $\|\cdot\|_{JN_p(\Omega)}$ is dominated by its localized version
$\|\cdot\|_{JN_{p,\tau}(\Omega)}$ modulus constants,
where $\tau\in[1,\fz)$ and $\|\cdot\|_{JN_{p,\tau}(\Omega)}$ is defined as $\|\cdot\|_{JN_p(\Omega)}$
with an additional requirement $\tau Q\subset\Omega$ for all chosen cubes $Q$ in the definition
of $\|\cdot\|_{JN_p(\Omega)}$;
 Marola and   Saari \cite{ms16} studied the corresponding results on metric spaces
and obtained the equivalence between the local and the global $JN_p$ norm;
moreover, in both articles \cite{hmv14,ms16}, a global John--Nirenberg inequality for
$JN_p(\Omega)$ was established;
Berkovits et al. \cite{Berkovits16} applied the dyadic variant of $JN_p(Q_0)$
in the study of self-improving properties of some Poincar\'e type inequalities;
Dafni et al. \cite{DHKY18} showed the non-triviality of
$JN_p(Q_0)$ via proving $L^p(Q_0)\subsetneqq JN_p(Q_0)$ for any $p\in(1,\fz)$;
Dafni et al. \cite{DHKY18} also found the predual space of $JN_p(Q_0)$.
Very recently, A. Brudnyi and Y. Brudnyi \cite{BB18} introduced a more general function space
on the unit cube, which is a generalization of both classical BV spaces
and the John--Nirenberg spaces $JN_p([0,1]^n)$. It should be mentioned that various BMO type spaces have attracted lots of attentions in recent years; see, for instance, \cite{fpw98, mp98, BBM15,ABBF16,m16,ylk}.

Inspired by the relation between $\BMO$ and the Campanato spaces, as well as the relation between $\BMO$ and $JN_p$,
the purpose of this article is to introduce and study a Campanato type space $JN_{(p,q,s)_\alpha}(\mathcal{X})$
in spirit of the John--Nirenberg space
$JN_p(Q_0)$, which contains $JN_p(Q_0)$ as a special case. This John--Nirenberg--Campanato
space is defined not only on a closed cube $Q_0$ but
also on the whole space $\rn$.
The predual space
and a John--Nirenberg type inequality of John--Nirenberg--Campanato spaces are given.
It is also proved that the classical Campanato space serves as a limit space of $JN_{(p,q,s)_\alpha}(\mathcal{X})$.

Throughout the whole article, the \emph{symbol $\mathcal{X}$} represents either
$\rn$ or a closed cube $Q_0$ in $\rn$ and $Q_0\subsetneqq\rn$.
In what follows, we always use the \emph{symbol $Q_0$} to denote a closed cube of $\rn$
and $Q_0\subsetneqq\rn$, and
the \emph{symbol ${\mathbf 1}_E$} to denote the characteristic function of any set $E\subset\rn$.
For any integrable function $f$ and cube $Q$, let
$$f_Q:=\fint_Q f:=\frac{1}{|Q|}\int_Q f(x)\,dx.$$
Moreover, for any $s\in\zz_+$ (the set of all non-negative integers),
$P_Q^{(s)}(f)$  denotes the unique polynomial of degree not greater than $s$ such that
\begin{align}\label{P_Q^s(f)}
\int_Q \lf[f(x)-P_Q^{(s)}(f)(x)\r]x^\gamma\,dx=0,\quad \forall\,|\gamma|\le s,
\end{align}
where $\gamma:=(\gamma_1,\cdots,\gamma_n)\in \zz_+^n:=(\zz_+)^n$ and
$|\gamma|:=\gamma_1+\cdots+\gamma_n$.
It is well known that $P_Q^{(0)}(f)=f_Q$ and, for any $s\in\zz_+$,
there exists a constant $C_{(s)}\in[1,\infty)$, independent of $f$ and $Q$, such that
\begin{align}\label{C_{(s)}}
  \lf|P_Q^{(s)}(f)(x)\r|\le C_{(s)} \fint_Q |f|,\quad \forall\,x\in Q.
\end{align}
These fundamental properties can be found in, for instance,
\cite{Li08, LiangYang13, Lu95, TaiblesonWeiss80}.

Now, let us recall the definition of the Campanato space. In what follows,
for any $q\in[1,\infty)$, the \emph{symbol $L^q(\mathcal{X})$}
denotes the spaces of all measurable functions
$f$ such that $$\|f\|_{L^q(\mathcal{X})}:=\lf[\int_{\mathcal{X}}|f(x)|^q\,dx\r]^\frac1q<\infty$$
and the \emph{symbol $L_{\loc}^q(\mathcal{X})$} the set of all measurable functions $f$ such that
$f{\mathbf 1}_E\in L^q(\mathcal{X})$ for any bounded set $E\subset\mathcal{X}$.

\begin{definition}\label{Campanato}
  Let $\alpha\in[0,\infty)$, $q\in[1,\infty)$and $s\in\zz_+$.
  The \emph{Campanato space} $\mathcal{C}_{\alpha,q,s}(\mathcal{X})$
  is defined by setting
  $$\mathcal{C}_{\alpha,q,s}(\mathcal{X}):=\lf\{f\in L^1_{\loc}(\mathcal{X}):\,\,\|f\|_{\mathcal{C}_{\alpha,q,s}(\mathcal{X})}<\infty\r\},$$
  where $$\|f\|_{\mathcal{C}_{\alpha,q,s}(\mathcal{X})}:=
  \sup|Q|^{-\alpha}\lf[\fint_{Q}\lf|f-P_{Q}^{(s)}(f)\r|^q\r]
   ^{\frac 1q}$$
  and the supremum is taken over all closed cubes $Q$ in $\mathcal{X}$.

  The \emph{dual space $(\mathcal{C}_{\alpha,q,s}(\mathcal{X}))^\ast$} of $\mathcal{C}_{\alpha,q,s}(\mathcal{X})$
  is defined to be the set of all continuous linear functionals on $\mathcal{C}_{\alpha,q,s}(\mathcal{X})$
  equipped with weak-$\ast$ topology.
\end{definition}

Recall that, for any given $p\in(1,\infty)$, the \emph{John--Nirenberg space  $JN_p(Q_0)$}
is defined to be the set of all $f\in L^1(Q_0)$ such that
\begin{equation}\label{jnp}
\|f\|_{JN_p(Q_0)}:=\sup \lf[ \sum_i|Q_i|\lf(\fint_{Q_i}\lf|f-f_{Q_i}\r|
   \r)^p \r]^\frac1p<\infty,
\end{equation}
where $Q_0$ denotes a closed cube of $\rn$, $Q_0\subsetneqq\rn$ and
the supremum is taken over all collections of pairwise disjoint cubes
$\{Q_i\}_i$ in $Q_0$. It is obvious that $L^p(Q_0)\subset JN_p(Q_0)$,
while the nontriviality was showed by Dafni et al. \cite[Proposition 3.2]{DHKY18},
namely, $L^p(Q_0)\subsetneqq JN_p(Q_0)$. Dafni et al. \cite[Theorem 6.6]{DHKY18}
also found that the predual
of $JN_p(Q_0)$ is $HK_{p'}(Q_0)$, a Hardy-kind space, where $\frac1p+\frac1{p'}=1$.
Moreover, for any given $p\in(1,\infty)$ and $q\in [1,\fz)$, Dafni et al. \cite{DHKY18}
introduced the space $JN_{p,q}(Q_0)$ which is defined
to be the set of all $f\in L^1(Q_0)$ such that
$$\|f\|_{JN_{p,q}(Q_0)}:=\sup \lf[\sum_i|Q_i|\lf(\fint_{Q_i}\lf|f-f_{Q_i}\r|^q
   \r)^{\frac qp} \r]^\frac1p<\infty,$$
where the supremum is taken the same as in \eqref{jnp}. It was proved
in \cite[Proposition 5.1]{DHKY18} that, when $q\in [1,p)$,
$JN_{p,q}(Q_0)$ and $JN_p(Q_0)$ coincide with equivalent norms
and, when $q\in [p,\infty)$, $JN_{p,q}(Q_0)$ and $L^q(Q_0)$ as
sets coincide.

Motivated by the above two spaces, we introduce the John--Nirenberg--Campanato
space as follows, which is a natural generalization of the Campanato space;
see Proposition \ref{Campanato&JNpqas} below for more details.

\begin{definition}\label{JNpqa}
Let $p\in(1,\infty)$, $q\in[1,\infty)$, $s\in\zz_+$ and $\alpha\in[0,\infty)$.
The \emph{John--Nirenberg--Campanato space} $JN_{(p,q,s)_\alpha}(\mathcal{X})$
is defined by setting
$$JN_{(p,q,s)_\alpha}(\mathcal{X}):=\lf\{f\in L^1_{\loc}(\mathcal{X}):\,\,\|f\|_{JN_{(p,q,s)_\alpha}(\mathcal{X})}<\infty\r\},$$
where $$\|f\|_{JN_{(p,q,s)_\alpha}(\mathcal{X})}:=
\sup\lf\{\sum_i|Q_i|\lf[|Q_i|^{-\alpha}\lf\{\fint_{Q_i}\lf|f-P_{Q_i}^{(s)}(f)\r|^q\r\}
   ^{\frac 1q}\r]^p\r\}^{\frac 1p},$$
$P_{Q_i}^{(s)}(f)$ for any $i$ is as in \eqref{P_Q^s(f)} with $Q$ replaced by $Q_i$,
and the supremum is taken over all collections of pairwise disjoint cubes
$\{Q_i\}_i$ in $\mathcal{X}$.

The \emph{dual space $(JN_{(p,q,s)_\alpha}(\mathcal{X}))^\ast$} of $JN_{(p,q,s)_\alpha}(\mathcal{X})$
  is defined to be the set of all continuous linear functionals on $JN_{(p,q,s)_\alpha}(\mathcal{X})$
  equipped with weak-$\ast$ topology.
\end{definition}

\begin{remark}\label{relation}
Let $\mathcal{X}=Q_0$ and $\az=0=s$. In this case, it is obvious that $JN_{(p,q,s)_\alpha}(\mathcal{X})=JN_{p,q}(\mathcal{X})$ with equivalent norms.
Moreover, by \cite[Proposition 5.1]{DHKY18},
we know that, when $q\in [1,p)$, then $JN_{(p,q,s)_\alpha}(\mathcal{X})$
and $JN_p(\mathcal{X})$ coincide with equivalent norms and, when $q\in [p,\infty)$,
$JN_{(p,q,s)_\alpha}(\mathcal{X})$ and $L^q(\mathcal{X})$ coincide as sets.
\end{remark}

In what follows, for any $q\in(1,\fz)$ and any measurable function $f$, let
$$\|f\|_{L^q(Q_0,|Q_0|^{-1}dx)}:=\lf[\fint_{Q_0}|f(x)|^q\,dx\r]^\frac1q.$$
For any $q\in(1,\fz)$ and $s\in\zz_+$, the space
$L^q(Q_0,|Q_0|^{-1}dx)/\mathcal{P}_s(Q_0)$ is defined by setting
$$L^q(Q_0,|Q_0|^{-1}dx)/\mathcal{P}_s(Q_0)
:=\lf\{f\in L^q(Q_0):\,\,\|f\|_{L^q(Q_0,|Q_0|^{-1}dx)/\mathcal{P}_s(Q_0)}<\fz \r\},$$
where $\|f\|_{L^q(Q_0,|Q_0|^{-1}dx)/\mathcal{P}_s(Q_0)}
:=\inf_{m\in\mathcal{P}_s(Q_0)}\|f+m\|_{L^q(Q_0,|Q_0|^{-1}dx)}$
and $\mathcal{P}_s(Q_0)$ denotes the set of all polynomials of degree not greater
than $s$ on $Q_0$. In what follows, for a given positive constant $A$ and
a given function space $(\mathbb{X},\|\cdot\|_{\mathbb{X}})$,
we write $A\mathbb{X}:=\{Af:\,\,f\in\mathbb{X}\}$.
We have the following fine generalization of the fact $JN_{(p,q,0)_0}(Q_0)=L^q(Q_0)$
for any given $p\in(1,\fz)$ and $q\in[p,\fz)$ in \cite[Proposition 5.1]{DHKY18}.

\begin{proposition}\label{JNpq=Lq}
Let $p\in(1,\fz)$, $q\in[p,\fz)$, $s\in\zz_+$, $\alpha=0$ and a closed cube $Q_0\subsetneqq\rn$.
Then
$$|Q_0|^{-\frac1p} JN_{(p,q,s)_\alpha}(Q_0)
=L^q(Q_0,|Q_0|^{-1}dx)/\mathcal{P}_s(Q_0)$$
with equivalent norms.
\end{proposition}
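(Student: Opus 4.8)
The plan is to establish the claimed set equality together with the two-sided norm estimate
$$\|f\|_{L^q(Q_0,|Q_0|^{-1}dx)/\mathcal{P}_s(Q_0)}\le|Q_0|^{-\frac1p}\|f\|_{JN_{(p,q,s)_0}(Q_0)}\le C\|f\|_{L^q(Q_0,|Q_0|^{-1}dx)/\mathcal{P}_s(Q_0)},$$
with $C$ independent of $f$ and $Q_0$. Before anything else I would record one fact, used repeatedly: since $P_Q^{(s)}$ is linear and fixes $\mathcal{P}_s(Q_0)$ (both consequences of the uniqueness in \eqref{P_Q^s(f)}), for any $m\in\mathcal{P}_s(Q_0)$ and any cube $Q\subset Q_0$ one has $P_Q^{(s)}(f+m)=P_Q^{(s)}(f)+m$ on $Q$, hence $f-P_Q^{(s)}(f)=(f+m)-P_Q^{(s)}(f+m)$, and therefore $\|f+m\|_{JN_{(p,q,s)_0}(Q_0)}=\|f\|_{JN_{(p,q,s)_0}(Q_0)}$.

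For the left-hand inequality (which also gives $JN_{(p,q,s)_0}(Q_0)\subset L^q(Q_0)$) I would simply test the definition of $\|\cdot\|_{JN_{(p,q,s)_0}(Q_0)}$ against the single-cube family $\{Q_0\}$; this yields $|Q_0|^{\frac1p}[\fint_{Q_0}|f-P_{Q_0}^{(s)}(f)|^q]^{\frac1q}\le\|f\|_{JN_{(p,q,s)_0}(Q_0)}$, and, since $-P_{Q_0}^{(s)}(f)\in\mathcal{P}_s(Q_0)$, the left-hand side is no smaller than $|Q_0|^{\frac1p}\|f\|_{L^q(Q_0,|Q_0|^{-1}dx)/\mathcal{P}_s(Q_0)}$; in particular $f-P_{Q_0}^{(s)}(f)\in L^q(Q_0)$, so $f\in L^q(Q_0)$.

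The substantive direction is the right-hand inequality. By the polynomial-invariance fact above, it suffices to prove that $\|f\|_{JN_{(p,q,s)_0}(Q_0)}\le C|Q_0|^{\frac1p}[\fint_{Q_0}|f|^q]^{\frac1q}$ for every $f\in L^q(Q_0)$, and then replace $f$ by $f+m$ and pass to the infimum over $m\in\mathcal{P}_s(Q_0)$. So fix a pairwise disjoint family $\{Q_i\}_i$ of cubes in $Q_0$. First, \eqref{C_{(s)}}, Jensen's inequality, and the triangle inequality in $L^q(Q_i)$ together give, for each $i$, the bound $[\fint_{Q_i}|f-P_{Q_i}^{(s)}(f)|^q]^{\frac1q}\le(1+C_{(s)})[\fint_{Q_i}|f|^q]^{\frac1q}$, so that
$$\sum_i|Q_i|\lf[\fint_{Q_i}|f-P_{Q_i}^{(s)}(f)|^q\r]^{\frac pq}\le(1+C_{(s)})^p\sum_i|Q_i|^{1-\frac pq}\lf(\int_{Q_i}|f|^q\r)^{\frac pq}.$$
Because $q\ge p$, the exponent $1-\frac pq$ is nonnegative, and Hölder's inequality with exponents $\frac q{q-p}$ and $\frac qp$ (the bound being immediate when $q=p$) dominates the last sum by $(\sum_i|Q_i|)^{1-\frac pq}(\sum_i\int_{Q_i}|f|^q)^{\frac pq}$. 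Since $\{Q_i\}_i$ is pairwise disjoint in $Q_0$, one has $\sum_i|Q_i|\le|Q_0|$ and $\sum_i\int_{Q_i}|f|^q\le\int_{Q_0}|f|^q$; substituting, taking the supremum over all such families $\{Q_i\}_i$, and extracting $p$-th roots gives $\|f\|_{JN_{(p,q,s)_0}(Q_0)}\le(1+C_{(s)})|Q_0|^{\frac1p-\frac1q}\|f\|_{L^q(Q_0)}=(1+C_{(s)})|Q_0|^{\frac1p}[\fint_{Q_0}|f|^q]^{\frac1q}$.

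Most of this is routine bookkeeping; the one genuinely load-bearing step — and thus the main obstacle — is the Hölder estimate, where the hypothesis $q\ge p$ is precisely what lets the purely geometric bound $\sum_i|Q_i|\le|Q_0|$ be traded against the analytic bound $\sum_i\int_{Q_i}|f|^q\le\int_{Q_0}|f|^q$; when $q<p$ this trade fails, which is exactly why $JN_{(p,q,s)_0}(Q_0)$ is strictly larger than $L^q(Q_0)$ in that regime. Chaining the two displayed inequalities finally yields both the equality of the underlying sets and the asserted equivalence of norms.
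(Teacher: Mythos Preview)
Your proof is correct and follows essentially the same approach as the paper's: both directions are handled identically (single-cube test for $JN\hookrightarrow L^q/\mathcal{P}_s$; the pointwise bound \eqref{C_{(s)}} plus a convexity estimate for the reverse), the only cosmetic difference being that you invoke H\"older with exponents $\frac{q}{q-p},\frac{q}{p}$ where the paper phrases the same step as Jensen's inequality applied to the concave function $t\mapsto t^{p/q}$ with weights $|Q_i|/|Q_0|$. Your version yields the slightly cleaner constant $1+C_{(s)}$ because you use the triangle inequality in $L^q$ rather than the paper's $(a+b)^q\le 2^{q-1}(a^q+b^q)$.
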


We point out that it is a very interesting open question to find a counterpart of
Proposition \ref{JNpq=Lq} when $\alpha\in(0,\fz)$;
see Remark \ref{jnq1}(ii) below for more details.

The following proposition indicates that Campanato spaces server as the limit case of
John--Nirenberg--Campanato spaces as $p\to\infty$.
\begin{proposition}\label{Campanato&JNpqas}
  Let $\alpha\in[0,\infty)$, $q\in[1,\infty)$ and $s\in\zz_+$.
  Then, for any $f\in\cup_{r\in[1,\infty)}\cap_{p\in[r,\infty)}
  JN_{(p,q,s)_\alpha}(\mathcal{X})$, $f\in\mathcal{C}_{\alpha,q,s}(\mathcal{X})$ and
  $$\|f\|_{\mathcal{C}_{\alpha,q,s}(\mathcal{X})}=
  \lim_{p\to\infty}\|f\|_{JN_{(p,q,s)_\alpha}(\mathcal{X})}.$$
\end{proposition}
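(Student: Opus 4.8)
\emph{Proof proposal.}\quad
The plan is to deduce the identity from two complementary one-sided estimates between the two norms. For a closed cube $Q\subseteq\mathcal{X}$, abbreviate
$$a_Q:=|Q|^{-\alpha}\lf[\fint_Q\lf|f-P_Q^{(s)}(f)\r|^q\r]^{1/q},$$
so that $\|f\|_{\mathcal{C}_{\alpha,q,s}(\mathcal{X})}=\sup_Q a_Q$ (supremum over closed cubes $Q\subseteq\mathcal{X}$) and $\|f\|_{JN_{(p,q,s)_\alpha}(\mathcal{X})}=\sup\{\sum_i|Q_i|a_{Q_i}^p\}^{1/p}$ (supremum over all families $\{Q_i\}_i$ of pairwise disjoint closed cubes in $\mathcal{X}$); note that each individual $a_Q$ is finite because $f\in L^1_{\loc}(\mathcal{X})$ and $P_Q^{(s)}(f)$ is pointwise bounded by a constant multiple of $\fint_Q|f|$.

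First I would prove $\liminf_{p\to\infty}\|f\|_{JN_{(p,q,s)_\alpha}(\mathcal{X})}\ge\|f\|_{\mathcal{C}_{\alpha,q,s}(\mathcal{X})}$. Fixing any cube $Q\subseteq\mathcal{X}$ and testing $\|f\|_{JN_{(p,q,s)_\alpha}(\mathcal{X})}$ against the single-element family $\{Q\}$ gives $\|f\|_{JN_{(p,q,s)_\alpha}(\mathcal{X})}\ge|Q|^{1/p}a_Q$ for every admissible $p$; since $0<|Q|<\infty$ (this is exactly where it matters that every cube inside $\mathcal{X}$ is bounded and non-degenerate, irrespective of whether $\mathcal{X}=\rn$ or $\mathcal{X}=Q_0$), we have $|Q|^{1/p}\to1$ as $p\to\infty$, whence $\liminf_{p\to\infty}\|f\|_{JN_{(p,q,s)_\alpha}(\mathcal{X})}\ge a_Q$; taking the supremum over all such $Q$ finishes this half.

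For the matching upper bound I would use the hypothesis that $f\in JN_{(p,q,s)_\alpha}(\mathcal{X})$ for all $p$ in some $[r,\infty)$. Fix one exponent $p_0\ge r$ and put $A_0:=\|f\|_{JN_{(p_0,q,s)_\alpha}(\mathcal{X})}<\infty$. For $p>p_0$ and any pairwise disjoint family $\{Q_i\}_i$ in $\mathcal{X}$, splitting the power and using $a_{Q_j}\le\|f\|_{\mathcal{C}_{\alpha,q,s}(\mathcal{X})}$ gives
$$\sum_i|Q_i|a_{Q_i}^p=\sum_i|Q_i|a_{Q_i}^{p-p_0}a_{Q_i}^{p_0}
\le\lf(\sup_j a_{Q_j}\r)^{p-p_0}\sum_i|Q_i|a_{Q_i}^{p_0}
\le\|f\|_{\mathcal{C}_{\alpha,q,s}(\mathcal{X})}^{\,p-p_0}\,A_0^{\,p_0},$$
and, taking the supremum over such families followed by $p$-th roots, the interpolation-type inequality
$$\|f\|_{JN_{(p,q,s)_\alpha}(\mathcal{X})}\le\|f\|_{\mathcal{C}_{\alpha,q,s}(\mathcal{X})}^{\,1-p_0/p}\,A_0^{\,p_0/p}.$$
If $\|f\|_{\mathcal{C}_{\alpha,q,s}(\mathcal{X})}\in(0,\infty)$, letting $p\to\infty$ here (so $A_0^{p_0/p}\to1$ and $\|f\|_{\mathcal{C}_{\alpha,q,s}(\mathcal{X})}^{-p_0/p}\to1$) yields $\limsup_{p\to\infty}\|f\|_{JN_{(p,q,s)_\alpha}(\mathcal{X})}\le\|f\|_{\mathcal{C}_{\alpha,q,s}(\mathcal{X})}$; the case $\|f\|_{\mathcal{C}_{\alpha,q,s}(\mathcal{X})}=0$ is immediate since then $f$ coincides on every cube with a polynomial of degree at most $s$ and both norms vanish. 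Together with the first half this gives that $\lim_{p\to\infty}\|f\|_{JN_{(p,q,s)_\alpha}(\mathcal{X})}$ exists and equals $\|f\|_{\mathcal{C}_{\alpha,q,s}(\mathcal{X})}$, and hence $f\in\mathcal{C}_{\alpha,q,s}(\mathcal{X})$.

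The step I expect to be the main obstacle is closing this last point rigorously, i.e.\ certifying that the common value is finite: the two one-sided estimates alone only identify $\lim_{p\to\infty}\|f\|_{JN_{(p,q,s)_\alpha}(\mathcal{X})}$ with $\sup_Q a_Q\in[0,\infty]$, and the infinite case has to be excluded using that $f$ lies in $JN_{(p,q,s)_\alpha}(\mathcal{X})$ for the \emph{entire} range $p\ge r$ at once, rather than at a single exponent. Here the log-convexity in $p$ of $p\mapsto\sum_i|Q_i|a_{Q_i}^p$, and hence of $p\mapsto\|f\|_{JN_{(p,q,s)_\alpha}(\mathcal{X})}^p$, already guarantees that the limit exists in $[0,\infty]$, after which the finiteness must be secured by a dedicated argument (e.g.\ combining the interpolation inequality across exponents with the John--Nirenberg type inequality for these spaces). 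The remaining ingredients (local integrability of $f$, measurability, and reduction to countable suprema in the definitions of both norms) are routine.
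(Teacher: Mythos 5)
Your proof matches the paper's approach essentially step for step: the lower bound via a single test cube is the same (the paper embeds $\widetilde{Q}$ in an arbitrary disjoint family and then discards all but the $\widetilde{Q}$-term, which is equivalent to your singleton family), and the upper bound is the same interpolation $a_{Q_i}^p=a_{Q_i}^{p-p_0}a_{Q_i}^{p_0}\le\|f\|_{\mathcal{C}_{\alpha,q,s}(\mathcal{X})}^{p-p_0}a_{Q_i}^{p_0}$, carried out in the paper with $p_0:=r_0$ after the normalization $\|f\|_{\mathcal{C}_{\alpha,q,s}(\mathcal{X})}=1$.

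The reservation you voice at the end is well founded, and you should know that the paper does not resolve it either: the normalization in the paper's proof tacitly presupposes $\|f\|_{\mathcal{C}_{\alpha,q,s}(\mathcal{X})}<\infty$. Both arguments therefore establish only the identity $\lim_{p\to\infty}\|f\|_{JN_{(p,q,s)_\alpha}(\mathcal{X})}=\|f\|_{\mathcal{C}_{\alpha,q,s}(\mathcal{X})}$ as an equality in $[0,\infty]$; the membership $f\in\mathcal{C}_{\alpha,q,s}(\mathcal{X})$ cannot be deduced from the stated hypothesis alone, and the extra work you anticipate (via Theorem \ref{JohnNirenberg} or otherwise) cannot close the gap, because the gap reflects a defect in the statement rather than in the proof. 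Concretely, take $\mathcal{X}=[0,1]$, $n=1$, $\alpha=0$, $q=1$, $s=0$ and $f(x)=\log^2(1/x)$: then $f\in L^p([0,1])\subset JN_p([0,1])=JN_{(p,1,0)_0}([0,1])$ for every $p\in(1,\infty)$, yet $f\notin\BMO([0,1])=\mathcal{C}_{0,1,0}([0,1])$. That the authors are aware of the need for an extra condition is visible in Remark \ref{finite cube} and Corollary \ref{p=8}, both of which reintroduce the hypothesis $\lim_{p\to\infty}\|f\|_{JN_{(p,q,s)_\alpha}(Q_0)}<\infty$ when characterizing $\BMO(Q_0)$ and $\mathcal{C}_{\alpha,q,s}(Q_0)$; that hypothesis would be vacuous if Proposition \ref{Campanato&JNpqas} held literally as stated. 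In short: your argument for the two one-sided bounds is exactly the paper's, your log-convexity remark correctly gives existence of the limit in $[0,\infty]$, and the finiteness you flag genuinely fails in general, so the conclusion ``$f\in\mathcal{C}_{\alpha,q,s}(\mathcal{X})$'' needs the additional hypothesis that the limit be finite.
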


\begin{remark}\label{finite cube}
  Let $Q_0$ be a closed cube in $\rn$ and $Q_0\subsetneqq\rn$. It is well known that
  $$L^\infty(Q_0)=\lf\{ f\in\bigcap_{p\in[1,\infty)}L^p(Q_0):\,\,
      \lim_{p\to\infty}\|f\|_{L^p(Q_0)}<\infty\r\}$$
  and, for any $f\in L^\fz(Q_0)$,
  $$\|f\|_{L^\fz(Q_0)}=\lim_{p\to\fz}\|f\|_{L^p(Q_0)}.$$
  As was showed in \cite[p.\,7]{BB18}, we also have
  $$\BMO(Q_0)=\lf\{ f\in\bigcap_{p\in[1,\infty)}JN_p(Q_0):\,\,
      \lim_{p\to\infty}\|f\|_{JN_p(Q_0)}<\infty\r\}$$
  and, for any $f\in\BMO(Q_0)$,
  $$\|f\|_{\BMO(Q_0)}=\lim_{p\to\fz}\|f\|_{JN_p(Q_0)}.$$
\end{remark}

Proposition \ref{Campanato&JNpqas} then gives the following fine
generalization of the above results.

\begin{corollary}\label{p=8}
  Let $q\in[1,\infty)$, $\alpha\in[0,\infty)$, $s\in\zz_+$ and a closed cube $Q_0\subsetneqq\rn$.
  Then
  $$\mathcal{C}_{\alpha,q,s}(Q_0)=
    \lf\{ f\in\bigcap_{p\in(1,\infty)}JN_{(p,q,s)_\alpha}(Q_0):\,\,
      \lim_{p\to\infty}\|f\|_{JN_{(p,q,s)_\alpha}(Q_0)}<\infty\r\}$$
  and, for any $f\in \mathcal{C}_{\alpha,q,s}(Q_0)$,
  $$\|f\|_{\mathcal{C}_{\alpha,q,s}(Q_0)}=\lim_{p\to\fz}\|f\|_{JN_{(p,q,s)_\alpha}(Q_0)}.$$
\end{corollary}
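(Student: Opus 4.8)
The plan is to derive Corollary \ref{p=8} directly from Proposition \ref{Campanato&JNpqas} together with Remark \ref{finite cube}, so that the only real work is verifying that on a finite cube $Q_0$ the awkward index set $\cup_{r\in[1,\infty)}\cap_{p\in[r,\infty)}JN_{(p,q,s)_\alpha}(Q_0)$ in Proposition \ref{Campanato&JNpqas} can be replaced by the cleaner $\cap_{p\in(1,\infty)}JN_{(p,q,s)_\alpha}(Q_0)$, and that the condition $\lim_{p\to\infty}\|f\|_{JN_{(p,q,s)_\alpha}(Q_0)}<\infty$ precisely characterizes membership in $\mathcal{C}_{\alpha,q,s}(Q_0)$.

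First I would establish monotonicity of the $JN$-norms in $p$ on a finite cube. Fix a finite collection of pairwise disjoint cubes $\{Q_i\}_i$ in $Q_0$ and set $a_i:=|Q_i|^{1/p}\,|Q_i|^{-\alpha}(\fint_{Q_i}|f-P_{Q_i}^{(s)}(f)|^q)^{1/q}$... more precisely, writing $b_i:=|Q_i|^{-\alpha}(\fint_{Q_i}|f-P_{Q_i}^{(s)}(f)|^q)^{1/q}$, the quantity to be supremized is $(\sum_i |Q_i|\,b_i^p)^{1/p}$. Since $\sum_i|Q_i|\le|Q_0|<\infty$, the measures $|Q_i|$ define a finite measure, and $p\mapsto(\sum_i|Q_i|b_i^p)^{1/p}$ is the $L^p$-norm of the function taking value $b_i$ on an atom of mass $|Q_i|$ with respect to this finite measure; hence, after normalizing by $|Q_0|^{1/p}$, it is nondecreasing in $p$ by the standard inclusion $L^p\hookrightarrow L^{p'}$ for $p\le p'$ on a probability space. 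Taking the supremum over all such collections preserves monotonicity, so $p\mapsto|Q_0|^{-1/p}\|f\|_{JN_{(p,q,s)_\alpha}(Q_0)}$ is nondecreasing, and since $|Q_0|^{-1/p}\to1$ as $p\to\infty$, the limit $\lim_{p\to\infty}\|f\|_{JN_{(p,q,s)_\alpha}(Q_0)}$ exists in $[0,\infty]$ and equals $\sup_{p\in(1,\infty)}|Q_0|^{-1/p}\|f\|_{JN_{(p,q,s)_\alpha}(Q_0)}$ (one must be slightly careful with the $|Q_0|^{-1/p}$ factor, but since it converges to $1$ monotonically from above or below depending on whether $|Q_0|\gtrless1$, the limit and monotone-limit statements still match up).

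Next, the monotonicity shows that if $f\in JN_{(p_1,q,s)_\alpha}(Q_0)$ and $\lim_{p\to\infty}\|f\|_{JN_{(p,q,s)_\alpha}(Q_0)}<\infty$, then $f\in JN_{(p,q,s)_\alpha}(Q_0)$ for every $p\in(1,\infty)$; conversely, if $f\in\cap_{p\in(1,\infty)}JN_{(p,q,s)_\alpha}(Q_0)$ with finite limit, then $f\in\cup_{r\in[1,\infty)}\cap_{p\in[r,\infty)}JN_{(p,q,s)_\alpha}(Q_0)$ trivially (take $r=2$, say). Thus the set on the right-hand side of the claimed identity is contained in $\cup_{r}\cap_{p\ge r}JN_{(p,q,s)_\alpha}(Q_0)$, and Proposition \ref{Campanato&JNpqas} applies to give $f\in\mathcal{C}_{\alpha,q,s}(Q_0)$ and $\|f\|_{\mathcal{C}_{\alpha,q,s}(Q_0)}=\lim_{p\to\infty}\|f\|_{JN_{(p,q,s)_\alpha}(Q_0)}$; this proves the $\subset$ direction of the set identity together with the norm formula. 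For the reverse inclusion $\supset$, I would take $f\in\mathcal{C}_{\alpha,q,s}(Q_0)$ and show directly that $f\in JN_{(p,q,s)_\alpha}(Q_0)$ with $\|f\|_{JN_{(p,q,s)_\alpha}(Q_0)}\le|Q_0|^{1/p}\|f\|_{\mathcal{C}_{\alpha,q,s}(Q_0)}$: indeed, for any disjoint $\{Q_i\}_i$ in $Q_0$, each $b_i\le\|f\|_{\mathcal{C}_{\alpha,q,s}(Q_0)}$ by definition of the Campanato norm, so $\sum_i|Q_i|b_i^p\le\|f\|_{\mathcal{C}_{\alpha,q,s}(Q_0)}^p\sum_i|Q_i|\le|Q_0|\,\|f\|_{\mathcal{C}_{\alpha,q,s}(Q_0)}^p$. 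This gives $f\in\cap_{p\in(1,\infty)}JN_{(p,q,s)_\alpha}(Q_0)$ and, letting $p\to\infty$, $\lim_{p\to\infty}\|f\|_{JN_{(p,q,s)_\alpha}(Q_0)}\le\|f\|_{\mathcal{C}_{\alpha,q,s}(Q_0)}<\infty$, so $f$ lies in the right-hand set.

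The main obstacle, such as it is, is bookkeeping with the normalizing factor $|Q_0|^{-1/p}$: the clean monotonicity statement holds for $|Q_0|^{-1/p}\|f\|_{JN_{(p,q,s)_\alpha}(Q_0)}$, not for $\|f\|_{JN_{(p,q,s)_\alpha}(Q_0)}$ itself, so one needs the elementary observation that multiplying a monotone sequence by a factor tending to $1$ does not affect existence of the limit nor its value — and that the two-sided bound $\min\{1,|Q_0|^{-1/p}\}\|f\|_{JN_{(p,q,s)_\alpha}(Q_0)}\le|Q_0|^{-1/p}\|f\|_{JN_{(p,q,s)_\alpha}(Q_0)}\le\max\{1,|Q_0|^{-1/p}\}\|f\|_{JN_{(p,q,s)_\alpha}(Q_0)}$ squeezes everything correctly as $p\to\infty$. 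Once this is in place, Corollary \ref{p=8} follows by combining the two inclusions above with the norm identity already delivered by Proposition \ref{Campanato&JNpqas}.
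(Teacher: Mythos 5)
Your proposal is correct and follows essentially the same route as the paper: the substance is the one-line estimate $\|f\|_{JN_{(p,q,s)_\alpha}(Q_0)}^p\le|Q_0|\,\|f\|_{\mathcal{C}_{\alpha,q,s}(Q_0)}^p$, giving $\mathcal{C}_{\alpha,q,s}(Q_0)\subset\bigcap_{p\in(1,\infty)}JN_{(p,q,s)_\alpha}(Q_0)$, after which Proposition \ref{Campanato&JNpqas} finishes both directions. Your monotonicity digression for $p\mapsto|Q_0|^{-1/p}\|f\|_{JN_{(p,q,s)_\alpha}(Q_0)}$ is a valid observation but is not needed here, since the inclusion $\bigcap_{p\in(1,\infty)}JN_{(p,q,s)_\alpha}(Q_0)\subset\bigcup_{r\in[1,\infty)}\bigcap_{p\in[r,\infty)}JN_{(p,q,s)_\alpha}(Q_0)$ is already trivial and the existence of $\lim_{p\to\infty}\|f\|_{JN_{(p,q,s)_\alpha}(Q_0)}$ is exactly what Proposition \ref{Campanato&JNpqas} supplies.
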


Recall that the preduals of Campanato spaces are Hardy spaces \cite[p.\,55, Theorem 4.1]{Lu95}.
We show that the preduals of John--Nirenberg--Campanato spaces are some Hardy-type spaces.

For any $v\in[1,\infty]$, $s\in\zz_+$ and measurable subset $E\subset\rn$, let
\begin{align*}
L_s^v(E)&:=\lf\{f\in L^v(E):\,\,\ \supp(f)\subset E\mathrm{\,\,and\,\,}
  \int_E f(x)x^\gamma\,dx=0,\quad\forall\,|\gamma|\le s\r\},
\end{align*}
here and hereafter, $\supp(f):=\{x\in\rn:\,\,f(x)\neq0\}$.

\begin{definition}\label{uvas-atom}
Let $u\in(1,\infty)$, $v\in(1,\infty]$, $s\in\zz_+$ and $\alpha\in[0,\infty)$.
A function $a$ is called a \emph{$(u,v,s)_\alpha$-atom} on cube $Q$ if
\begin{enumerate}
  \item[{\rm(i)}] $\supp (a):=\{x\in\rn:\,\,a(x)\neq0\}\subset Q$;
  \item[{\rm(ii)}] $\|a\|_{L^v(Q)}\le |Q|^{\frac1v-\frac1u-\alpha}$,
    where, when $v=\fz$, $\|a\|_{L^v(Q)}$ represents the essential supremum of $a$ on $Q$;
  \item[{\rm(iii)}] $\int_Q a(x)x^\gamma\,dx=0$, $\,\,\forall\,\gamma\in\zz_+^n$
    and $|\gamma|\le s$.
\end{enumerate}
\end{definition}

\begin{remark}\label{classical atom}
\begin{itemize}
\item[(i)] Observe that, when $\az=0$, a $(u,v,s)_\alpha$-atom is just a classical \emph{$(u,v,s)$-atom}
(see, for instance, \cite{Stein93,Lu95,TaiblesonWeiss80}).

\item[(ii)] Let $p\in (0,1]$, $q\in [1,\fz]\cap(p,\fz]$ and $s\in\zz_+$. Following Coifman and Weiss \cite{CW77},
we define the \emph{atomic Hardy space $H^{p,q,s}_\at(\cx)$} to be the set of all
$f\in (\mathcal{C}_{\frac1p-1,q',s}(\mathcal{X}))^\ast$ when $p\in (0,1)$ or all $f\in L^1(\cx)$
when $p=1$ such that $f=\sum_{i\in\nn}\lz_ia_i$ in
$(\mathcal{C}_{\frac1p-1,q',s}(\mathcal{X}))^\ast$ when $p\in (0,1)$ or in $L^1(\cx)$
when $p=1$, where $\{a_i\}_{i\in\nn}$ are $(p,q,s)_0$-atoms and $\{\lz_i\}_{i\in\nn}\subset\cc$
satisfies $\sum_{i\in\nn}|\lz_i|^p<\fz$. Moreover, for any $f\in H^{p,q,s}_\at(\cx)$, let
$$\|f\|_{H^{p,q,s}_\at(\cx)}:=\inf\lf(\sum_{i\in\nn}|\lz_i|^p\r)^{\frac1p},$$
where the infimum is taking over all decompositions of $f$ as above.
\end{itemize}
\end{remark}

\begin{definition}\label{uvas-polymer}
Let $u\in(1,\infty)$, $v\in(1,\infty]$, $s\in\zz_+$ and $\alpha\in[0,\infty)$.
The \emph{space of $(u,v,s)_\alpha$-polymers}, $\widetilde{HK}_{(u,v,s)_\alpha}(\mathcal{X})$,
is defined by setting
\begin{align*}
\widetilde{HK}_{(u,v,s)_\alpha}(\mathcal{X})
:=&\lf\{g:\,\,
g=\sum_j \lambda_j a_j \,\,\mathrm{pointwise},\,\,
\{a_j\}_j \,\,\mathrm{are}\,\,(u,v,s)_\alpha\mathrm{\emph{-}atoms}\,\,\mathrm{on}\r.\\
&\quad\quad\quad\lf.\,\,\mathrm{disjoint\,\,cubes}\,\,\{Q_j\}_j\subset\mathcal{X}\,\,
\mathrm{and\,\,}\sum_j \lf|\lambda_j\r|^u<\infty\r\}
\end{align*}
and, for any $g\in \widetilde{HK}_{(u,v,s)_\alpha}(\mathcal{X})$, let
$$\|g\|_{\widetilde{HK}_{(u,v,s)_\alpha}(\mathcal{X})}
:=\inf\lf(\sum_j \lf|\lambda_j\r|^u\r)^\frac1u,$$
where the infimum is taken over all decompositions of $g$ as above.

Moreover, any $g\in \widetilde{HK}_{(u,v,s)_\alpha}(\mathcal{X})$ is called a
\emph{$(u,v,s)_\alpha$-polymer}.
\end{definition}

\begin{remark}\label{polymerJFA}
\begin{itemize}
\item[(i)] Recall that, when $\mathcal{X}=Q_0$, for any $1<u<v\le\infty$,
Dafni et al. \cite[Definition 6.1]{DHKY18} introduced
$(u,v)$-polymers as follows.
A function $g$ is called a $(u,v)$-\emph{polymer} if $g=\sum_j a_j$ pointwise,
where $\{Q_j\}_j$ is a collection of disjoint cubes in $Q_0$ and, for any $j$,
$a_j\in L_0^v(Q_j)$ and
$$\lf\|\lf\{a_j\r\}_j\r\|_{(u,v)}:=
\lf[\sum_j\lf|Q_j\r|\lf(\fint_{Q_j}\lf|a_j\r|^v\r)^\frac uv\r]^\frac 1u<\infty,$$
with the usual modification made when $v=\infty$. The norm $\|g\|_{P(u,v)}$
is then defined to be the infimum of $\|\{a_j\}_j\|_{(u,v)}$ over all such
representations of $g$ as above.

We now claim that, when $\alpha=0=s$, $(u,v)$-polymer and $(u,v,s)_\alpha$-polymer coincide with equivalent norms. Indeed, for any $(u,v)$-polymer $g$ and $\epsilon\in(0,\fz)$, there exist
$\{a_j\}_j$ as above such that $g=\sum_j a_j$ and
$$\lf\|\lf\{a_j\r\}_j\r\|_{(u,v)}=
\lf[\sum_j\lf|Q_j\r|\lf(\fint_{Q_j}\lf|a_j\r|^v\r)^\frac uv\r]^\frac 1u
\le \|g\|_{P(u,v)}+\epsilon .$$
For any $j$, normalize $a_j$ by setting
$a_j:=\lambda_j\widetilde{a_j}$
with $\lambda_j:=\|a_j\|_{L^v(Q_j)}|Q_j|^{\frac1u-\frac1v}$.
Then $g=\sum_j a_j=\sum_j \lambda_j \widetilde{a_j}$,
$\{\widetilde{a_j}\}_j$ are $(u,v,0)_0$-atoms, and
$$\lf\|\lf\{a_j\r\}_j\r\|_{(u,v)}
=\lf[\sum_j\lf|Q_j\r|\lf(\lf|Q_j\r|^{-\frac1v}\lambda_j\lf|Q_j\r|
    ^{\frac1v-\frac1u}\r)^u\r]^\frac 1u
=\lf( \sum_j\lambda_j^u\r)^\frac1u.$$
This implies that
$$\|g\|_{\widetilde{HK}_{(u,v,0)_0}(Q_0)}
\le\lf( \sum_j\lambda_j^u\r)^\frac1u\le\|g\|_{P(u,v)}+\epsilon$$
and hence $\|g\|_{\widetilde{HK}_{(u,v,0)_0}(Q_0)}\le\|g\|_{P(u,v)}$
by the arbitrariness of $\epsilon\in(0,\fz)$.
On the other hand, for any $(u,v,0)_0$-polymer $g$ and $\epsilon\in(0,\fz)$,
there exist $(u,v,0)_0$-atoms $\{a_j\}_j$ and $\{\lambda_j\}_j\subset\cc$ such that
$g=\sum_j \lambda_j a_j$ and
$$\lf(\sum_j \lf|\lambda_j\r|^u\r)^\frac1u
\le \|g\|_{\widetilde{HK}_{(u,v,0)_0}(Q_0)}+\epsilon.$$
Thus,
\begin{align*}
\|g\|_{P(u,v)}
&\le\lf\|\lf\{\lambda_j a_j\r\}_j\r\|_{(u,v)}
 =\lf[\sum_j\lf|Q_j\r|\lf(\fint_{Q_j}\lf|\lambda_j a_j\r|^v\r)^\frac uv\r]^\frac 1u
 =\lf[\sum_j \lf|\lambda_j\r|^u \lf|Q_j\r|^{1-\frac uv}\lf\|a_j\r\|_{L^v(Q_j)}^u\r]^\frac 1u\\
&\le \lf[\sum_j \lf|\lambda_j\r|^u \lf|Q_j\r|^{1-\frac uv}\lf|Q_j\r|^{(\frac1v-\frac1u)u}\r]^\frac 1u
 =\lf( \sum_j\lf|\lambda_j\r|^u\r)^\frac1u\le\|g\|_{\widetilde{HK}_{(u,v,0)_0}(Q_0)}+\epsilon
\end{align*}
and hence, by the arbitrariness of $\epsilon\in(0,\fz)$ again,
$\|g\|_{P(u,v)}\le\|g\|_{\widetilde{HK}_{(u,v,0)_0}(Q_0)}$.
Therefore, $\|g\|_{\widetilde{HK}_{(u,v,0)_0}(Q_0)}=\|g\|_{P(u,v)}$,
which shows the above claim.

\item[(ii)]Comparing with Dafni et al. \cite[Definition 6.1]{DHKY18}, we prefer
to use $(u,v,s)_\alpha$-atoms to define $(u,v,s)_\alpha$-polymers because,
in this way, it is easy to see the difference and the commonality between
$\widetilde{HK}_{(u,v,s)_\alpha}(\mathcal{X})$ and the classical atomic Hardy space;
see Remarks \ref{classical atom} and \ref{jnq1}(iii), and
also Dafni et al. \cite[Remark 6.3]{DHKY18}.
\end{itemize}
\end{remark}

In what follows, for any $u\in[1,\infty]$, let $u'$ denote its \emph{conjugate index},
namely, $\frac1u+\frac1{u'}=1$.
Proposition \ref{weak-star} below leads us to  define the space
$HK_{(u,v,s)_\alpha}(\mathcal{X})$ via the weak-$\ast$ topology as in \cite{CW77}.

\begin{proposition}\label{weak-star}
  Let $u\in(1,\infty)$, $v\in (1,\infty]$, $s\in\zz_+$ and $\alpha\in[0,\fz)$.
  Then $\widetilde{HK}_{(u,v,s)_\alpha}(\mathcal{X})\subset
  (JN_{(u',v',s)_\alpha}(\mathcal{X}))^\ast$.
\end{proposition}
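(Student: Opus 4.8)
The plan is to show that every $(u,v,s)_\alpha$-polymer $g$ induces a continuous linear functional on $JN_{(u',v',s)_\alpha}(\mathcal{X})$ via the pairing $f\mapsto \int_{\mathcal{X}} fg$, with operator norm controlled by $\|g\|_{\widetilde{HK}_{(u,v,s)_\alpha}(\mathcal{X})}$. First I would fix a decomposition $g=\sum_j\lambda_j a_j$ pointwise, where the $a_j$ are $(u,v,s)_\alpha$-atoms supported on pairwise disjoint cubes $\{Q_j\}_j\subset\mathcal{X}$ with $\sum_j|\lambda_j|^u<\infty$, and I would try to prove that for every $f\in JN_{(u',v',s)_\alpha}(\mathcal{X})$ the series $\sum_j\lambda_j\int_{\mathcal{X}}fa_j$ converges absolutely and
$$\lf|\sum_j\lambda_j\int_{\mathcal{X}}fa_j\r|\le\lf(\sum_j|\lambda_j|^u\r)^{\frac1u}\|f\|_{JN_{(u',v',s)_\alpha}(\mathcal{X})}.$$
Taking the infimum over decompositions of $g$ then gives $\|g\|_{(JN_{(u',v',s)_\alpha}(\mathcal{X}))^\ast}\le\|g\|_{\widetilde{HK}_{(u,v,s)_\alpha}(\mathcal{X})}$.

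The key estimate for a single atom $a_j$ on $Q_j$ uses the vanishing moments: since $\int_{Q_j}a_j(x)x^\gamma\,dx=0$ for all $|\gamma|\le s$, we may subtract any polynomial of degree at most $s$, in particular $P_{Q_j}^{(s)}(f)$, so that
$$\lf|\int_{\mathcal{X}}fa_j\r|=\lf|\int_{Q_j}\lf[f-P_{Q_j}^{(s)}(f)\r]a_j\r|
\le\lf\|f-P_{Q_j}^{(s)}(f)\r\|_{L^{v'}(Q_j)}\|a_j\|_{L^v(Q_j)}$$
by Hölder's inequality (with the obvious modification when $v=\infty$, $v'=1$). Using the atom size bound $\|a_j\|_{L^v(Q_j)}\le|Q_j|^{\frac1v-\frac1u-\alpha}=|Q_j|^{1-\frac1{v'}-\frac1{u'}\cdot\frac{u'}{u}\cdot\ldots}$ — more cleanly, $\frac1v-\frac1u-\alpha=\frac1{u'}\cdot\frac{1}{?}$; I would just write it as $|Q_j|^{\frac1v}\cdot|Q_j|^{-\frac1u-\alpha}$ and rewrite $\lf\|f-P_{Q_j}^{(s)}(f)\r\|_{L^{v'}(Q_j)}=|Q_j|^{\frac1{v'}}[\fint_{Q_j}|f-P_{Q_j}^{(s)}(f)|^{v'}]^{1/v'}$, so that the powers of $|Q_j|$ combine (using $\frac1v+\frac1{v'}=1$) to give
$$\lf|\int_{\mathcal{X}}fa_j\r|\le|Q_j|^{1-\frac1u-\alpha}\lf[\fint_{Q_j}\lf|f-P_{Q_j}^{(s)}(f)\r|^{v'}\r]^{\frac1{v'}}
=|Q_j|^{\frac1{u'}}\lf[|Q_j|^{-\alpha}\lf(\fint_{Q_j}\lf|f-P_{Q_j}^{(s)}(f)\r|^{v'}\r)^{\frac1{v'}}\r].$$
(Here I used $1-\frac1u=\frac1{u'}$.) Summing over $j$, applying Hölder's inequality for sums with exponents $u$ and $u'$, and recognizing that the disjoint cubes $\{Q_j\}_j$ are an admissible family in the definition of $\|f\|_{JN_{(u',v',s)_\alpha}(\mathcal{X})}$, I get
$$\sum_j|\lambda_j|\,\lf|\int_{\mathcal{X}}fa_j\r|
\le\lf(\sum_j|\lambda_j|^u\r)^{\frac1u}\lf\{\sum_j|Q_j|\lf[|Q_j|^{-\alpha}\lf(\fint_{Q_j}\lf|f-P_{Q_j}^{(s)}(f)\r|^{v'}\r)^{\frac1{v'}}\r]^{u'}\r\}^{\frac1{u'}}
\le\lf(\sum_j|\lambda_j|^u\r)^{\frac1u}\|f\|_{JN_{(u',v',s)_\alpha}(\mathcal{X})},$$
which is exactly the bound needed.

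A few technical points remain to be nailed down. One must check that the pointwise identity $g=\sum_j\lambda_j a_j$ actually justifies the termwise integration $\int_{\mathcal{X}}fg=\sum_j\lambda_j\int_{\mathcal{X}}fa_j$; since the atoms have pairwise disjoint supports $Q_j$, on the set $\bigcup_j Q_j$ the sum is locally finite (at most one term is nonzero at each point up to the disjointness) so no dominated-convergence subtlety arises — this is the main place one has to be slightly careful, and the disjointness of the cubes is precisely what makes it trivial. One also must confirm that $fg$ is integrable, which follows from the absolute convergence just established together with $f\in L^1_{\loc}$ and the $L^v$ bound on each atom. Finally, the linearity of $f\mapsto\int_{\mathcal{X}}fg$ is clear, and the displayed norm bound gives both well-definedness (finiteness) and continuity with respect to the norm topology; continuity in the weak-$\ast$ sense used to topologize $(JN_{(u',v',s)_\alpha}(\mathcal{X}))^\ast$ is then automatic. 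I do not expect any serious obstacle here: the proposition is essentially a one-line Hölder argument once the moment cancellation of atoms is invoked, and the only thing to watch is bookkeeping of the exponents of $|Q_j|$ and the harmless interchange of sum and integral.
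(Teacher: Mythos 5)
Your proof is correct and is essentially the same as the paper's: decompose $g=\sum_j\lambda_ja_j$ into atoms on disjoint cubes, invoke the moment cancellation to replace $f$ by $f-P_{Q_j}^{(s)}(f)$ on each cube, then apply H\"older first with exponents $(v,v')$ inside each cube and then with $(u,u')$ across the sum, recognizing the resulting expression as $(\sum_j|\lambda_j|^u)^{1/u}\|f\|_{JN_{(u',v',s)_\alpha}(\mathcal{X})}$. The only difference from the paper's \eqref{2Holder} is cosmetic bookkeeping (you simplify the $|Q_j|$ exponents before the outer H\"older step rather than after), and your observation that termwise integration is harmless because of the disjoint supports is correct and implicit in the paper's argument.
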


\begin{definition}\label{HKuvb}
Let $u\in(1,\infty)$, $v\in (1,\infty]$, $s\in\zz_+$ and $\alpha\in[0,\fz)$.
The \emph{Hardy-type space} $HK_{(u,v,s)_\alpha}(\mathcal{X})$ is defined by setting
\begin{align*}
HK_{(u,v,s)_\alpha}(\mathcal{X}):=&\lf\{g\in(JN_{(u',v',s)_\alpha}(\mathcal{X}))^\ast:\,\,
g=\sum_i g_i \,\,\mathrm{in}\,\, (JN_{(u',v',s)_\alpha}(\mathcal{X}))^\ast,\,\,
\{g_i\}_i\subset\widetilde{HK}_{(u,v,s)_\alpha}(\mathcal{X})\r.\\
&\quad\quad\quad\lf.\mathrm{and\,\,}
\sum_i\lf\|g_i\r\|_{\widetilde{HK}_{(u,v,s)_\alpha}(\mathcal{X})}<\infty\r\}
\end{align*}
and, for any $g\in HK_{(u,v,s)_\alpha}(\mathcal{X})$, let
$$\|g\|_{HK_{(u,v,s)_\alpha}(\mathcal{X})}:=\inf\sum_i
\|g_i\|_{\widetilde{HK}_{(u,v,s)_\alpha}(\mathcal{X})},$$
where the infimum is taken over all decompositions of $g$ as above.

Moreover, the \emph{finite atomic Hardy-type space
$HK_{(u,v,s)_\alpha}^{\mathrm{fin}}(\mathcal{X})$} is defined to be
the set of all finite sum $\sum_{m=1}^M \lambda_{m}a_{m}$,
where $M\in\nn$, $\{\lambda_{m}\}_{m=1}^M\subset\cc$ and
$\{a_{m}\}_{m=1}^M$ are $(u,v,s)_\alpha$-atoms.
\end{definition}

Following Dafni et al. \cite[Definition 6.1]{DHKY18}, in Definition \ref{HKuvb},
we use the symbol $HK_{(u,v,s)_\alpha}(\mathcal{X})$ to denote these Hardy-type
spaces, where $HK$ in \cite{DHKY18} probably means Hardy-kind.
Recall that, for any $u\in(1,\fz)$ and $v\in(u,\fz]$,
Dafni et al. \cite[Definition 6.1]{DHKY18}
introduced the \emph{Hardy-kind space} $HK_{u,v}(Q_0)$ which is defined
to be the space of all $g\in L^u(Q_0)$ such that
$g=\sum_i g_i$, where each $g_i$ is a $(u,v)$-polymer as in Remark \ref{polymerJFA},
and $\sum_i \|g_i\|_{P(u,v)}<\fz$. Besides, they also define
$\|g\|_{HK_{u,v}(Q_0)}$ to be the infimum of
$\sum_i\|g_i\|_{P(u,v)}$ over all such decompositions.
Indeed, $HK_{(u,v,0)_0}(\mathcal{X})$ and $HK_{u,v}(\mathcal{X})$
coincide with equivalent norms, which is a direct consequence of the following proposition.

\begin{proposition}\label{case0}
Let $Q_0$ be a closed cube in $\rn$, $Q_0\subsetneqq\rn$, $u\in(1,\fz)$ and $v\in(u,\fz]$.
If $g=\sum_{i\in\nn} g_i$ in $(JN_{(u',v',0)_0}(Q_0))^\ast$, where
$\{g_i\}_{i\in\nn}\subset\widetilde{HK}_{(u,v,0)_0}(Q_0)$
and $\sum_{i\in\nn} \|g_i\|_{\widetilde{HK}_{(u,v,0)_0}(Q_0)}<\fz$,
then there exists some $\widetilde{g}\in L^u(Q_0)$ such that
$\widetilde{g}=\sum_i g_i$ in $L^u(Q_0)$ and
$g=\widetilde{g}$ in $(JN_{(u',v',0)_0}(Q_0))^\ast$, namely,
for any $f\in JN_{(u',v',0)_0}(Q_0)$,
$$\langle \widetilde{g},f \rangle=\langle g,f \rangle
=\sum_i \langle g_i,f \rangle.$$
\end{proposition}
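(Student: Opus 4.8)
The plan is to recognise $\widetilde g$ as the $L^u(Q_0)$-sum of the polymers $g_i$ and then to check that this $L^u$-function represents the functional $g$ on $JN_{(u',v',0)_0}(Q_0)$.

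First I would show that every $(u,v,0)_0$-polymer $h$ belongs to $L^u(Q_0)$ with $\|h\|_{L^u(Q_0)}\le\|h\|_{\widetilde{HK}_{(u,v,0)_0}(Q_0)}$. Indeed, if $h=\sum_j\lambda_j a_j$ with $(u,v,0)_0$-atoms $a_j$ on pairwise disjoint cubes $Q_j\subset Q_0$ and $\sum_j|\lambda_j|^u<\fz$, then the $a_j$ have pairwise disjoint supports, so $\|h\|_{L^u(Q_0)}^u=\sum_j|\lambda_j|^u\|a_j\|_{L^u(Q_j)}^u$; since $u<v$, H\"older's inequality and Definition \ref{uvas-atom}(ii) (with $\az=0$) give $\|a_j\|_{L^u(Q_j)}\le|Q_j|^{1/u-1/v}\|a_j\|_{L^v(Q_j)}\le1$ (with the usual modification when $v=\fz$), whence $\|h\|_{L^u(Q_0)}\le(\sum_j|\lambda_j|^u)^{1/u}$; taking the infimum over admissible decompositions of $h$ gives the claim. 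Applying this to each $g_i$ and using $\sum_i\|g_i\|_{\widetilde{HK}_{(u,v,0)_0}(Q_0)}<\fz$, the series $\sum_i g_i$ converges absolutely in the Banach space $L^u(Q_0)$; let $\widetilde g\in L^u(Q_0)$ be its sum, so that $\widetilde g=\sum_i g_i$ in $L^u(Q_0)$, and, each $g_i$ being a pointwise sum of mean-zero atoms, $\int_{Q_0}\widetilde g=\sum_i\int_{Q_0}g_i=0$.

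It remains to identify $\widetilde g$ with $g$ in $(JN_{(u',v',0)_0}(Q_0))^\ast$. Fix $f\in JN_{(u',v',0)_0}(Q_0)$. By Remark \ref{relation} ($v>u$ forces $v'<u'$), $JN_{(u',v',0)_0}(Q_0)=JN_{u'}(Q_0)\subset L^{u',\fz}(Q_0)$, so that $f\in L^r(Q_0)$ for every $r<u'$ but, in general, $f\notin L^{u'}(Q_0)$; hence $\widetilde g f$ need not be integrable, and the pairing has to be recovered through the vanishing moments of the atoms and a truncation of $f$. Put $f_k:=\max\{-k,\min\{f,k\}\}\in L^\fz(Q_0)\subset L^{u'}(Q_0)$; then $f_k\to f$ a.e., $|f_k|\le|f|$, and, because $|f_k(x)-f_k(y)|\le|f(x)-f(y)|$ for all $x,y$, Jensen's inequality applied to the double average over a cube $Q$ yields $(\fint_Q|f_k-(f_k)_Q|^{v'})^{1/v'}\le2(\fint_Q|f-f_Q|^{v'})^{1/v'}$; in particular $\|f_k\|_{JN_{(u',v',0)_0}(Q_0)}\le2\|f\|_{JN_{(u',v',0)_0}(Q_0)}$ uniformly in $k$. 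For each $i$ fix a decomposition $g_i=\sum_j\lambda_j^{(i)}a_j^{(i)}$ on cubes $Q_j^{(i)}$ with $(\sum_j|\lambda_j^{(i)}|^u)^{1/u}\le\|g_i\|_{\widetilde{HK}_{(u,v,0)_0}(Q_0)}+2^{-i}$. Using the disjointness of the supports, the boundedness of $f_k$, and $\int_{Q_j^{(i)}}a_j^{(i)}=0$, one gets
$$\int_{Q_0}g_if_k=\sum_j\lambda_j^{(i)}\int_{Q_j^{(i)}}a_j^{(i)}\lf(f_k-(f_k)_{Q_j^{(i)}}\r).$$
Estimating each summand by H\"older's inequality on $Q_j^{(i)}$ (exponents $v,v'$), Definition \ref{uvas-atom}(ii) and the above local bound, the right-hand side is dominated, uniformly in $k$, by $2\sum_j|\lambda_j^{(i)}||Q_j^{(i)}|^{1/u'}(\fint_{Q_j^{(i)}}|f-f_{Q_j^{(i)}}|^{v'})^{1/v'}$, which by H\"older's inequality in $j$ (exponents $u,u'$) is at most $2\|f\|_{JN_{(u',v',0)_0}(Q_0)}(\|g_i\|_{\widetilde{HK}_{(u,v,0)_0}(Q_0)}+2^{-i})<\fz$. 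Hence, by dominated convergence for series, $\int_{Q_0}g_if_k\to\sum_j\lambda_j^{(i)}\int_{Q_j^{(i)}}a_j^{(i)}(f-f_{Q_j^{(i)}})=\langle g_i,f\rangle$ as $k\to\fz$ (the last identity by Proposition \ref{weak-star}), with $|\int_{Q_0}g_if_k|\le2\|f\|_{JN_{(u',v',0)_0}(Q_0)}(\|g_i\|_{\widetilde{HK}_{(u,v,0)_0}(Q_0)}+2^{-i})$ uniformly in $k$. Finally, since $\widetilde g=\sum_i g_i$ in $L^u(Q_0)$ and $f_k\in L^{u'}(Q_0)$, H\"older's inequality gives $\int_{Q_0}\widetilde gf_k=\sum_i\int_{Q_0}g_if_k$ for each $k$; letting $k\to\fz$ and invoking dominated convergence for the series in $i$ (legitimate by the uniform bound above together with $\sum_i\|g_i\|_{\widetilde{HK}_{(u,v,0)_0}(Q_0)}<\fz$) we obtain
$$\lim_{k\to\fz}\int_{Q_0}\widetilde gf_k=\sum_i\langle g_i,f\rangle=\langle g,f\rangle,$$
the last equality being the hypothesis $g=\sum_i g_i$ in $(JN_{(u',v',0)_0}(Q_0))^\ast$. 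Thus $\langle\widetilde g,f\rangle=\langle g,f\rangle=\sum_i\langle g_i,f\rangle$ for every $f\in JN_{(u',v',0)_0}(Q_0)$, which is the desired conclusion.

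The main obstacle is this last step: since $JN_{u'}(Q_0)$ is strictly larger than $L^{u'}(Q_0)$, the naive pairing $\int_{Q_0}\widetilde gf$ is not absolutely convergent, so one is forced to use the cancellation of the atoms and to regularise $f$; the only nontrivial ingredient in the regularisation is the uniform estimate $\|f_k\|_{JN_{(u',v',0)_0}(Q_0)}\lesssim\|f\|_{JN_{(u',v',0)_0}(Q_0)}$, and everything else reduces to H\"older's inequality and dominated convergence.
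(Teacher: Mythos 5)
Your proof is correct, and it follows the same high-level strategy as the paper's (show the partial sums are Cauchy in $L^u(Q_0)$ to produce $\widetilde g$, then identify the dual pairing), but it is executed somewhat differently at both stages and, in the second, more carefully. For the Cauchy step, the paper argues by duality, namely $\|\sum_{i=M}^{M+m}g_i\|_{L^u(Q_0)}=\sup_{\|f\|_{L^{u'}(Q_0)}\le1}|\langle\sum_{i=M}^{M+m}g_i,f\rangle|$ combined with the observation $\|f\|_{JN_{(u',v',0)_0}(Q_0)}\le2\|f\|_{L^{u'}(Q_0)}$ and the estimate \eqref{2Holder}; you instead prove directly that any $(u,v,0)_0$-polymer $h$ satisfies $\|h\|_{L^u(Q_0)}\le\|h\|_{\widetilde{HK}_{(u,v,0)_0}(Q_0)}$ using the disjointness of the atom supports, which is a bit more elementary. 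The real divergence is in the identification step. The paper simply writes $\sum_i\int_{Q_0}g_if=\int_{Q_0}\bigl(\sum_ig_i\bigr)f$ and concludes, but since $f\in JN_{(u',v',0)_0}(Q_0)$ is in general not in $L^{u'}(Q_0)$, the integral $\int_{Q_0}\widetilde gf$ is not absolutely convergent and this interchange is not literally justified as stated. Your truncation $f_k=\max\{-k,\min\{f,k\}\}$, together with the uniform bound $\|f_k\|_{JN_{(u',v',0)_0}(Q_0)}\le2\|f\|_{JN_{(u',v',0)_0}(Q_0)}$ (proved via the double-average Jensen estimate, which is correct), supplies a legitimate regularisation: you pair $\widetilde g$ with the honest $L^{u'}$ functions $f_k$, use the mean-zero property of the atoms to get a bound uniform in $k$, and pass to the limit by dominated convergence in both $j$ and $i$. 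This closes the gap and makes precise in what sense the $L^u$ function $\widetilde g$ acts on $JN_{(u',v',0)_0}(Q_0)$. So your argument is not only valid but is a genuinely more rigorous version of the paper's; the uniform $JN$-norm estimate for the truncations $f_k$ is the extra ingredient that the paper's shorter proof leaves implicit.
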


\begin{remark}\label{predual}
By Proposition \ref{case0}, we know that, for any $u\in(1,\fz)$ and $v\in(u,\fz]$, the spaces
$HK_{(u,v,0)_0}(Q_0)$ and $HK_{u,v}(Q_0)$ coincide with equivalent norms.
\end{remark}

The duality relation reads as follows.

\begin{theorem}\label{duality}
Let $p\in(1,\infty)$, $q\in [1,\fz)$, $s\in\zz_+$ and $\alpha\in[0,\fz)$.
Then $(HK_{(p',q',s)_\alpha}(\mathcal{X}))^\ast=JN_{(p,q,s)_\alpha}(\mathcal{X})$
in the following sense:
\begin{enumerate}
  \item[{\rm(i)}]If $f\in JN_{(p,q,s)_\alpha}(\mathcal{X})$, then $f$ induces a linear
  functional $\mathcal{L}_f$ on $HK_{(p',q',s)_\alpha}(\mathcal{X})$ with
  $$\|\mathcal{L}_f\|_{(HK_{(p',q',s)_\alpha}(\mathcal{X}))^\ast}
    \le C\|f\|_{JN_{(p,q,s)_\alpha}(\mathcal{X})},$$
  where $C$ is a positive constant independent of $f$.
  \item[{\rm(ii)}] If $\mathcal{L}\in(HK_{(p',q',s)_\alpha}(\mathcal{X}))^\ast$,
  then there exists some $f\in JN_{(p,q,s)_\alpha}(\mathcal{X})$ such that
  $$\mathcal{L}(g)=\int_{\mathcal{X}}f(x)g(x)\,dx,\quad\forall\,g\in
   HK_{(p',q',s)_\alpha}^{\mathrm{fin}}(\mathcal{X})$$
  and
  $$\|\mathcal{L}\|_{(HK_{(p',q',s)_\alpha}(\mathcal{X}))^\ast}
  \sim\|f\|_{JN_{(p,q,s)_\alpha}(\mathcal{X})}$$
  with the positive equivalence constants independent of $f$.
\end{enumerate}
\end{theorem}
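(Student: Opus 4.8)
The plan is to run the Coifman--Weiss duality argument, proving the inclusions ``$\supseteq$'' (which is (i)) and ``$\subseteq$'' (which is (ii)) separately; throughout I abbreviate $u:=p'$ and $v:=q'$, so $u\in(1,\fz)$, $v\in(1,\fz]$, and by Proposition \ref{weak-star} every $(u,v,s)_\alpha$-polymer already acts on $JN_{(p,q,s)_\alpha}(\cx)$. Part (i) is the easy direction. For $f\in JN_{(p,q,s)_\alpha}(\cx)$ (testing the $JN$-norm against the one-cube family $\{Q\}$ shows $f-P_Q^{(s)}(f)\in L^q(Q)$, and hence $f\in L^q_{\loc}(\cx)$, for every cube $Q$) and a single $(u,v,s)_\alpha$-atom $a$ on a cube $Q$, Definition \ref{uvas-atom}(iii) lets me subtract $P_Q^{(s)}(f)$, and then H\"older's inequality together with Definition \ref{uvas-atom}(ii) gives
\begin{align*}
\lf|\int_{\cx}fa\r|=\lf|\int_Q\lf[f-P_Q^{(s)}(f)\r]a\r|
&\le\lf[\int_Q\lf|f-P_Q^{(s)}(f)\r|^q\r]^{\f1q}|Q|^{\f1v-\f1u-\alpha}\\
&=|Q|^{\f1p}\lf[|Q|^{-\alpha}\lf\{\fint_Q\lf|f-P_Q^{(s)}(f)\r|^q\r\}^{\f1q}\r].
\end{align*}
Summing this over a polymer $g=\sum_j\lambda_ja_j$ with atoms on pairwise disjoint cubes $\{Q_j\}_j$ --- whose action on $f$, by Proposition \ref{weak-star}, is $\langle g,f\rangle=\sum_j\lambda_j\int_{Q_j}[f-P_{Q_j}^{(s)}(f)]a_j$ --- and applying H\"older's inequality in the exponents $(p',p)$ yields $|\langle g,f\rangle|\le(\sum_j|\lambda_j|^{p'})^{1/p'}\|f\|_{JN_{(p,q,s)_\alpha}(\cx)}$; taking the infimum over representations gives $|\langle g,f\rangle|\le\|g\|_{\widetilde{HK}_{(u,v,s)_\alpha}(\cx)}\|f\|_{JN_{(p,q,s)_\alpha}(\cx)}$ (this is the quantitative content of Proposition \ref{weak-star}). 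For a general $g\in HK_{(u,v,s)_\alpha}(\cx)$ with $g=\sum_ig_i$ as in Definition \ref{HKuvb}, set $\mathcal{L}_f(g):=\sum_i\langle g_i,f\rangle$; this series converges absolutely, and since $\sum_ig_i\to g$ in $(JN_{(p,q,s)_\alpha}(\cx))^\ast$ it equals $\langle g,f\rangle$, hence is independent of the decomposition and obeys $\|\mathcal{L}_f\|_{(HK_{(p',q',s)_\alpha}(\cx))^\ast}\le\|f\|_{JN_{(p,q,s)_\alpha}(\cx)}$. This is (i), indeed with $C=1$.

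For (ii), fix $\mathcal{L}\in(HK_{(p',q',s)_\alpha}(\cx))^\ast$. The pivotal observation is that, for every cube $Q\subset\cx$, any $h\in L_s^{q'}(Q)$ is a constant multiple of a $(p',q',s)_\alpha$-atom, namely $h=\lambda\wz h$ with $\lambda=\|h\|_{L^{q'}(Q)}|Q|^{\f1{p'}+\alpha-\f1{q'}}$, so $L_s^{q'}(Q)$ embeds into $HK_{(p',q',s)_\alpha}(\cx)$ with $\|h\|_{HK_{(p',q',s)_\alpha}(\cx)}\le|Q|^{\f1{p'}+\alpha-\f1{q'}}\|h\|_{L^{q'}(Q)}$. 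Suppose first $q\in(1,\fz)$, so $q'\in(1,\fz)$. Then $\mathcal{L}$ restricts to a bounded functional on the closed subspace $L_s^{q'}(Q)$ of $L^{q'}(Q)$, of norm at most $|Q|^{\f1{p'}+\alpha-\f1{q'}}\|\mathcal{L}\|$; the Hahn--Banach theorem and $(L^{q'}(Q))^\ast=L^q(Q)$ then furnish $f_Q\in L^q(Q)$, which I normalize by $P_Q^{(s)}(f_Q)=0$, such that $\mathcal{L}(h)=\int_Qf_Qh$ for all $h\in L_s^{q'}(Q)$ and $\|f_Q\|_{L^q(Q)}\ls|Q|^{\f1{p'}+\alpha-\f1{q'}}\|\mathcal{L}\|$ (the implicit constant depending only on $C_{(s)}$). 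If $\cx=Q_0$, I set $f:=f_{Q_0}$. If $\cx=\rn$, I exhaust $\rn$ by $Q_k:=[-k,k]^n$: since $L_s^{q'}(Q_k)\subset L_s^{q'}(Q_{k+1})$, the restriction $f_{Q_{k+1}}|_{Q_k}$ and $f_{Q_k}$ induce the same functional on $L_s^{q'}(Q_k)$ and hence differ by an element of $\mathcal{P}_s(Q_k)$; subtracting these polynomial discrepancies successively produces a single $f\in L_{\loc}^1(\rn)$ agreeing with $f_{Q_k}$ modulo $\mathcal{P}_s(Q_k)$ on each $Q_k$. In either case any atom lies in some $L_s^{q'}(Q_k)$ and its moments kill the polynomial corrections, so $\mathcal{L}(g)=\int_{\cx}fg$ for every $g\in HK_{(p',q',s)_\alpha}^{\mathrm{fin}}(\cx)$.

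It remains to estimate $\|f\|_{JN_{(p,q,s)_\alpha}(\cx)}$. Let $\{Q_i\}_{i=1}^N$ be finitely many pairwise disjoint cubes in $\cx$ and put $c_i:=\|f-P_{Q_i}^{(s)}(f)\|_{L^q(Q_i)}$. On each $Q_i$ take the $L^{q'}(Q_i)$-normalized function $\phi_i:=c_i^{-(q-1)}|f-P_{Q_i}^{(s)}(f)|^{q-1}\mathrm{sgn}(f-P_{Q_i}^{(s)}(f))$ (with the modification $\phi_i:=\mathrm{sgn}(f-P_{Q_i}^{(s)}(f))$ when $q=1$), so $\|\phi_i\|_{L^{q'}(Q_i)}=1$ and $\int_{Q_i}(f-P_{Q_i}^{(s)}(f))\phi_i=c_i$; replacing $\phi_i$ by $b_i:=[\phi_i-P_{Q_i}^{(s)}(\phi_i)]\mathbf{1}_{Q_i}\in L_s^{q'}(Q_i)$ preserves $\int_{Q_i}(f-P_{Q_i}^{(s)}(f))b_i=c_i$ (the correction is orthogonal to $f-P_{Q_i}^{(s)}(f)$) while \eqref{C_{(s)}} gives $\|b_i\|_{L^{q'}(Q_i)}\le1+C_{(s)}=:\theta^{-1}$, uniformly in $q\in[1,\fz)$. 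For $\{\lambda_i\}_{i=1}^N\subset[0,\fz)$, $g:=\sum_i\lambda_ib_i$ is a polymer with $\|g\|_{\widetilde{HK}_{(p',q',s)_\alpha}(\cx)}\le\theta^{-1}(\sum_i\lambda_i^{p'}|Q_i|^{(\f1{p'}+\alpha-\f1{q'})p'})^{1/p'}$, so that
$$\theta\sum_i\lambda_ic_i\le\sum_i\lambda_i\mathcal{L}(b_i)=\mathcal{L}(g)\le\theta^{-1}\|\mathcal{L}\|\lf(\sum_i\lambda_i^{p'}|Q_i|^{(\f1{p'}+\alpha-\f1{q'})p'}\r)^{\f1{p'}}.$$
Optimizing over $\{\lambda_i\}$ via the converse of H\"older's inequality, and using the identity $\f1{p'}+\alpha-\f1{q'}=\f1q-\f1p+\alpha$, turns this into $\sum_i|Q_i|^{1-\frac pq-\alpha p}c_i^p\le\theta^{-2p}\|\mathcal{L}\|^p$, that is, $\sum_i|Q_i|[|Q_i|^{-\alpha}\{\fint_{Q_i}|f-P_{Q_i}^{(s)}(f)|^q\}^{1/q}]^p\le\theta^{-2p}\|\mathcal{L}\|^p$; letting the finite family vary gives $\|f\|_{JN_{(p,q,s)_\alpha}(\cx)}\le\theta^{-2}\|\mathcal{L}\|$. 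Combined with (i), this yields the asserted norm equivalence.

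The hard part will be the endpoint $q=1$, i.e.\ $q'=\fz$: then $(L^\fz(Q))^\ast$ is strictly larger than $L^1(Q)$, so the Riesz-representation step producing the local pieces $f_Q$ in the previous-to-last paragraph breaks down, and it cannot simply be bypassed --- the boundedness of $\mathcal{L}$ on $L_s^\fz(Q)$ controls only the $L^\fz$-norm there and does not upgrade to boundedness in any $L^r$-norm with $r<\fz$ (a $(p',\fz,s)_\alpha$-atom concentrated on a thin subset of $Q$ can have arbitrarily small $L^r(Q)$-norm while $\mathcal{L}$ still acts on it non-negligibly). This case requires a separate, more delicate construction of a locally integrable representative of $\mathcal{L}$, in the spirit of the predual discussion of Dafni et al.\ \cite{DHKY18}; note, however, that the norm estimate of the previous paragraph already applies verbatim when $q=1$, so only the existence half of (ii) needs extra care there. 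A routine secondary point is the gluing over the cubes $Q_k$ when $\cx=\rn$: one must check that the successive polynomial corrections are mutually consistent and that the resulting $f$ lies in $L_{\loc}^1(\rn)$, both of which follow from the uniform per-piece bound $\|f_{Q_k}\|_{L^q(Q_k)}\ls|Q_k|^{\f1{p'}+\alpha-\f1{q'}}\|\mathcal{L}\|$.
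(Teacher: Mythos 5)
Your argument follows the paper's strategy closely, and for $q\in(1,\infty)$ it is essentially correct. Part (i) is the same computation (atomwise H\"older plus the $\ell^{p'}$--$\ell^p$ pairing, which is precisely the content of \eqref{2Holder} in the proof of Proposition \ref{weak-star}), summed first over the atoms of a single polymer and then over a decomposition in $HK$. For part (ii) with $q\in(1,\infty)$, you reproduce the paper's construction: embed $L^{q'}_s(Q)$ into $HK_{(p',q',s)_\alpha}(\mathcal{X})$ by normalizing to an atom, obtain local representatives $f_Q\in L^q(Q)/\mathcal{P}_s(Q)$ from Hahn--Banach and Riesz, glue them across an exhausting sequence of cubes by matching modulo polynomials, and extract the $JN$-norm bound by testing against a finite sum $g=\sum_i\lambda_i b_i$ of normalized $L^{q'}_s(Q_i)$-pieces. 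Your ``converse of H\"older'' step is exactly what the paper packages in Lemma \ref{Jensen equ}: for nonnegative $a_j,b_j$ one has $(\sum_j a_j^{1-p\alpha}b_j^p)^{1/p}=\sup\{\sum_j a_j\xi_jb_j:\,\sum_j a_j^{1+p'\alpha}\xi_j^{p'}=1\}$, which is the weighted $\ell^p$--$\ell^{p'}$ duality, so the two presentations are interchangeable. Two small slips in the norm estimate are harmless but worth fixing: the first inequality in your chain is actually an equality, since $\mathcal{L}(b_i)=\int_{Q_i}[f-P_{Q_i}^{(s)}(f)]b_i=c_i$ exactly (the factor $\theta$ there is gratuitous); and you should discard indices with $c_i=0$ before dividing by $c_i^{q-1}$ in the definition of $\phi_i$.

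The one substantive issue is the endpoint $q=1$ (so $q'=\infty$), which you correctly flag and then leave open. Note that the gap you see in your argument is also present, silently, in the paper's own proof: the sentence ``by the Riesz representation theorem, there exists a function $f_1\in L_s^q(Q^{(1)})/\mathcal{P}_s(Q^{(1)})$'' is not justified when $q'=\infty$, since $(L^\infty)^\ast\ne L^1$. The paper itself, however, supplies the tools to close the gap without new ideas, and you should use them rather than deferring to an outside reference. Proposition \ref{HKuvb=HKu8b} gives $HK_{(p',\infty,s)_\alpha}(\mathcal{X})=HK_{(p',v,s)_\alpha}(\mathcal{X})$ with equivalent norms for every $v\in(p',\infty)$; its proof uses only Proposition \ref{JNpqa=JNp1a}, Proposition \ref{weak-star}, Lemma \ref{Cavalieri} and Lemmas \ref{CZdecomposition}, \ref{atomdecomposition}, none of which depend on Theorem \ref{duality}, so there is no circularity. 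Choose such a $v$; then $\mathcal{L}\in(HK_{(p',v,s)_\alpha}(\mathcal{X}))^\ast$, and the already-proved case $q=v'\in(1,p)$ produces $f\in JN_{(p,v',s)_\alpha}(\mathcal{X})$ with $\mathcal{L}(g)=\int_{\mathcal{X}}fg$ for all $g\in HK^{\mathrm{fin}}_{(p',v,s)_\alpha}(\mathcal{X})\supset HK^{\mathrm{fin}}_{(p',\infty,s)_\alpha}(\mathcal{X})$ (every $(p',\infty,s)_\alpha$-atom is a $(p',v,s)_\alpha$-atom) and $\|f\|_{JN_{(p,v',s)_\alpha}(\mathcal{X})}\lesssim\|\mathcal{L}\|$. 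Finally, Proposition \ref{JNpqa=JNp1a}, whose proof rests on the John--Nirenberg inequality of Theorem \ref{JohnNirenberg} and is again independent of Theorem \ref{duality}, identifies $JN_{(p,v',s)_\alpha}(\mathcal{X})$ with $JN_{(p,1,s)_\alpha}(\mathcal{X})$ up to equivalent norms. Including this reduction would complete your proof.
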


\begin{remark}\label{dual}
When $\mathcal{X}=Q_0$, $\az=0=s$ and $q\in[1,p)$, by Proposition \ref{case0},
we know that Theorem \ref{duality} coincides with \cite[Theorem 6.6]{DHKY18}.
\end{remark}

Combining Theorem \ref{duality} and Proposition \ref{JNpq=Lq}, we have the following
atomic characterization of $L^{q'}_s(Q_0)$ for any given $q'\in(1,\fz)$ and $s\in\zz_+$.

\begin{corollary}\label{HKqq=Lq}
Let $p\in(1,\fz)$, $q\in[p,\fz)$, $s\in\zz_+$, $\alpha=0$ and a closed cube $Q_0\subsetneqq\rn$.
Then
$$L^{q'}_s(Q_0,|Q_0|^{q'-1}dx)=|Q_0|^\frac1p HK_{(p',q',s)_\alpha}(Q_0)$$
with equivalent norms.
\end{corollary}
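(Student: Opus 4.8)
The plan is to prove directly that, viewed inside $(JN_{(p,q,s)_\alpha}(Q_0))^\ast$ (with $\alpha=0$) under the pairing $g\mapsto(f\mapsto\int_{Q_0}fg)$, the space $HK_{(p',q',s)_\alpha}(Q_0)$ coincides with $L^{q'}_s(Q_0)$, and that for $g$ in this common space $\|g\|_{L^{q'}(Q_0)}\sim|Q_0|^{1/p-1/q}\|g\|_{HK_{(p',q',s)_\alpha}(Q_0)}$. Since $\|g\|_{L^{q'}_s(Q_0,|Q_0|^{q'-1}\,dx)}=|Q_0|^{1/q}\|g\|_{L^{q'}(Q_0)}$ and, because $q\ge p$, the exponent $\frac1p-\frac1q=\frac1{q'}-\frac1{p'}$ is nonnegative, this is exactly the asserted equality of spaces with norm-equivalence constants independent of $Q_0$. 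Theorem \ref{duality} enters through the very definition of $HK_{(p',q',s)_\alpha}(Q_0)$ as a subspace of $(JN_{(p,q,s)_\alpha}(Q_0))^\ast$ and through Proposition \ref{weak-star}, while Proposition \ref{JNpq=Lq} enters through the identity $JN_{(p,q,s)_\alpha}(Q_0)=L^q(Q_0)$ as sets (valid since $q\ge p$; see also Remark \ref{relation}). I treat the two inclusions separately.

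The inclusion $L^{q'}_s(Q_0)\subset HK_{(p',q',s)_\alpha}(Q_0)$ is immediate: for nonzero $f\in L^{q'}_s(Q_0)$ put $\lambda:=|Q_0|^{1/q-1/p}\|f\|_{L^{q'}(Q_0)}$ and $a:=f/\lambda$; then $a$ is supported in $Q_0$, satisfies $\|a\|_{L^{q'}(Q_0)}=|Q_0|^{1/q'-1/p'}$, and inherits from $f$ the vanishing moments up to order $s$, so $a$ is a $(p',q',s)_0$-atom on $Q_0$. Hence $f=\lambda a\in\widetilde{HK}_{(p',q',s)_\alpha}(Q_0)\subset HK_{(p',q',s)_\alpha}(Q_0)$ with $\|f\|_{HK_{(p',q',s)_\alpha}(Q_0)}\le|\lambda|=|Q_0|^{1/q-1/p}\|f\|_{L^{q'}(Q_0)}$, i.e. $|Q_0|^{1/p-1/q}\|f\|_{HK_{(p',q',s)_\alpha}(Q_0)}\le\|f\|_{L^{q'}(Q_0)}$.

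For the reverse inclusion I first bound a single polymer. If $g=\sum_j\lambda_j a_j$ pointwise, with $(p',q',s)_0$-atoms $a_j$ on pairwise disjoint cubes $Q_j\subset Q_0$ and $\sum_j|\lambda_j|^{p'}<\fz$, then by disjointness and the size condition of Definition \ref{uvas-atom}, $\|g\|_{L^{q'}(Q_0)}^{q'}=\sum_j|\lambda_j|^{q'}\|a_j\|_{L^{q'}(Q_j)}^{q'}\le\sum_j|\lambda_j|^{q'}|Q_j|^{1-q'/p'}$. Applying Hölder's inequality with exponents $p'/q'\in[1,\fz)$ and its conjugate, and using $\sum_j|Q_j|\le|Q_0|$, the last sum is at most $(\sum_j|\lambda_j|^{p'})^{q'/p'}|Q_0|^{1-q'/p'}$, whence $\|g\|_{L^{q'}(Q_0)}\le|Q_0|^{1/p-1/q}(\sum_j|\lambda_j|^{p'})^{1/p'}$; taking the infimum over atomic representations gives $\|g\|_{L^{q'}(Q_0)}\le|Q_0|^{1/p-1/q}\|g\|_{\widetilde{HK}_{(p',q',s)_\alpha}(Q_0)}$, and (since each atom has vanishing moments up to order $s$ and $x^\gamma$ is bounded on $Q_0$) $g\in L^{q'}_s(Q_0)$. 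Now let $g\in HK_{(p',q',s)_\alpha}(Q_0)$ and write $g=\sum_i g_i$ in $(JN_{(p,q,s)_\alpha}(Q_0))^\ast$ with polymers $g_i$ and $\sum_i\|g_i\|_{\widetilde{HK}_{(p',q',s)_\alpha}(Q_0)}<\fz$; the polymer bound makes $\sum_i g_i$ converge in $L^{q'}(Q_0)$ to some $\widetilde g\in L^{q'}_s(Q_0)$ with $\|\widetilde g\|_{L^{q'}(Q_0)}\le|Q_0|^{1/p-1/q}\|g\|_{HK_{(p',q',s)_\alpha}(Q_0)}$. Finally, since $JN_{(p,q,s)_\alpha}(Q_0)=L^q(Q_0)$ as sets, $L^{q'}$-convergence of the partial sums forces $\int_{Q_0}(\sum_{i\le N}g_i)f\to\int_{Q_0}\widetilde g f$ for every $f\in JN_{(p,q,s)_\alpha}(Q_0)$; comparing with $g=\sum_i g_i$ in $(JN_{(p,q,s)_\alpha}(Q_0))^\ast$ and with the fact (from the proof of Proposition \ref{weak-star}) that each polymer acts on $JN$ by integration, one gets $g=\widetilde g$ as functionals, so $g\in L^{q'}_s(Q_0)$ and the reverse inclusion, with its norm bound, follows.

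I expect the polymer estimate in the reverse inclusion to be the only real difficulty. A polymer may have infinitely many atoms with coefficients controlled only in $\ell^{p'}$, and since $q\ge p$ gives $p'\ge q'$ there is no embedding $\ell^{p'}\hookrightarrow\ell^{q'}$; the substitute is the geometric constraint $\sum_j|Q_j|\le|Q_0|$, which, through Hölder with exponent $p'/q'$, converts the $\ell^{p'}$ bound on the coefficients into an $L^{q'}$ bound on $g$. Once this is in hand everything else is bookkeeping of powers of $|Q_0|$, arranged so that the two inclusions combine into $\|g\|_{L^{q'}_s(Q_0,|Q_0|^{q'-1}\,dx)}\sim|Q_0|^{1/p}\|g\|_{HK_{(p',q',s)_\alpha}(Q_0)}$, i.e. $L^{q'}_s(Q_0,|Q_0|^{q'-1}\,dx)=|Q_0|^{1/p}HK_{(p',q',s)_\alpha}(Q_0)$ with equivalent norms. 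One can also package the identity abstractly: by Theorem \ref{duality} and Proposition \ref{JNpq=Lq}, $(HK_{(p',q',s)_\alpha}(Q_0))^\ast=JN_{(p,q,s)_\alpha}(Q_0)=|Q_0|^{1/p}[L^q(Q_0,|Q_0|^{-1}\,dx)/\mathcal{P}_s(Q_0)]$, whose predual, by quotient duality and the reflexivity of $L^q(Q_0)$ for $q\in(1,\fz)$, is $L^{q'}_s(Q_0)$ with the weighted norm and the integral pairing; uniqueness of the predual of a reflexive space then gives the claim, but checking that all these identifications are the concrete ones is precisely what the direct argument above accomplishes.
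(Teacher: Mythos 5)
Your proposal is correct, and for the nontrivial inclusion $|Q_0|^{1/p}HK_{(p',q',s)_0}(Q_0)\subset L^{q'}_s(Q_0,|Q_0|^{q'-1}dx)$ it takes a genuinely different route from the paper. The easy inclusion is identical in both: normalize $f\in L^{q'}_s(Q_0)$ to a single $(p',q',s)_0$-atom on $Q_0$ and read off the coefficient. For the reverse, the paper's proof is entirely abstract: it invokes Theorem \ref{duality} to write $(HK_{(p',q',s)_0}(Q_0))^\ast=JN_{(p,q,s)_0}(Q_0)$, Proposition \ref{JNpq=Lq} to identify the latter (after scaling) with $L^q(Q_0,|Q_0|^{-1}dx)/\mathcal{P}_s(Q_0)$, and then estimates $\|g\|_{L^{q'}}$ by testing $g$ against $h\in L^q(Q_0)$; this implicitly relies on $L^q$--$L^{q'}$ duality and reflexivity to realize the functional $g$ by an $L^{q'}$ function, a step the paper does not make explicit and for which it has no analogue of Proposition \ref{case0} when $s>0$. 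You replace this with a direct estimate: for a single $(p',q',s)_0$-polymer $g=\sum_j\lambda_j a_j$, disjointness of supports gives $\|g\|_{L^{q'}(Q_0)}^{q'}=\sum_j|\lambda_j|^{q'}\|a_j\|_{L^{q'}(Q_j)}^{q'}$, and since $q\ge p$ forces $p'\ge q'$, H\"older with exponent $p'/q'$ combined with $\sum_j|Q_j|\le|Q_0|$ converts the $\ell^{p'}$ bound on coefficients into $\|g\|_{L^{q'}(Q_0)}\le|Q_0|^{1/p-1/q}\|g\|_{\widetilde{HK}_{(p',q',s)_0}(Q_0)}$; summing over polymers and matching the $HK$-functional $g$ with the $L^{q'}$ limit against $f\in JN_{(p,q,s)_0}(Q_0)=L^q(Q_0)$ finishes. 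What you gain is an explicit, general-$s$ proof that elements of $HK_{(p',q',s)_0}(Q_0)$ are honest $L^{q'}$ functions with vanishing moments --- precisely the representation step the paper leaves tacit; the paper's route is shorter once Theorem \ref{duality} and Proposition \ref{JNpq=Lq} are in hand. (As a minor side remark, the paper's displayed computation uses the weight $|Q_0|^{q'-1}$ on the $L^q$ side, where Proposition \ref{JNpq=Lq} requires $|Q_0|^{-1}$; your weight bookkeeping, giving $\|g\|_{L^{q'}_s(Q_0,|Q_0|^{q'-1}dx)}=|Q_0|^{1/q}\|g\|_{L^{q'}(Q_0)}$, is the consistent one.)
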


Recall that, if $p\in (0,1]$, the classical atomic Hardy spaces $H_{\mathrm{at}}^{p,q,0}(\rn)$
are invariant on $q\in[1,\infty]\cap(p,\fz]$ (see, for instance,
\cite[Theorem A]{CW77}).
A similar phenomenon also appears in the Hardy-kind space $HK_{p,q}(Q_0)$ introduced
in \cite{DHKY18}, which was the predual space
of $JN_{p'}(Q_0)$ (see \cite[Theorem 6.6]{DHKY18}).
In Proposition \ref{HKuvb=HKu8b} below, we show that  the space
$HK_{(u,v,s)_\alpha}(\mathcal{X})$ also keeps such a property.
To this end, we need the following property of $JN_{(p,q,s)_\alpha}(\mathcal{X})$.

\begin{proposition}\label{JNpqa=JNp1a}
  Let $1\le q<p<\infty$, $s\in\zz_+$ and $\alpha\in[0,\infty)$.
  Then $JN_{(p,q,s)_\alpha}(\mathcal{X})=JN_{(p,1,s)_\alpha}(\mathcal{X})$
  with equivalent norms.
\end{proposition}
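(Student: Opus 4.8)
The plan is to prove the two inclusions separately. The inclusion $JN_{(p,q,s)_\alpha}(\mathcal{X})\subset JN_{(p,1,s)_\alpha}(\mathcal{X})$ together with $\|f\|_{JN_{(p,1,s)_\alpha}(\mathcal{X})}\le\|f\|_{JN_{(p,q,s)_\alpha}(\mathcal{X})}$ is immediate from H\"older's inequality (valid since $q\ge1$): for every pairwise disjoint family of cubes $\{Q_i\}_i$ in $\mathcal{X}$ and every $i$, $\fint_{Q_i}|f-P_{Q_i}^{(s)}(f)|\le[\fint_{Q_i}|f-P_{Q_i}^{(s)}(f)|^q]^{1/q}$, and one then multiplies by $|Q_i|^{1-\alpha p}$, sums over $i$, and takes the $p$-th root. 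So the whole content is the reverse bound $\|f\|_{JN_{(p,q,s)_\alpha}(\mathcal{X})}\ls\|f\|_{JN_{(p,1,s)_\alpha}(\mathcal{X})}$. I would fix $f\in JN_{(p,1,s)_\alpha}(\mathcal{X})$ and, for a cube $Q\subset\mathcal{X}$, write $N_Q$ for the supremum defining $\|\cdot\|_{JN_{(p,1,s)_\alpha}}$ taken only over pairwise disjoint families of cubes contained in $Q$, so that $N_Q\le\|f\|_{JN_{(p,1,s)_\alpha}(\mathcal{X})}<\infty$.

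The heart of the argument is a local John--Nirenberg type inequality: for every cube $Q\subset\mathcal{X}$ and every $\lambda>0$,
$$(\star)\qquad\lf|\lf\{x\in Q:\ |f(x)-P_Q^{(s)}(f)(x)|>\lambda\r\}\r|\le C\min\lf\{|Q|,\ N_Q^p\,|Q|^{\alpha p}\lambda^{-p}\r\},$$
with $C=C(n,p,s)$. I would prove $(\star)$ by iterating Calder\'on--Zygmund decompositions. Set $\mu:=N_Q|Q|^{\alpha-1/p}$ (note $\fint_Q|f-P_Q^{(s)}(f)|\le N_Q|Q|^{\alpha-1/p}=\mu$ from the one-cube family $\{Q\}$) and fix a large constant $K=K(n,s)$, chosen via \eqref{C_{(s)}} so that the oscillation of $f$ about its own polynomial on any stopping cube stays below that cube's stopping level. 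Let $\mathcal{G}_0:=\{Q\}$, and inductively let $\mathcal{G}_{m+1}$ be the maximal dyadic subcubes $R'$ of cubes $R\in\mathcal{G}_m$ with $\fint_{R'}|f-P_R^{(s)}(f)|>K^{m+1}\mu$, associating to each $R\in\mathcal{G}_m$ its polynomial $P_R^{(s)}(f)$. By \eqref{C_{(s)}} the polynomial corrections between consecutive generations are geometrically controlled, so telescoping along ancestry gives $|f-P_Q^{(s)}(f)|\ls K^m\mu$ outside $\bigcup\mathcal{G}_{m+1}$; hence $\{x\in Q:|f(x)-P_Q^{(s)}(f)(x)|>cK^m\mu\}\subset\bigcup\mathcal{G}_{m+1}$ up to a null set, for a suitable $c=c(n,s)$. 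Now set $V_m:=\sum_{R\in\mathcal{G}_m}|R|$, so $V_0=|Q|$; the stopping rule gives $V_{m+1}\le(K^{m+1}\mu)^{-1}\sum_{R\in\mathcal{G}_m}|R|\fint_R|f-P_R^{(s)}(f)|$, and here is the decisive use of the $JN$-structure: because $\mathcal{G}_m$ is pairwise disjoint inside $Q$, H\"older's inequality together with $\sum_{R\in\mathcal{G}_m}|R|^{1-\alpha p}(\fint_R|f-P_R^{(s)}(f)|)^p\le N_Q^p$ yields $\sum_{R\in\mathcal{G}_m}|R|\fint_R|f-P_R^{(s)}(f)|\le N_Q|Q|^\alpha V_m^{1/p'}$, whence $V_{m+1}\le K^{-(m+1)}|Q|^{1/p}V_m^{1/p'}$. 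Since $\frac{1}{1-1/p'}=p$, unrolling this recursion from $V_0=|Q|$ produces $V_m\ls|Q|K^{-pm}$; reading this back at $\lambda\approx cK^m\mu$ and using $|Q|\mu^p=N_Q^p|Q|^{\alpha p}$ gives the nontrivial half of $(\star)$, the other half being the trivial bound $|Q|$.

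Granting $(\star)$, fix a pairwise disjoint family $\{Q_i\}_i$ in $\mathcal{X}$ and put $F_i:=f-P_{Q_i}^{(s)}(f)$, $N_i:=N_{Q_i}$, $\mu_i:=N_i|Q_i|^{\alpha-1/p}$. By the layer-cake formula, splitting the integral in $\lambda$ at $\mu_i$, bounding the lower part by $|Q_i|$ and the upper part by $(\star)$ --- where the tail $\int_{\mu_i}^\infty\lambda^{q-1-p}\,d\lambda$ converges \emph{precisely because $q<p$} --- one obtains $\fint_{Q_i}|F_i|^q\ls\mu_i^q$, i.e. $[\fint_{Q_i}|F_i|^q]^{1/q}\ls N_i|Q_i|^{\alpha-1/p}$. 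Therefore
$$\sum_i|Q_i|\lf[|Q_i|^{-\alpha}\lf(\fint_{Q_i}|F_i|^q\r)^{1/q}\r]^p\ls\sum_i|Q_i|^{1-\alpha p}N_i^p|Q_i|^{\alpha p-1}=\sum_i N_i^p .$$
Finally $\sum_i N_i^p\le2\|f\|_{JN_{(p,1,s)_\alpha}(\mathcal{X})}^p$: for each $i$ pick a pairwise disjoint family $\{R_\ell^{(i)}\}_\ell$ of cubes in $Q_i$ with $\sum_\ell|R_\ell^{(i)}|^{1-\alpha p}(\fint_{R_\ell^{(i)}}|f-P_{R_\ell^{(i)}}^{(s)}(f)|)^p\ge N_i^p/2$; since the $Q_i$ are pairwise disjoint, the combined family $\{R_\ell^{(i)}\}_{i,\ell}$ is pairwise disjoint in $\mathcal{X}$, and testing $\|f\|_{JN_{(p,1,s)_\alpha}(\mathcal{X})}$ against it gives the bound. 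Taking the supremum over $\{Q_i\}_i$ completes the proof.

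The main obstacle is $(\star)$, and specifically the fact that John--Nirenberg--Campanato oscillations $\fint_R|f-P_R^{(s)}(f)|$ are controlled only \emph{on average} over pairwise disjoint families and never uniformly, in sharp contrast with $\BMO$. This is what forces the measures $V_m$ to be propagated through the nonlinear recursion $V_{m+1}\le K^{-(m+1)}|Q|^{1/p}V_m^{1/p'}$ instead of the geometric decay that a $\BMO$-type descent would produce, and it is exactly the solution of this recursion that upgrades the exponential decay of the classical John--Nirenberg inequality to the polynomial decay $\lambda^{-p}$ needed here; a subsidiary point is the bookkeeping of the polynomial corrections via \eqref{C_{(s)}} as the reference polynomial changes along the stopping tree, which is what pins down the choice of $K$.
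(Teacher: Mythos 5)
Your proposal is correct and follows essentially the same route as the paper. The paper isolates a good-$\lambda$ inequality for the dyadic maximal function of $f-P_{Q_0}^{(s)}(f)$ (Lemma \ref{goodlambda}), iterates it to obtain the weak-$L^p$ bound of Theorem \ref{JohnNirenberg}, and then concludes via the embedding $L^{p,\infty}(Q)\subset L^q(Q)$ for $q<p$ and the same localization over the subfamilies $\{Q_{i,j}\}_j\subset Q_i$; you inline that iterated Calder\'on--Zygmund stopping directly into the distributional estimate $(\star)$ and finish with the layer-cake integral, but the driving recursion $V_{m+1}\ls K^{-(m+1)}|Q|^{1/p}V_m^{1/p'}$ obtained by H\"older together with the $JN$ structure over disjoint cubes, its resolution yielding the $\lambda^{-p}$ tail, and the choice of the stopping constant through \eqref{C_{(s)}} are precisely the ones in the paper's argument.
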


\begin{remark}\label{jnq1}
\begin{itemize}
\item[(i)] When $X=Q_0$, $\az=0=s$ and $q=1$, Proposition \ref{JNpqa=JNp1a}
was obtained in \cite[Proposition 5.1]{DHKY18}.
\item[(ii)] From Proposition \ref{JNpq=Lq}, we deduce that $JN_{(p,q,s)_0}(Q_0)=JN_{(q,q,s)_0}(Q_0)$
for any given $p\in(1,\fz)$, $q\in [p,\fz)$ and $s\in\zz_+$.
By this and Proposition \ref{JNpqa=JNp1a}, we conclude that,
for any $p\in(1,\fz)$ and $s\in\zz_+$,
$$JN_{(p,q,s)_0}(Q_0)=
\begin{cases}
JN_{(p,1,s)_0}(Q_0),& q\in [1,p),\\
JN_{(q,q,s)_0}(Q_0),& q\in [p,\fz).
\end{cases}$$

We further claim that, for any given $p\in(1,\fz)$, $q\in [p,\fz)$, $s\in\zz_+$,
$\alpha\in [0,\infty)$ and a closed cube $Q_0\subsetneqq\rn$,
$JN_{(q,q,s)_\az}(Q_0)\subset JN_{(p,q,s)_\az}(Q_0)$ and,
for any $f\in JN_{(q,q,s)_\az}(Q_0)$,
\begin{equation}\label{JNpqJNqq}
|Q_0|^{-\frac1p}\|f\|_{JN_{(p,q,s)_\az}(Q_0)}
\le |Q_0|^{-\frac1q}\|f\|_{JN_{(q,q,s)_\az}(Q_0)}.
\end{equation}
Indeed, for the given $f\in JN_{(q,q,s)_\az}(Q_0)$, by Definition \ref{JNpqa}, we
know that $f\in L^1(Q_0)$. Thus, for any disjoint cubes $\{Q_i\}_i$ in $Q_0$,
by the Jensen inequality, we conclude that
\begin{align*}
  &\lf\{\sum_i\frac{|Q_i|}{|Q_0|}\lf[|Q_i|^{-\az}\lf\{\fint_{Q_i}\lf|
   f-P^{(s)}_{Q_i}(f) \r|^q \r\}^\frac1q \r]^p \r\}^\frac1p\\
  &\quad=\lf\{\sum_i\frac{|Q_i|}{|Q_0|}\lf[|Q_i|^{-\az q}\fint_{Q_i}\lf|
   f-P^{(s)}_{Q_i}(f) \r|^q \r]^\frac pq \r\}^\frac1p\\
  &\quad\le\lf\{\lf[\sum_i\frac{|Q_i|}{|Q_0|}|Q_i|^{-\az q}\fint_{Q_i}\lf|
   f-P^{(s)}_{Q_i}(f) \r|^q \r]^\frac pq \r\}^\frac1p\\
  &\quad=\lf[\sum_i\frac{|Q_i|}{|Q_0|}|Q_i|^{-\az q}\fint_{Q_i}\lf|
   f-P^{(s)}_{Q_i}(f) \r|^q \r]^\frac 1q\\
  &\quad=\lf[\sum_i\frac{|Q_i|}{|Q_0|}\lf\{|Q_i|^{-\az }\lf[\fint_{Q_i}\lf|
   f-P^{(s)}_{Q_i}(f) \r|^q \r]^\frac 1q \r\}^q \r]^\frac1q,
\end{align*}
which further implies that \eqref{JNpqJNqq} and
hence $JN_{(q,q,s)_\az}(Q_0)\subset JN_{(p,q,s)_\az}(Q_0)$ hold true.

Besides, for any $p\in(1,\fz)$, $q\in [p,\fz)$, $s\in\zz_+$ and
$\alpha\in (\frac{s+1}n,\infty)$, by \cite[Lemma 3.1]{BB18}, we know that
$JN_{(p,q,s)_\alpha}([0,1]^n)=\mathcal{P}_s([0,1]^n)$.
It is easy to see that this is also true when $[0,1]^n$ is replaced by
any closed cube $Q_0\subsetneqq\rn$.
However, it is still unknown whether or not
$JN_{(p,q,s)_\alpha}(Q_0)=JN_{(q,q,s)_\alpha}(Q_0)$ still holds true for any
$p\in(1,\fz)$, $q\in [p,\fz)$, $s\in\zz_+$ and $\alpha\in(0,\frac{s+1}n]$.

Moreover, it is also still unknown whether or not
$JN_{(p,q,s)_\alpha}(\rn)=JN_{(q,q,s)_\alpha}(\rn)$ still holds true for any
$p\in(1,\fz)$, $q\in [p,\fz)$, $s\in\zz_+$ and $\alpha\in[0,\fz)$.
\item[(iii)] Let $u\in(1,2)$, $v\in(1,\fz]$, $s\in\zz_+$, $\alpha\in[0,\fz)$ and
a closed cube $Q_0\subsetneqq\rn$.
We claim that, for any $u\in(1,2)$,
$HK_{(u,v,s)_\az}(Q_0)\subset H^{\frac1{\az+1},v,s}_{\mathrm{at}}(Q_0)$ and,
for any $g\in\bigcap_{u\in(1,2)}HK_{(u,v,s)_\az}(Q_0)$,
\begin{equation}\label{limHKu}
  \|g\|_{H^{\frac1{\az+1},v,s}_{\mathrm{at}}(Q_0)}
  \le \liminf_{u\to1^+} \|g\|_{HK_{(u,v,s)_\az}(Q_0)},
\end{equation}
here and hereafter, $\liminf_{u\to1^+}$ means that $u\in (1,2)$ and $u\to 1$.

Indeed, for any given $g\in HK_{(u,v,s)_\az}(Q_0)$, by Definition \ref{HKuvb},
we know that there exist $(u,v,s)_\az$-polymers $\{g_i\}_i$ such that
$g=\sum_i g_i$ in $(JN_{(u',v',s)_\az}(Q_0))^\ast$ and
\begin{equation}\label{uto1.1}
\sum_i\|g_i\|_{\widetilde{HK}_{(u,v,s)_\az}(Q_0)}\ls \|g\|_{HK_{(u,v,s)_\az}(Q_0)}.
\end{equation}
When $\az=0$, by Proposition \ref{case0}, without loss of generality,
we may assume that $g=\sum_i g_i$ in $L^u(Q_0)$.
From this and the fact that $L^u(Q_0)\subset L^1(Q_0)$,
we further deduce that  $g=\sum_i g_i$ in $L^1(Q_0)$ as well.
When $\az\in(0,\fz)$,
from this and the fact that $\mathcal{C}_{\az,v',s}(Q_0)\subset JN_{(u',v',s)_\az}(Q_0)$
which is a simple consequence of Corollary \ref{p=8},
we further deduce that  $g=\sum_i g_i$ in $(\mathcal{C}_{\az,v',s}(Q_0))^\ast$ as well.

Moreover, for any $\az\in[0,\fz)$ and any $i$, by Definition \ref{uvas-polymer}, we know that there exist
$\{\lambda_{i,j}\}_j\subset\cc$ and $(u,v,s)_\az$-atoms $\{g_{i,j}\}_j$ on disjoint cubes
$\{Q_{i,j}\}_j\subset Q_0$ such that $g_i=\sum_j g_{i,j}$ pointwise and
\begin{equation}\label{uto1.2}
\lf(\sum_j\lf|\lambda_{i,j}\r|^u \r)^\frac1u\ls\|g_i\|_{\widetilde{HK}_{(u,v,s)_\az}(Q_0)}.
\end{equation}
For any $i,\,j$, let $\widetilde{g_{i,j}}:=|Q_{i,j}|^{1/u-1}g_{i,j}$.
Then $\{\widetilde{g_{i,j}}\}_{i,j}$ are classical $(\frac1{\az+1},v,s)$-atoms
(see, for instance, Remark \ref{classical atom}) and
$g=\sum_{i,j}|Q_{i,j}|^{1-1/u}\lambda_{i,j}\widetilde{g_{i,j}}$
in $L^1(Q_0)$ when $\az=0$ or in $(\mathcal{C}_{\az,v',s}(Q_0))^\ast$
when $\az\in (0,\fz)$.
From this, the H\"{o}lder inequality, the disjointness of $\{Q_{i,j}\}_j$ for any $i$,
\eqref{uto1.1} and \eqref{uto1.2}, we deduce that
\begin{align*}
  \|g\|_{H^{\frac1{\az+1},v,s}_{\mathrm{at}}(Q_0)}
  &\le \sum_i \sum_j \lf|Q_{i,j}\r|^{1-\frac1u}\lf|\lambda_{i,j}\r|
  \le \sum_i \lf[\sum_j\lf|Q_{i,j}\r|^{(1-\frac1u)u'}\r]^\frac1{u'}
             \lf(\sum_j\lf|\lambda_{i,j}\r|^u \r)^\frac1u\\
  &\le |Q_0|^\frac1{u'} \sum_i \lf(\sum_j\lf|\lambda_{i,j}\r|^u \r)^\frac1u
  \ls |Q_0|^\frac1{u'} \sum_i \|g_i\|_{\widetilde{HK}_{(u,v,s)_\az}(Q_0)}\\
  &\ls |Q_0|^\frac1{u'} \|g\|_{HK_{(u,v,s)_\az}(Q_0)},
\end{align*}
which further implies that, for any given $u\in(1,2)$, $v\in(1,\fz]$, $s\in\zz_+$
and $\az\in[0,\fz)$,
$HK_{(u,v,s)_\az}(Q_0)\subset H^{\frac1{\az+1},v,s}_{\mathrm{at}}(Q_0)$
and \eqref{limHKu} holds true.
This finishes the proof of the above claim.

Moreover, for any given $v\in(1,\fz]$, $s\in\zz_+$, $\alpha\in[0,\fz)$ and
a closed cube $Q_0\subsetneqq\rn$, as a counterpart of Proposition \ref{Campanato&JNpqas},
it is very interesting to ask whether or not, for any $g\in\bigcap_{u\in(1,2)}HK_{(u,v,s)_\az}(Q_0)$
$$\|g\|_{H^{\frac1{\az+1},v,s}_{\mathrm{at}}(Q_0)}
=\lim_{u\to1^+} \|g\|_{HK_{(u,v,s)_\az}(Q_0)}$$
holds true, which is still unknown.
\end{itemize}
\end{remark}

Proposition \ref{JNpqa=JNp1a} is proved via the following
John--Nirenberg type inequality, which is also of independent interest.
In what follows, for any given $p\in[1,\fz)$, the
\emph{weak Lebesgue space $L^{p,\infty}(\mathcal{X})$} is defined to be the set of all
measurable functions $f$ such that
$$\|f\|_{L^{p,\infty}(\mathcal{X})}:=\sup_{\lambda\in(0,\fz)}\lambda
|\{x\in\mathcal{X}:\,\,|f(x)|>\lambda\}|^\frac1p<\fz.$$

\begin{theorem}\label{JohnNirenberg}
  Let $p\in(1,\infty)$, $s\in\zz_+$, $\alpha\in[0,\infty)$ and a closed cube $Q_0\subsetneqq\rn$.
  If $f\in JN_{(p,1,s)_\alpha}(Q_0)$, then
  $f-P_{Q_0}^{(s)}(f)\in L^{p,\infty}(Q_0)$ and there exists a positive constant
  $C_{(n,p,s)}$, depending only on $n$, $p$ and $s$, but independent of $f$, such that
  \begin{align}\label{JohnNirenbergforp1as}
  \lf\|f-P_{Q_0}^{(s)}(f)\r\|_{L^{p,\infty}(Q_0)}\le C_{(n,p,s)} \lf| Q_0 \r|^\alpha
    \|f\|_{JN_{(p,1,s)_\alpha}(Q_0)}.
  \end{align}
\end{theorem}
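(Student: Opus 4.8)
The plan is to run an iterative Calder\'on--Zygmund stopping-time argument, adapted to polynomials, while carefully tracking the $\ell^p$-summability built into the $JN$ seminorm. By homogeneity we may assume $\|f\|_{JN_{(p,1,s)_\alpha}(Q_0)}=1$; testing the seminorm against the single cube $Q_0$ gives $f-P_{Q_0}^{(s)}(f)\in L^1(Q_0)$ and $\fint_{Q_0}|f-P_{Q_0}^{(s)}(f)|\le t:=|Q_0|^{\alpha-1/p}$, and for every subcube $Q\subset Q_0$ one has $\|f\|_{JN_{(p,1,s)_\alpha}(Q)}\le1$ since disjoint families in $Q$ are disjoint families in $Q_0$. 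Put $C_*:=(1+C_{(s)})2^{n+1}$ with $C_{(s)}$ as in \eqref{C_{(s)}}. For $0<\lambda\le C_*^2 t$ the estimate $\lambda|\{|f-P_{Q_0}^{(s)}(f)|>\lambda\}|^{1/p}\le\lambda|Q_0|^{1/p}\le C_*^2|Q_0|^\alpha$ already suffices, so fix $\lambda>C_*^2 t$. Using the dyadic subdivision of $Q_0$, set $\mathcal{Q}_0:=\{Q_0\}$ and, inductively, let $\mathcal{Q}_{k+1}$ consist of all maximal dyadic subcubes $R$ of cubes $Q\in\mathcal{Q}_k$ with $\fint_R|f-P_Q^{(s)}(f)|>t_{k+1}$, where $t_k:=C_*^k t$; write $\pi(R)\in\mathcal{Q}_k$ for the cube containing such an $R$, and let $G_k:=\bigcup_{Q\in\mathcal{Q}_k}Q$, $\sigma_k:=|G_k|=\sum_{Q\in\mathcal{Q}_k}|Q|$. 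Using \eqref{C_{(s)}} to compare $P_Q^{(s)}(f)$ with $P_{\pi(Q)}^{(s)}(f)$ shows $\fint_Q|f-P_Q^{(s)}(f)|\le\frac12 t_{k+1}$ for $Q\in\mathcal{Q}_k$, so the stopping time terminates inside each $Q$; consequently the standard maximality bounds $t_{k+1}<\fint_R|f-P_{\pi(R)}^{(s)}(f)|\le2^n t_{k+1}$ hold, and $|f-P_Q^{(s)}(f)|\le t_{k+1}$ a.e.\ on $Q\setminus\bigcup\{R\in\mathcal{Q}_{k+1}:R\subset Q\}$ by the Lebesgue differentiation theorem.

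The second ingredient is a pointwise growth bound. For a.e.\ $x\in G_k\setminus G_{k+1}$, telescoping along the ancestor chain $Q_0=Q^{(0)}\supset Q^{(1)}\supset\dots\supset Q^{(k)}\ni x$ (with $Q^{(\ell-1)}=\pi(Q^{(\ell)})$) and using \eqref{C_{(s)}} on each jump, $\sup_{Q^{(\ell)}}|P_{Q^{(\ell-1)}}^{(s)}(f)-P_{Q^{(\ell)}}^{(s)}(f)|\le C_{(s)}\fint_{Q^{(\ell)}}|f-P_{Q^{(\ell-1)}}^{(s)}(f)|\le C_{(s)}2^n t_\ell$, so summing the geometric series and adding $|f-P_{Q^{(k)}}^{(s)}(f)(x)|\le t_{k+1}$ gives $|f(x)-P_{Q_0}^{(s)}(f)(x)|\le C_* t_{k+1}$. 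Hence, modulo a null set, $\{|f-P_{Q_0}^{(s)}(f)|>\lambda\}\subset G_{k_\lambda}$ with $k_\lambda:=\lfloor\log_{C_*}(\lambda/t)\rfloor-1\ge1$.

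The heart of the proof is the decay of $\sigma_k$. For $Q\in\mathcal{Q}_k$, summing $|R|\le t_{k+1}^{-1}\int_R|f-P_Q^{(s)}(f)|$ over the children $R\in\mathcal{Q}_{k+1}$ of $Q$ gives $\sum_{R\subset Q}|R|\le t_{k+1}^{-1}|Q|^{1+\alpha}\beta_Q$, where $\beta_Q:=|Q|^{-\alpha}\fint_Q|f-P_Q^{(s)}(f)|$. Summing over the pairwise disjoint family $\mathcal{Q}_k$, applying H\"older's inequality with exponents $p'$ and $p$, and using both $\sum_{Q\in\mathcal{Q}_k}|Q|\beta_Q^p\le\|f\|_{JN_{(p,1,s)_\alpha}(Q_0)}^p=1$ and $(1+\alpha-\frac1p)p'\ge1$ together with $|Q|\le|Q_0|$, one obtains the recursion
$$\sigma_{k+1}\le t_{k+1}^{-1}|Q_0|^\alpha\sigma_k^{1/p'}=C_*^{-(k+1)}|Q_0|^{1/p}\sigma_k^{1/p'},\qquad\sigma_0=|Q_0|.$$
Unrolling this, and using $1-\frac1{p'}=\frac1p$ so that all powers of $|Q_0|$ collapse to $|Q_0|^1$ while the accumulated exponent of $C_*$ is $\sum_{j=1}^k j(1-\frac1p)^{k-j}\ge kp-c(p)$ for some constant $c(p)$, gives $\sigma_k\le C_{(n,p,s)}|Q_0|C_*^{-kp}$ for all $k$; in particular $\sigma_k\to0$, so $|\bigcap_k G_k|=0$ and the containment of the superlevel set is legitimate. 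Combining with the previous paragraph, $|\{|f-P_{Q_0}^{(s)}(f)|>\lambda\}|\le\sigma_{k_\lambda}\le C_{(n,p,s)}|Q_0|C_*^{-k_\lambda p}\ls|Q_0|t^p\lambda^{-p}=|Q_0|^{\alpha p}\lambda^{-p}$, which together with the trivial small-$\lambda$ range yields \eqref{JohnNirenbergforp1as}.

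The main obstacle is obtaining the correct decay rate. The $\BMO$-type bound $\fint_Q|f-P_Q^{(s)}(f)|\le|Q|^{\alpha-1/p}$, used by itself, only yields $\sigma_{k+1}\le\frac12\sigma_k$, i.e.\ $\sigma_k\ls 2^{-k}$, which merely places $f-P_{Q_0}^{(s)}(f)$ in some $L^{r,\infty}(Q_0)$ with $r$ possibly far below $p$; one must instead feed the full $\ell^p$-summability of the $JN$ seminorm through H\"older, and it is precisely the interplay of the sublinear power $\sigma_k^{1/p'}$ with the geometric thresholds $t_k=C_*^k t$ that sharpens the decay to $\sigma_k\sim C_*^{-kp}$, which is exactly the $L^{p,\infty}$ rate. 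A secondary difficulty is the bookkeeping: the choice of $C_*$ must simultaneously force the stopping time to terminate and control the telescoped polynomial error, and the exponents of $|Q_0|$ and of $C_*$ in the unrolled recursion must be shown to collapse exactly as claimed.
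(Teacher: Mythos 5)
Your proposal is correct, and it runs on the same engine as the paper's proof: an iterated Calder\'on--Zygmund selection at geometrically growing heights $t_k = C^k t$, polynomial re-centering controlled by \eqref{C_{(s)}}, and a single application of H\"older's inequality per generation to convert the $\ell^p$-summability of the $JN$ seminorm into the recursion $\sigma_{k+1}\le C^{-(k+1)}|Q_0|^{1/p}\sigma_k^{1/p'}$, which unrolls to the $L^{p,\infty}$ rate. The paper packages this as a one-step good-$\lambda$ inequality for the dyadic maximal function $\mathcal{M}^{(d)}_{Q_0}(f-P_{Q_0}^{(s)}(f))$ (Lemma \ref{goodlambda}), iterates it, then reduces the $\alpha>0$ case to $\alpha=0$; you instead build the multi-generation stopping tree directly, re-center the polynomial at each generation's parent rather than working with a global maximal function, use a telescoping pointwise bound to pass from the superlevel sets of $f-P_{Q_0}^{(s)}(f)$ to the stopping sets, and absorb $\alpha$ inline via $|Q|\le|Q_0|$ in the H\"older step. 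These are cosmetic rather than structural differences; the decisive estimate $\sum_k j(1/p')^{k-j}\ge kp-c(p)$ appears, in disguise, in the paper's chain of inequalities following \eqref{JNinequ0} as well.
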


\begin{remark}\label{jn}
\begin{itemize}
\item[(i)]
  When $p\in(1,\fz)$, $\az=0=s$ and $q=1$, Theorem \ref{JohnNirenberg} is exactly the
  famous John--Nirenberg type inequality \cite[Lemma 3]{JN61}.
  Recall that the John--Nirenberg type inequality \cite[Lemma 3]{JN61} states that,
  for any $\lambda\in(0,\infty)$,
  $$\lambda \lf| \{x\in Q_0:\,\,|f(x)-f_{Q_0}|>\lambda\} \r|^\frac1p
  \le C \|f\|_{JN_{(p,1,0)_0}(Q_0)},$$
  namely, $f-f_{Q_0}\in L^{p,\infty}(Q_0)$ and
  $\|f-f_{Q_0}\|_{L^{p,\infty}(Q_0)}\le C
  \|f\|_{JN_{(p,1,0)_0}(Q_0)}$, where the positive constant $C$
  depends only on $n$ and $p$, but independent of $f$.
  Thus, Theorem \ref{JohnNirenberg} provides a fine generalization of \cite[Lemma 3]{JN61}.

\item[(ii)]
  We can reformulate \eqref{JohnNirenbergforp1as} as follows: for any $\lambda\in(0,\infty)$,
  \begin{align*}
  &\lambda\lf| \lf\{x\in Q_0:\,\,|Q_0|^{-\alpha}\lf|f-P_{Q_0}^{(s)}(f)\r|>\lambda\r\} \r|^\frac1p\\
  &\quad=|Q_0|^{-\alpha}(|Q_0|^\alpha\lambda)\lf| \lf\{x\in Q_0:\,\,
    \lf|f-P_{Q_0}^{(s)}(f)\r|>|Q_0|^{\alpha}\lambda\r\} \r|^\frac1p\\
  &\quad\le|Q_0|^{-\alpha} C_{(n,p,s)} |Q_0|^{\alpha}\|f\|_{JN_{(p,1,s)_\alpha}(Q_0)}\\
  &\quad=C_{(n,p,s)} \|f\|_{JN_{(p,1,s)_\alpha}(Q_0)},
  \end{align*}
  which can be seen as a counterpart of the John--Nirenberg type inequality for
  Campanato spaces as in
  \cite{Li08, LiangYang13}:
  $$\lf| \lf\{x\in Q_0:\,\,|Q_0|^{-\alpha}\lf|f-P_{Q_0}^{(s)}(f)\r|>\lambda\r\} \r|
  \le C_2e^{-\frac{C_3}{\|f\|_{\mathcal{C}_{1,\alpha,s}(Q_0)}}\lambda},
  \quad \forall\,\lambda\in(0,\infty),$$
  where $C_2$ and $C_3$ are positive constants depending only on the dimension $n$.
\end{itemize}
\end{remark}

Based on Proposition \ref{JNpqa=JNp1a}, we obtain the following property of
$HK_{(u,v,s)_\alpha}(\mathcal{X})$.

\begin{proposition}\label{HKuvb=HKu8b}
  Let $1<u<v\le\infty$, $s\in\zz_+$ and $\alpha\in[0,\fz)$.
  Then $HK_{(u,v,s)_\alpha}(\mathcal{X})=HK_{(u,\infty,s)_\alpha}(\mathcal{X})$
  with equivalent norms.
\end{proposition}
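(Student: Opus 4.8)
The plan is to prove the two inclusions $HK_{(u,\infty,s)_\alpha}(\mathcal{X})\subset HK_{(u,v,s)_\alpha}(\mathcal{X})$ and $HK_{(u,v,s)_\alpha}(\mathcal{X})\subset HK_{(u,\infty,s)_\alpha}(\mathcal{X})$ together with the corresponding norm estimates; the containment of function spaces and the norm inequalities should come out simultaneously from the atomic descriptions. The easy inclusion is $HK_{(u,\infty,s)_\alpha}\subset HK_{(u,v,s)_\alpha}$: every $(u,\infty,s)_\alpha$-atom $a$ on a cube $Q$ is, up to a multiplicative constant, also a $(u,v,s)_\alpha$-atom, since by H\"older's inequality $\|a\|_{L^v(Q)}\le |Q|^{1/v}\|a\|_{L^\infty(Q)}\le |Q|^{1/v-1/u-\alpha}$; support and moment conditions are identical. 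Hence every $(u,\infty,s)_\alpha$-polymer is a $(u,v,s)_\alpha$-polymer with no increase of norm, and the same for the series in $(JN_{(u',v',s)_\alpha}(\mathcal{X}))^\ast$ (note that by Proposition \ref{Campanato&JNpqas}-type reasoning, or directly, $JN_{(u',v',s)_\alpha}\subset JN_{(u',1,s)_\alpha}$ so pairings are consistent), giving $\|g\|_{HK_{(u,v,s)_\alpha}(\mathcal{X})}\le \|g\|_{HK_{(u,\infty,s)_\alpha}(\mathcal{X})}$.

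The substantive direction is $HK_{(u,v,s)_\alpha}(\mathcal{X})\subset HK_{(u,\infty,s)_\alpha}(\mathcal{X})$. Here I would follow the classical Calder\'on--Zygmund-type decomposition of a single $L^v$ atom into $L^\infty$ atoms, exactly as in the proof that the atomic Hardy spaces $H^{p,q,s}$ are independent of $q$ (Coifman--Weiss \cite{CW77}; see also \cite{TaiblesonWeiss80}). Start with a $(u,v,s)_\alpha$-atom $a$ on a cube $Q$: perform a Calder\'on--Zygmund stopping-time argument on $|a|^v$ at heights $2^k$ (relative to the average over $Q$), producing for each level a family of dyadic subcubes; on the good set $a$ is already bounded, and on each bad cube one subtracts the projection $P^{(s)}$ to restore the vanishing moments, writing $a=\sum_k c_k b_k$ where each $b_k$ is (a constant multiple of) a $(u,\infty,s)_\alpha$-atom and $\sum_k |c_k|^u \lesssim 1$ — this last bound being exactly where the gap $u<v$ is used, via the layer-cake / distributional estimate $\sum_k 2^{k}|\{|a|^v>2^k\}| \lesssim \|a\|_{L^v}^v$ combined with $u<v$. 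One must track that the auxiliary cubes all sit inside $Q\subset\mathcal{X}$ and are (or can be arranged to be) disjoint within each generation, so that the resulting object is a genuine $(u,\infty,s)_\alpha$-polymer. Then, given $g=\sum_i g_i$ with $g_i$ a $(u,v,s)_\alpha$-polymer and $\sum_i\|g_i\|_{\widetilde{HK}_{(u,v,s)_\alpha}(\mathcal{X})}<\infty$, expand each $g_i=\sum_j\lambda_{i,j}a_{i,j}$ into atoms and each atom into its $L^\infty$-decomposition; regroup the pieces over the disjoint cubes $\{Q_{i,j}\}_j$ (for fixed $i$) to reconstitute $(u,\infty,s)_\alpha$-polymers, using that within a fixed $i$ the subcubes produced from different $a_{i,j}$ lie in disjoint $Q_{i,j}$'s. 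Finally sum in $\ell^u$ over $j$ and in $\ell^1$ over $i$ to get $\|g\|_{HK_{(u,\infty,s)_\alpha}(\mathcal{X})}\lesssim \|g\|_{HK_{(u,v,s)_\alpha}(\mathcal{X})}$.

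The main obstacle is the bookkeeping needed to keep everything a \emph{polymer} rather than merely an element of the atomic Hardy space: the decomposition of one $L^v$ atom yields many $L^\infty$ atoms whose supporting cubes are nested inside $Q$, and one has to reorganize these — possibly at the cost of an extra $\ell^u$-summable constant factor — into families supported on \emph{pairwise disjoint} cubes so as to land back in $\widetilde{HK}_{(u,\infty,s)_\alpha}(\mathcal{X})$. A clean way to handle this is to note that a sum of $(u,\infty,s)_\alpha$-atoms all supported in a single cube $Q$, with $\ell^u$-summable coefficients, is itself controlled in $HK_{(u,\infty,s)_\alpha}$ by that $\ell^u$-norm times a dimensional constant (a one-cube polymer together with the subadditivity in Definition \ref{HKuvb}), so one does not actually need disjointness of the fine cubes — only of the original $Q_{i,j}$'s, which is given. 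A secondary technical point is verifying that all the identities $g=\sum(\cdots)$ continue to hold in the dual space $(JN_{(u',\infty,s)_\alpha}(\mathcal{X}))^\ast$; this follows because $JN_{(u',\infty,s)_\alpha}(\mathcal{X})\subset JN_{(u',v',s)_\alpha}(\mathcal{X})$ (larger second exponent gives a weaker, hence more easily finite, condition — more precisely $v'\le\infty$ so by Proposition \ref{JNpqa=JNp1a} and monotonicity the $JN$ norms are comparable in the relevant range) and the convergence estimates transfer through H\"older as in Remark \ref{jnq1}(iii).
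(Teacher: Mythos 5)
Your easy direction is fine, and your overall plan for the hard direction --- Calder\'on--Zygmund decomposition of each $L^v$-atom into $L^\infty$-atoms, a layer-cake estimate exploiting $u<v$, and Proposition~\ref{JNpqa=JNp1a} to reconcile the dual pairings --- is the same architecture the paper uses (via Lemma~\ref{atomdecomposition} and Lemma~\ref{Cavalieri}). However, the paragraph you offer as a ``clean way'' to bypass the disjointness bookkeeping is not correct, and that bookkeeping is precisely where the proof lives.

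You claim that a sum $\sum_k c_k b_k$ of $(u,\infty,s)_\alpha$-atoms supported on (possibly nested) subcubes of a fixed $Q$, with $\{c_k\}\in\ell^u$, satisfies $\lf\|\sum_k c_k b_k\r\|_{HK_{(u,\infty,s)_\alpha}}\lesssim\lf(\sum_k|c_k|^u\r)^{1/u}$ by subadditivity in Definition~\ref{HKuvb}. Subadditivity only yields the $\ell^1$-bound $\sum_k|c_k|$, and since $u>1$ one has $\|c\|_{\ell^u}\le\|c\|_{\ell^1}$ with strict and unbounded gap, so the $\ell^1$ bound does not imply the $\ell^u$ bound. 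In fact the claimed $\ell^u$ bound is false: take all $b_k=b$ a fixed atom on $Q$ and $c_k=1/N$ for $k=1,\dots,N$; then $\sum_k c_k b_k=b$ has $HK$-norm bounded below independently of $N$, while $(\sum_k|c_k|^u)^{1/u}=N^{1/u-1}\to0$. So one genuinely cannot discard the disjointness of the fine cubes.

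The way the paper resolves this is to regroup \emph{by Calder\'on--Zygmund generation $k$} rather than by the parent atom: writing each $(u,v,s)_\alpha$-atom $A_\ell$ on $Q_\ell$ as $A_\ell-P^{(s)}_{Q_\ell}(A_\ell)=\sum_{k,j}a^{(\ell)}_{k,j}$ with $a^{(\ell)}_{k,j}$ supported on the level-$k$ stopping cubes $Q^{(\ell)}_{k,j}\subset Q_\ell$, and setting $g_k:=\sum_{\ell,j}\lambda_\ell a^{(\ell)}_{k,j}$ (together with $g_{-1}:=\sum_\ell\lambda_\ell P^{(s)}_{Q_\ell}(A_\ell)$), one uses that for fixed $k$ the family $\{Q^{(\ell)}_{k,j}\}_{\ell,j}$ is pairwise disjoint, since the $\{Q_\ell\}_\ell$ are disjoint and the stopping cubes at a fixed generation inside a fixed $Q_\ell$ are disjoint. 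Thus each $g_k$ is an honest $(u,\infty,s)_\alpha$-polymer, and its $\widetilde{HK}$-norm is an $\ell^u$-norm over the disjoint cubes at level $k$. The $\ell^1$-summability over $k$ of $\|g_k\|_{\widetilde{HK}_{(u,\infty,s)_\alpha}}$ is then where the geometric decay of the level sets of $\mathcal{M}^{(d)}_{Q_\ell}A_\ell$ and the $L^v$-boundedness of the dyadic maximal operator enter, packaged in Lemma~\ref{Cavalieri}; this is the genuine use of $u<v$. Your sketch already contains the right raw ingredients, but without the regrouping-by-generation step the estimate does not close, and the substitute you proposed to avoid it is invalid.
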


\begin{remark}\label{hk}
 When $\mathcal{X}=Q_0$, $\az=0=s$ and $q=1$, by Proposition \ref{case0},
we know that Proposition \ref{HKuvb=HKu8b} coincides with \cite[Proposition 6.4]{DHKY18}.
\end{remark}

In the very recently article \cite{BB18}, A. Brudnyi and Y. Brudnyi introduced
a class of function spaces, $V_\kappa([0,1]^n)$, for any fixed multi-indices $\kappa$.
Let $\mathcal{P}_s([0,1]^n)$ be the set of all polynomials of degree not greater than $s$
on $[0,1]^n$. For any given $p,\,q\in[1,\infty]$, $\lambda\in\rr$, $s\in\zz_+$ and
$\kappa:=\{p,q,\lambda,s\}$, the \emph{space $V_\kappa([0,1]^n)$} is defined by setting
$$V_\kappa([0,1]^n):=\lf\{f\in L^q([0,1]^n):\,\,\|f\|_{V_\kappa([0,1]^n)}<\fz \r\},$$
where
$$\|f\|_{V_\kappa([0,1]^n)}:=\sup\lf\{\sum_i\lf[|Q_i|^{-\lambda}
\inf_{m\in \mathcal{P}_s(Q_i)}\|f-m\|_{L^q(Q_i)} \r]^p \r\}^{\frac1p}$$
with the supremum is taken over all collections of pairwise disjoint cubes
$\{Q_i\}_i$ in $[0,1]^n$ (see \cite[Definition 1.2]{BB18}).
We have the following equivalent relations.

\begin{proposition}\label{b}
Let $p\in (1,\fz)$, $q\in [1,\fz)$, $s\in\zz_+$,
$\az\in [0,\frac{s+1}{n}]$ and $\kappa=\{p,q,\alpha+\frac1q-\frac1p,s\}$.
Then the space $V_\kappa([0,1]^n)$ and $JN_{(p,q,s)_\alpha}([0,1]^n)$
coincide with equivalent norms.
\end{proposition}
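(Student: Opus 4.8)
The plan is to show that, under the stated constraints on the parameters, the defining functionals of $V_\kappa([0,1]^n)$ and $JN_{(p,q,s)_\alpha}([0,1]^n)$ are comparable cube-by-cube, and then take suprema over collections of pairwise disjoint cubes. First I would unpack the exponent bookkeeping: for $\kappa=\{p,q,\alpha+\frac1q-\frac1p,s\}$, the summand in $\|f\|_{V_\kappa([0,1]^n)}$ associated with a cube $Q_i$ is
$$
\lf[|Q_i|^{-(\alpha+\frac1q-\frac1p)}\inf_{m\in\mathcal{P}_s(Q_i)}\|f-m\|_{L^q(Q_i)}\r]^p
=|Q_i|^{-p(\alpha+\frac1q-\frac1p)}\lf[\inf_{m}\|f-m\|_{L^q(Q_i)}\r]^p,
$$
while the summand in $\|f\|_{JN_{(p,q,s)_\alpha}([0,1]^n)}$ is
$$
|Q_i|\lf[|Q_i|^{-\alpha}\lf\{\fint_{Q_i}|f-P_{Q_i}^{(s)}(f)|^q\r\}^{1/q}\r]^p
=|Q_i|\cdot|Q_i|^{-\alpha p}|Q_i|^{-p/q}\lf[\int_{Q_i}|f-P_{Q_i}^{(s)}(f)|^q\r]^{p/q}.
$$
A direct check shows the powers of $|Q_i|$ match exactly: $1-\alpha p-\frac pq=-p(\alpha+\frac1q-\frac1p)$. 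So the only difference between the two summands is that $V_\kappa$ uses the best $L^q$-approximation $\inf_{m\in\mathcal{P}_s(Q_i)}\|f-m\|_{L^q(Q_i)}$ while $JN_{(p,q,s)_\alpha}$ uses the specific polynomial $P_{Q_i}^{(s)}(f)$.

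The next step is the standard equivalence between these two quantities. On one side, $P_{Q_i}^{(s)}(f)\in\mathcal{P}_s(Q_i)$ is an admissible competitor, so trivially $\inf_{m}\|f-m\|_{L^q(Q_i)}\le\|f-P_{Q_i}^{(s)}(f)\|_{L^q(Q_i)}$, giving one inequality with constant $1$. For the reverse, I would take any $m\in\mathcal{P}_s(Q_i)$, write
$$
\|f-P_{Q_i}^{(s)}(f)\|_{L^q(Q_i)}\le\|f-m\|_{L^q(Q_i)}+\|m-P_{Q_i}^{(s)}(f)\|_{L^q(Q_i)},
$$
and observe that $m-P_{Q_i}^{(s)}(f)=P_{Q_i}^{(s)}(m-f)$ since $P_{Q_i}^{(s)}$ reproduces polynomials of degree at most $s$; then \eqref{C_{(s)}} (applied with $f$ replaced by $m-f$, after rescaling $Q_i$ to the unit cube) yields $\|m-P_{Q_i}^{(s)}(f)\|_{L^q(Q_i)}\le C_{(s)}|Q_i|^{1/q}\fint_{Q_i}|m-f|\le C_{(s)}\|f-m\|_{L^q(Q_i)}$ by Hölder's inequality, where $C_{(s)}$ depends only on $n$ and $s$ (the scaling is clean because $P^{(s)}$ commutes with dilations and translations). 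Taking the infimum over $m$ gives $\|f-P_{Q_i}^{(s)}(f)\|_{L^q(Q_i)}\le(1+C_{(s)})\inf_m\|f-m\|_{L^q(Q_i)}$. Hence the two summands are equivalent with constants depending only on $n$, $s$, $p$, $q$, uniformly in $Q_i$; summing over $i$ and taking the supremum over all collections of pairwise disjoint cubes in $[0,1]^n$ gives the norm equivalence. Finally, I would note that the underlying sets agree: a priori $V_\kappa$ requires $f\in L^q([0,1]^n)$, but since $[0,1]^n$ itself is an admissible cube, a finite $JN_{(p,q,s)_\alpha}$-norm already forces $\{\fint_{[0,1]^n}|f-P_{[0,1]^n}^{(s)}(f)|^q\}^{1/q}<\infty$, and combined with \eqref{C_{(s)}} this gives $f\in L^q([0,1]^n)$; conversely $V_\kappa([0,1]^n)\subset L^q([0,1]^n)\subset L^1_{\mathrm{loc}}([0,1]^n)$, so the two ambient conditions are equivalent.

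The role of the restriction $\alpha\in[0,\frac{s+1}{n}]$ deserves a remark, and this is the one genuinely delicate point. The norm-equivalence argument above in fact works for all $\alpha\in[0,\infty)$, so the hypothesis $\alpha\le\frac{s+1}{n}$ is not needed for the comparison of functionals per se. Its purpose is to avoid the degenerate regime: by \cite[Lemma 3.1]{BB18} (cited in Remark \ref{jnq1}(ii)), when $\alpha>\frac{s+1}{n}$ one has $JN_{(p,q,s)_\alpha}([0,1]^n)=\mathcal{P}_s([0,1]^n)$, and an analogous collapse occurs for $V_\kappa$; while both spaces still coincide there, the statement is phrased for the non-degenerate range to match \cite{BB18}'s setting, so I would simply state the equivalence of summands without invoking that bound and, if desired, add a sentence pointing out that the restriction is only cosmetic. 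Thus the main — and only mild — obstacle is making the rescaling in the application of \eqref{C_{(s)}} to $L^q$-norms fully explicit, i.e. checking that the constant $C_{(s)}$ there, which is stated for the normalized inequality $|P_Q^{(s)}(g)(x)|\le C_{(s)}\fint_Q|g|$, transfers to the bound $\|P_{Q_i}^{(s)}(g)\|_{L^q(Q_i)}\le C_{(s)}|Q_i|^{1/q}\fint_{Q_i}|g|$ with the same constant; this is immediate from integrating the pointwise bound over $Q_i$, so no real difficulty arises.
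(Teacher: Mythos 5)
Your proof is correct and follows essentially the same route as the paper: both reduce the claim to the cube-by-cube equivalence $[\fint_Q|f-P_Q^{(s)}(f)|^q]^{1/q}\sim\inf_{m\in\mathcal{P}_s(Q)}[\fint_Q|f-m|^q]^{1/q}$, with the nontrivial direction obtained by writing $m-P_Q^{(s)}(f)=P_Q^{(s)}(m-f)$ and applying \eqref{C_{(s)}} together with H\"older. Your explicit verification that the powers of $|Q_i|$ match and your remark that the ambient conditions ($f\in L^q$ versus $f\in L^1_{\loc}$) agree are left implicit in the paper, but they are correct and the argument is the same; note only that no rescaling to the unit cube is needed to apply \eqref{C_{(s)}}, since its constant is already $Q$-independent.
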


\begin{remark}
\begin{itemize}
\item[(i)] The predual of $V_\kappa([0,1]^n)$, denoted by $U_\kappa([0,1]^n)$, was given in
\cite[Theorem 2.6]{BB18} via some kind of atoms for any
$\kappa=\{p,q,\alpha+\frac1q-\frac1p,s\}$ satisfying $p\in[1,\infty]$,
$q\in(1,\infty]$, $s\in\zz_+$ and $\alpha\in[0, \frac{s+1}n]$.
But the assumptions in Theorem \ref{duality} are  $p\in(1,\fz)$, $q\in[1,\fz)$,
$s\in\zz_+$ and $\alpha\in[0,\fz)$, a little bit different.

Moreover, the   space $U_\kappa([0,1]^n)$ in \cite{BB18} is defined via \emph{finite}
linear combination of atoms, while to define the predual space $HK_{(p',q',s)_\alpha}(\mathcal{X})$ of $JN_{(p,q,s)_\alpha}(\mathcal{X})$, we do not need the finite sum requirement,
due to the application of the weak-$\ast$ topology.
It is well known that, for the classical Hardy space $H^1(\rn)$,
the finite atomic norm and the norm $\|\cdot\|_{H^1(\rn)}$
are equivalent on the finite atomic Hardy space $H^1_\fin(\rn)$ (see, for instance, \cite{msv08}).
However, it is still unknown whether or not both the norm $\|\cdot\|_{U_\kappa([0,1]^n)}$
and the norm $\|\cdot\|_{HK_{(p',q',s)_\alpha}([0,1]^n)}$ are equivalent on $U_\kappa([0,1]^n)$
for any given $p\in(1,\fz)$, $q\in(1,\fz)$, $s\in\zz_+$ and $\alpha\in[0, \frac{s+1}n]$.

\item[(ii)] By \cite[Theorem 2.7]{BB18}, we know that, for any given
$p\in(1,\infty]$, $q\in(1,\infty)$, $s\in\zz_+$ and $\alpha\in[0, \frac{s+1}n)$,
the dual of the closure of all smooth functions
in $JN_{(p,q,s)_\alpha}([0,1]^n)$ is the space $HK_{(p',q',s)_\alpha}([0,1]^n)$.
It is easy to see that this is also true when $[0,1]^n$ is replaced by
any closed cube $Q_0$ of $\rn$. However, it is still unknown whether
or not this is true when $[0,1]^n$ is replaced by $\rn$.
\end{itemize}
\end{remark}

The remainder of this article is organized as follows.
We prove Proposition \ref{Campanato&JNpqas}, Corollary \ref{p=8},
Propositions \ref{weak-star} and \ref{case0} in Section \ref{section2}.
Section \ref{section3} is devoted to the proofs of Proposition \ref{JNpq=Lq},
Theorem \ref{duality}, and Corollary \ref{HKqq=Lq}.
One key property we need is the density of
$HK_{(u,v,s)_\alpha}^{\mathrm{fin}}(\mathcal{X})$ in $HK_{(u,v,s)_\alpha}(\mathcal{X})$
(see Lemma \ref{dense} below).
We also need  Lemma \ref{Jensen equ}, which can be seen as a special case of
the Jensen inequality.  In Section \ref{section4}, we first prove
Proposition \ref{b}. Then we give the proofs of Propositions \ref{JNpqa=JNp1a} and
\ref{HKuvb=HKu8b}, namely, the independence of $JN_{(p,q,s)_\alpha}(\mathcal{X})$
and $HK_{(u,v,s)_\alpha}(\mathcal{X})$ over their second sub-indices.
A good-$\lambda$ inequality and a John--Nirenberg type inequality
for $JN_p(Q_0)$ with $f_{Q_0}$ replaced by $P_{Q_0}^{(s)}(f)$ are also obtained.

Finally, we make some conventions on notation. Throughout the article,
we denote by $C$ and $\widetilde{C}$ {positive constants} which
are independent of the main parameters, but they may vary from line to
line. Moreover, we use $C_{(\gamma,\ \beta,\ \ldots)}$ to denote a positive constant depending on the indicated
parameters $\gamma,\ \beta,\ \ldots$. Constants with subscripts, such as $C_{0}$ and $A_1$, do
not change in different occurrences. Moreover, the symbol $f\lesssim g$ represents that
$f\le Cg$ for some positive constant $C$. If $f\lesssim g$ and $g\lesssim f$,
we then write $f\sim g$. We also use the following
convention: If $f\le Cg$ and $g=h$ or $g\le h$, we then write $f\ls g\sim h$
or $f\ls g\ls h$, \emph{rather than} $f\ls g=h$
or $f\ls g\le h$. Let $\mathbb{N}:=\{1,\,2,...\}$ and $\mathbb{Z}_+:=\mathbb{N}\cup\{0\}$.
For any $p\in[0,1]$, let $p'$ be its \emph{conjugate index}, that is, $p'$ satisfies  $1/p+1/p'=1$.
For any subset $E$ of $\rn$, we use the \emph{symbol ${\mathbf 1}_E$} to denote its
characteristic function.
For any cube $Q\subset\rn$, the \emph{symbol $\mathcal{P}_s(Q)$} denotes the set
of all polynomials of degree not greater than $s$ on $Q$.

\section{Proofs of Proposition \ref{Campanato&JNpqas}, Corollary \ref{p=8},
 Propositions \ref{weak-star} and \ref{case0}}\label{section2}

We first show Proposition \ref{Campanato&JNpqas} and Corollary \ref{p=8}.

\begin{proof}[Proof of Proposition \ref{Campanato&JNpqas}]
  Let $\widetilde{Q}$ be a cube in $\mathcal{X}$ and $\{Q_i\}_i$ a collection
  of disjoint cubes in $\mathcal{X}$ which contains $\widetilde{Q}$ as its element.
  Then, for any $f\in L^1_{\loc}(\mathcal{X})$,
  \begin{align*}
    \|f\|_{JN_{(p,q,s)_\alpha}(\mathcal{X})}
    &\geq\lf\{\sum_i|Q_i|\lf[|Q_i|^{-\alpha}\lf\{\fint_{Q_i}\lf|f-P_{Q_i}^{(s)}(f)\r|^q\r\}
    ^{\frac 1q}\r]^p\r\}^{\frac 1p}\\
    &\geq \lf| \widetilde{Q}\r|^{\frac1p}
    \lf| \widetilde{Q}\r|^{-\alpha}\lf[\fint_{\widetilde{Q}}\lf|f-
    P_{\widetilde{Q}}^{(s)}(f)\r|^q\r]^{\frac 1q}.
  \end{align*}
  Hence
  $$\liminf_{p\to\infty}\|f\|_{JN_{(p,q,s)_\alpha}(\mathcal{X})}
  \geq\lf| \widetilde{Q}\r|^{-\alpha}\lf[\fint_{\widetilde{Q}}\lf|f-
    P_{\widetilde{Q}}^{(s)}(f)\r|^q\r]^{\frac 1q}.$$
  By the arbitrariness of $\widetilde{Q}$, we obtain
  $\liminf_{p\to\infty}\|f\|_{JN_{(p,q,s)_\alpha}(\mathcal{X})}
  \geq\|f\|_{\mathcal{C}_{\alpha,q,s}(\mathcal{X})}$.

  Now, let  $f\in\cup_{r\in[1,\infty)}\cap_{p\in[r,\infty)}
  JN_{(p,q,s)_\alpha}(\mathcal{X})$. Then there exist some $r_0\in[1,\infty)$
  such that $f\in JN_{(p,q,s)_\alpha}(\mathcal{X})$ for any $p\in[r_0,\infty)$.
  We now show
  $$\limsup_{p\to\infty}\|f\|_{JN_{(p,q,s)_\alpha}(\mathcal{X})}
  \le\|f\|_{\mathcal{C}_{\alpha,q,s}(\mathcal{X})}.$$
  Without loss of generality, we may assume that
  $\|f\|_{\mathcal{C}_{\alpha,q,s}(\mathcal{X})}=1$.
  Then, for any $p\in[r_0,\infty)$, we have
  \begin{align*}
    \|f\|_{JN_{(p,q,s)_\alpha}(\mathcal{X})}^p
    &=\sup\sum_i|Q_i|\lf\{|Q_i|^{-\alpha}\lf[\fint_{Q_i}\lf|f-P_{Q_i}^{(s)}(f)\r|^q\r]
      ^{\frac 1q}\r\}^p\\
    &\le\sup\sum_i|Q_i|\lf\{|Q_i|^{-\alpha}\lf[\fint_{Q_i}\lf|f-P_{Q_i}^{(s)}(f)\r|^q\r]
      ^{\frac 1q}\r\}^{r_0}
     =\|f\|_{JN_{(r_0,q,s)_\alpha}(\mathcal{X})}^{r_0},
  \end{align*}
  where the supremum is taken over all collections of pairwise disjoint cubes $\{Q_i\}_i$
  of $\mathcal{X}$ and  the inequality holds true because
  $|Q_i|^{-\alpha}[\fint_{Q_i}|f-P_{Q_i}^{(s)}(f)|^q]^{\frac 1q}
  \le \|f\|_{\mathcal{C}_{\alpha,q,s}(\mathcal{X})}=1$.
  Letting $p\to\infty$, we have
  $$\limsup_{p\to\infty}\|f\|_{JN_{(p,q,s)_\alpha}(\mathcal{X})}
  \le1=\|f\|_{\mathcal{C}_{\alpha,q,s}(\mathcal{X})}.$$
  This finishes  the proof of Proposition \ref{Campanato&JNpqas}.
\end{proof}

\begin{proof}[Proof of Corollary \ref{p=8}]
  To show this corollary, by Proposition \ref{Campanato&JNpqas}, it suffices to prove that,
  if $f\in\mathcal{C}_{\alpha,q,s}(Q_0)$ with $q\in[1,\infty)$ and $s\in\zz_+$, then,
  for any $p\in(1,\fz)$,
  $$f\in JN_{(p,q,s)_\alpha}(Q_0)\quad\mathrm{and}\quad\|f\|_{\mathcal{C}_{\alpha,q,s}(Q_0)}
  =\lim_{p\to\fz}\|f\|_{JN_{(p,q,s)_\alpha}(Q_0)}.$$
  Indeed, for any $f\in\mathcal{C}_{\alpha,q,s}(Q_0)$ and $p\in(1,\fz)$,
  \begin{align*}
    \|f\|_{JN_{(p,q,s)_\alpha}(Q_0)}^p
    &=\sup\sum_i|Q_i|\lf\{|Q_i|^{-\alpha}\lf[\fint_{Q_i}\lf|f-P_{Q_i}^{(s)}(f)\r|^q\r]
      ^{\frac 1q}\r\}^p\\
    &\le \sup\sum_i|Q_i|\|f\|_{\mathcal{C}_{\alpha,q,s}(Q_0)}^p
     =|Q_0|\|f\|_{\mathcal{C}_{\alpha,q,s}(Q_0)}^p<\fz,
  \end{align*}
  where the supremum is taken over all collections of pairwise disjoint cubes $\{Q_i\}_i$
  of $Q_0$. Thus, $f\in JN_{(p,q,s)_\alpha}(Q_0)$ for any $p\in(1,\fz)$.
  This, combined with Proposition \ref{Campanato&JNpqas},
  then finishes the proof of Corollary \ref{p=8}.
\end{proof}

Next, we prove Propositions \ref{weak-star} and \ref{case0}.

\begin{proof}[Proof of Proposition \ref{weak-star}]
  Let $u\in(1,\infty)$, $v\in (1,\infty]$, $s\in\zz_+$, $\alpha\in[0,\fz)$
  and $g\in \widetilde{HK}_{(u,v,s)_\alpha}(\mathcal{X})$.
  Then there exist disjoint cubes $\{Q_j\}_j$ in $\mathcal{X}$
  such that $g=\sum_j \lambda_ja_j$ for
  some $(u,v,s)_\alpha$-atoms $\{a_j\}_j$ and $\{\lambda_j\}_j\subset\cc$ satisfying
  $$\lf(\sum_j\lf|\lambda_j\r|^{u}\r)^{\frac1{u}}
  \lesssim\|g\|_{\widetilde{HK}_{(u,v,s)_\alpha}(\mathcal{X})}.$$
  From this and the H\"{o}lder inequality, we deduce that,
  for any $f\in JN_{(u',v',s)_\alpha}(\mathcal{X})$,
  \begin{align}\label{2Holder}
    |\langle f,g\rangle|:=&\lf|\int_{\mathcal{X}}fg\r|\le\sum_j\lf|\int_{Q_j}f \lambda_ja_j\r|
    =\sum_j|Q_j|\lf|\fint_{Q_j}\lf[f-P_{Q_j}^{(s)}(f)\r]\lambda_ja_j\r|\\
    \le&\sum_j|Q_j|\lf[\fint_{Q_j}\lf|f-P_{Q_j}^{(s)}(f)\r|^{v'}\r]^{\frac1{v'}}
        \lf[\fint_{Q_j}|\lambda_ja_j|^{v}\r]^{\frac1{v}}\notag\\
    \le&\lf\{\sum_j|Q_j|^{1-p\alpha}\lf[\fint_{Q_j}\lf|f-P_{Q_j}^{(s)}(f)\r|^{v'}\r]
         ^{\frac {u'}{v'}}\r\}^{\frac 1{u'}}
         \lf\{\sum_j|Q_j|^{1+u\alpha}\lf[\fint_{Q_j}|\lambda_ja_j|^{v}\r]
         ^{\frac{u}{v}}\r\}^\frac1{u}\notag\\
    =&\lf\{\sum_i|Q_i|\lf[|Q_i|^{-\alpha}\lf\{\fint_{Q_i}\lf|f-P_{Q_i}^{(s)}(f)\r|^{v'}\r\}
         ^{\frac 1{v'}}\r]^{u'}\r\}^{\frac 1{u'}}
         \lf\{\sum_j|Q_j|\lf[|Q_j|^\alpha\lf(\fint_{Q_j}|\lambda_ja_j|^{v}\r)
         ^{\frac 1{v}}\r]^{u}\r\}^\frac 1{u}\notag\\
    \le&\lf\{\sum_i|Q_i|\lf[|Q_i|^{-\alpha}\lf\{\fint_{Q_i}\lf|f-P_{Q_i}^{(s)}(f)\r|^{v'}\r\}
         ^{\frac 1{v'}}\r]^{u'}\r\}^{\frac 1{u'}}\lf(\sum_j\lf|\lambda_j\r|^{u}\r)^{\frac1{u}}\notag\\
    \lesssim&\|f\|_{JN_{(u',v',s)_\alpha}(\mathcal{X})}
        \|g\|_{\widetilde{HK}_{(u,v,s)_\alpha}(\mathcal{X})}\notag,
  \end{align}
  which implies $\widetilde{HK}_{(u,v,s)_\alpha}(\mathcal{X})\subset
  (JN_{(u',v',s)_\alpha}(\mathcal{X}))^\ast$.
  This finishes the proof of Proposition \ref{weak-star}.
\end{proof}

\begin{proof}[Proof of Proposition \ref{case0}]
  Let $Q_0$ be a closed cube in $\rn$, $Q_0\subsetneqq\rn$, $u\in(1,\fz)$, $v\in(u,\fz]$
  and $g=\sum_{i\in\nn} g_i$ in $(JN_{(u',v',0)_0}(Q_0))^\ast$, where
  $\{g_i\}_{i\in\nn}\subset\widetilde{HK}_{(u,v,0)_0}(Q_0)$
  and $\sum_{i\in\nn} \|g_i\|_{\widetilde{HK}_{(u,v,0)_0}(Q_0)}<\fz$.
  Then, for any $M,\,m\in\nn$, by \eqref{2Holder} and an obvious fact
  $\|f\|_{JN_{(u',v',0)_0}(Q_0)}\le2\|f\|_{L^{u'}(Q_0)}$, we have
  \begin{align*}
    \lf\|\sum_{i=M}^{M+m} g_i \r\|_{L^{u}(Q_0)}
    &=\sup_{\|f\|_{L^{u'}(Q_0)}\le1}\lf|\lf\langle\sum_{i=M}^{M+m} g_i,f \r\rangle \r|
    \le \sup_{\|f\|_{L^{u'}(Q_0)}\le1} \sum_{i=M}^{M+m}
       \|g_i\|_{\widetilde{HK}_{(u,v,0)_0}(Q_0)}\|f\|_{JN_{(u',v',0)_0}(Q_0)}\\
    &\le2 \sum_{i=M}^{M+m}\|g_i\|_{\widetilde{HK}_{(u,v,0)_0}(Q_0)},
  \end{align*}
  which implies that $\{g_i\}_i$ is a Cauchy sequence in $L^u(Q_0)$ and hence
  there exists some $\widetilde{g}\in L^u(Q_0)$ such that $\widetilde{g}=\sum_i g_i$
  in $L^u(Q_0)$. We claim that $g=\widetilde{g}$ in $(JN_{(u',v',0)_0}(Q_0))^\ast$.
  Indeed, for any $f\in JN_{(u',v',0)_0}(Q_0)$ and any $M\in\nn$, by \eqref{2Holder},
  we obtain
  $$\lf|\int_{Q_0}\sum_{i=M}^\fz g_i f \r|
    \le \sum_{i=M}^\fz \lf\|g_i \r\|_{\widetilde{HK}_{(u,v,0)_0}(Q_0)}
        \lf\|f \r\|_{JN_{(u',v',0)_0}(Q_0)}.$$
  From this and $\sum_{i\in\nn} \|g_i\|_{\widetilde{HK}_{(u,v,0)_0}(Q_0)}<\fz$,
  we deduce that, for any $f\in JN_{(u',v',0)_0}(Q_0)$,
  $\sum_i \int_{Q_0}g_i f= \int_{Q_0} \sum_i g_i f$ and hence
  $$\langle\widetilde{g},f \rangle=\int_{Q_0} \sum_i g_i f=\sum_i \int_{Q_0}g_i f
  =\sum_i \langle g_i,f\rangle=\langle g,f\rangle.$$
  This finishes the proof of Proposition \ref{case0}.
\end{proof}

\section{Proofs of Proposition \ref{JNpq=Lq}, Theorem \ref{duality} and
Corollary \ref{HKqq=Lq}}\label{section3}

We first prove Theorem \ref{duality}. To this end, we begin with the following density property.

\begin{lemma}\label{dense}
  Let $u\in(1,\infty)$, $v\in(1,\infty]$, $s\in\zz_+$ and $\alpha\in[0,\fz)$.
  Then $HK_{(u,v,s)_\alpha}^{\mathrm{fin}}(\mathcal{X})$
  is dense in $HK_{(u,v,s)_\alpha}(\mathcal{X})$.
\end{lemma}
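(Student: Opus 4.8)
The plan is to take an arbitrary $g\in HK_{(u,v,s)_\alpha}(\mathcal{X})$ and approximate it in the $HK$-norm by finite linear combinations of $(u,v,s)_\alpha$-atoms. By Definition \ref{HKuvb}, we may write $g=\sum_{i\in\nn}g_i$ in $(JN_{(u',v',s)_\alpha}(\mathcal{X}))^\ast$, where each $g_i\in\widetilde{HK}_{(u,v,s)_\alpha}(\mathcal{X})$ and $\sum_i\|g_i\|_{\widetilde{HK}_{(u,v,s)_\alpha}(\mathcal{X})}<\infty$. Expanding each polymer $g_i$ into atoms via Definition \ref{uvas-polymer}, we obtain a doubly-indexed family: $g_i=\sum_j\lambda_{i,j}a_{i,j}$ pointwise with $(u,v,s)_\alpha$-atoms $a_{i,j}$ on disjoint cubes $\{Q_{i,j}\}_j$ and $(\sum_j|\lambda_{i,j}|^u)^{1/u}\le 2\|g_i\|_{\widetilde{HK}_{(u,v,s)_\alpha}(\mathcal{X})}$ (say). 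The natural candidate for the approximants is to truncate both sums: set $g^{(N)}:=\sum_{i=1}^N\sum_{j=1}^N\lambda_{i,j}a_{i,j}$, which clearly lies in $HK_{(u,v,s)_\alpha}^{\mathrm{fin}}(\mathcal{X})$.

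First I would show that the tail $g-g^{(N)}$ has small $HK$-norm. Splitting $g-g^{(N)}=\sum_{i>N}g_i+\sum_{i=1}^N\big(g_i-\sum_{j=1}^N\lambda_{i,j}a_{i,j}\big)$, the first piece contributes at most $\sum_{i>N}\|g_i\|_{\widetilde{HK}_{(u,v,s)_\alpha}(\mathcal{X})}$, which tends to $0$ as $N\to\infty$ by summability. For the second piece, for each fixed $i\le N$ the remainder $\sum_{j>N}\lambda_{i,j}a_{i,j}$ is again a $(u,v,s)_\alpha$-polymer (atoms on a sub-collection of the disjoint cubes $\{Q_{i,j}\}_j$), with $\widetilde{HK}$-norm at most $(\sum_{j>N}|\lambda_{i,j}|^u)^{1/u}$; summing over $i=1,\dots,N$ gives a bound $\sum_{i=1}^N(\sum_{j>N}|\lambda_{i,j}|^u)^{1/u}$. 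The subtlety here is that this inner tail must be made uniformly small across $i=1,\dots,N$ while $N$ itself grows; I would handle this by first choosing $N_0$ so that $\sum_{i>N_0}\|g_i\|<\epsilon$, then for each $i\le N_0$ choosing $J_i$ so that $(\sum_{j>J_i}|\lambda_{i,j}|^u)^{1/u}<\epsilon/N_0$, and using the approximant $\sum_{i=1}^{N_0}\sum_{j\le J_i}\lambda_{i,j}a_{i,j}$ — i.e. the truncation indices need not be equal. This shows $\|g-g^{(N_0)}\|_{HK_{(u,v,s)_\alpha}(\mathcal{X})}\lesssim\epsilon$.

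The one genuine subtlety — and the step I expect to be the main obstacle — is reconciling the \emph{mode of convergence}: $g$ is defined as $\sum_i g_i$ in the weak-$\ast$ topology of $(JN_{(u',v',s)_\alpha}(\mathcal{X}))^\ast$, and to say $g^{(N_0)}$ approximates $g$ in $HK$-norm I must know that $g^{(N_0)}$ and $g$ agree as elements of the dual space, i.e. that the pointwise/atomic bookkeeping is consistent with the functional-theoretic identity. Here I would invoke Proposition \ref{weak-star}, which embeds $\widetilde{HK}_{(u,v,s)_\alpha}(\mathcal{X})$ (and hence its finite sums) into $(JN_{(u',v',s)_\alpha}(\mathcal{X}))^\ast$, together with the estimate \eqref{2Holder}: for any $f\in JN_{(u',v',s)_\alpha}(\mathcal{X})$, $|\langle g-g^{(N_0)},f\rangle|\le\|g-g^{(N_0)}\|_{HK_{(u,v,s)_\alpha}(\mathcal{X})}\|f\|_{JN_{(u',v',s)_\alpha}(\mathcal{X})}$, so the difference we have constructed, which is a genuine $HK$ element with small norm, is precisely the difference of the two functionals. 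Thus $g^{(N_0)}\to g$ in $HK_{(u,v,s)_\alpha}(\mathcal{X})$, and since $\epsilon>0$ is arbitrary, $HK_{(u,v,s)_\alpha}^{\mathrm{fin}}(\mathcal{X})$ is dense. A minor point to check along the way is that each finite sum $\sum_{i\le N_0}\sum_{j\le J_i}\lambda_{i,j}a_{i,j}$ indeed lies in $HK_{(u,v,s)_\alpha}^{\mathrm{fin}}(\mathcal{X})$ as defined — it is a finite linear combination of $(u,v,s)_\alpha$-atoms, so this is immediate from Definition \ref{HKuvb}.
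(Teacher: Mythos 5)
Your proposal is correct and follows essentially the same route as the paper's proof: decompose $g$ into a sum of polymers, expand each polymer into atoms, and truncate at two levels, controlling the outer tail by the summability of $\|g_i\|_{\widetilde{HK}}$ and the inner tails by the $\ell^u$-summability of the atomic coefficients. The paper uses a single truncation index $M$ for all $i\le N$ rather than a varying $J_i$, but as you note this is immaterial (take $M=\max_{i\le N_0}J_i$), and the mode-of-convergence concern you flag is indeed the point Proposition \ref{weak-star} and estimate \eqref{2Holder} resolve, exactly as you say.
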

\begin{proof}
  Let $g\in HK_{(u,v,s)_\alpha}(\mathcal{X})$. Assume that
  $g=\sum_{i=1}^\infty g_i$ in $(JN_{(u',v',s)_\alpha}(\mathcal{X}))^\ast$ and
  \begin{align*}
  \sum_{i=1}^\infty \|g_i\|_{\widetilde{HK}_{(u,v,s)_\alpha}(\mathcal{X})}\le2
  \|g\|_{HK_{(u,v,s)_\alpha}(\mathcal{X})}<\infty.
  \end{align*}
  Moreover, without loss of generality, we may assume that, for any $i\in\nn$,
  there exist $\{\lambda_{i,j}\}_{j=1}^\fz\subset\cc$ and  $(u,v,s)_\alpha$-atoms
  $\{a_{i,j}\}_{j=1}^\infty$ such that $g_i=\sum_{j=1}^\infty \lambda_{i,j}a_{i,j}$ pointwise and
  \begin{align*}
  \lf(\sum_{j=1}^\infty\lf|\lambda_{i,j}\r|^u\r)^\frac1u
    \le2\lf\|g_i\r\|_{\widetilde{HK}_{(u,v,s)_\alpha}(\mathcal{X})}<\infty.
  \end{align*}
  Then, for any $N,\,M\in\nn$, we have $\sum_{i=1}^N\sum_{j=1}^M \lambda_{i,j}a_{i,j}
   \in HK_{(u,v,s)_\alpha}^{\mathrm{fin}}(\mathcal{X})$
  and
  \begin{align*}
    \lf\|g-\sum_{i=1}^N\sum_{j=1}^M \lambda_{i,j}a_{i,j}\r\|_
     {HK_{(u,v,s)_\alpha}(\mathcal{X})}
    &=\lf\|g-\sum_{i=1}^N g_i\r\|_{HK_{(u,v,s)_\alpha}(\mathcal{X})}
      +\lf\|\sum_{i=1}^N g_i-\sum_{i=1}^N\sum_{j=1}^M \lambda_{i,j}a_{i,j}\r\|_
       {HK_{(u,v,s)_\alpha}(\mathcal{X})}\\
    &\le\sum_{i=N+1}^\infty\lf\|g_i\r\|_{\widetilde{HK}_{(u,v,s)_\alpha}(\mathcal{X})}
      +\lf\|\sum_{i=1}^N\sum_{j=M+1}^\infty \lambda_{i,j}a_{i,j}\r\|_
       {HK_{(u,v,s)_\alpha}(\mathcal{X})}\\
    &\le\sum_{i=N+1}^\infty\lf\|g_i\r\|_{\widetilde{HK}_{(u,v,s)_\alpha}(\mathcal{X})}
      +\sum_{i=1}^N\lf\|\sum_{j=M+1}^\infty \lambda_{i,j}a_{i,j}\r\|_
       {\widetilde{HK}_{(u,v,s)_\alpha}(\mathcal{X})}\\
    &\le\sum_{i=N+1}^\infty\lf\|g_i\r\|_{\widetilde{HK}_{(u,v,s)_\alpha}(\mathcal{X})}
      +\sum_{i=1}^N\lf(\sum_{j=M+1}^\infty\lf|\lambda_{i,j}\r|^u\r)^\frac1u,
  \end{align*}
  which further implies that $HK_{(u,v,s)_\alpha}^{\mathrm{fin}}(\mathcal{X})$
  is dense in $HK_{(u,v,s)_\alpha}(\mathcal{X})$
  and hence completes the proof of Lemma \ref{dense}.
\end{proof}

\begin{lemma}\label{Jensen equ}
  Let $p\in(1,\infty)$, $\frac1p+\frac1{p'}=1$, $\alpha\in[0,\infty)$ and $N\in\nn$.
  Then, for any positive
  constants $\{a_j\}_{j=1}^N$ and $\{b_j\}_{j=1}^N$, there exist positive
  constants $\{\xi_j\}_{j=1}^N$ such that
  \begin{align}\label{ablambda}
    \sum_{j=1}^N a_j^{1+p'\alpha}\xi_j^{p'}=1
    \quad \mathrm{and}\quad
    \lf(\sum_{j=1}^N a_j^{1-p\alpha} b_j^p\r)^\frac1p=\sum_{j=1}^N a_j \xi_j b_j.
  \end{align}
\end{lemma}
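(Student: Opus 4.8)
The plan is to reduce the problem to a single scalar equation by an explicit ansatz for $\xi_j$. The second equation in \eqref{ablambda} is the equality case of Hölder's inequality (applied to the sequences $(a_j^{1/p'}\xi_j a_j^{\alpha})_j$ — no, more directly: to $(a_j\xi_j b_j)_j$ as $(a_j^{1/p'+\alpha}\xi_j)_j\cdot(a_j^{1/p-\alpha}b_j)_j$), so it is natural to look for $\xi_j$ making the two factors proportional. Concretely, I would set
\begin{align*}
\xi_j:=t\,a_j^{-p'\alpha-1}\lf(a_j^{1-p\alpha}b_j^p\r)^{p'/p}
     =t\,a_j^{-p'\alpha-1}a_j^{(1-p\alpha)(p'-1)}b_j^{p'}
\end{align*}
for a scalar $t>0$ to be chosen; here I have used $p'/p=p'-1$. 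With this choice the first equation becomes $t^{p'}\sum_{j=1}^N a_j^{1+p'\alpha}a_j^{-p'(p'\alpha+1)}a_j^{(1-p\alpha)(p'-1)p'}b_j^{p'p'}$... which after collecting the exponents of $a_j$ simplifies, and one solves for $t$; then one checks the second equation holds with the same $t$.

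First I would verify the exponent bookkeeping: using $\frac1p+\frac1{p'}=1$, i.e. $p'=\frac{p}{p-1}$ and $(p-1)(p'-1)=1$, one confirms that with the displayed $\xi_j$ the quantity $\sum_j a_j^{1+p'\alpha}\xi_j^{p'}$ equals $t^{p'}\sum_j a_j^{1-p\alpha}b_j^p$ (the $a_j$-exponents and $b_j$-exponents collapse neatly because of $(p-1)(p'-1)=1$). Hence choosing
\begin{align*}
t:=\lf(\sum_{j=1}^N a_j^{1-p\alpha}b_j^p\r)^{-1/p'}
\end{align*}
makes the first identity hold. Second, I would substitute the same $\xi_j$ and $t$ into $\sum_j a_j\xi_j b_j$: this equals $t\sum_j a_j^{-p'\alpha}a_j^{(1-p\alpha)(p'-1)}b_j^{p'+1}$, and again the exponent identity $(p-1)(p'-1)=1$ forces each summand to be $t\,a_j^{1-p\alpha}b_j^p$, so the sum is $t\sum_j a_j^{1-p\alpha}b_j^p=\lf(\sum_j a_j^{1-p\alpha}b_j^p\r)^{1-1/p'}=\lf(\sum_j a_j^{1-p\alpha}b_j^p\r)^{1/p}$, which is exactly the right-hand side of the second equation in \eqref{ablambda}. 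Since all $a_j,b_j$ are positive and the sum is finite, $t$ is a well-defined positive number and each $\xi_j$ is positive, as required.

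The only subtlety — and the step most prone to arithmetic slips rather than genuine difficulty — is the exponent arithmetic; everything rests on the two identities $p'=p/(p-1)$ and $(p-1)(p'-1)=1$, which handle both the $\alpha$-dependent and $\alpha$-independent powers simultaneously. I expect no real obstacle: the lemma is precisely the sharp Hölder equality case in disguise, and the explicit $\xi_j$ above is forced by requiring the two Hölder factors to be proportional. So the proof is: write down $\xi_j$ and $t$ as above, and check the two identities by the exponent computation just sketched.
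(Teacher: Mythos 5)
Your ansatz for $\xi_j$ contains an arithmetic slip that breaks both identities. With $\xi_j = t\, a_j^{-p'\alpha-1}\left(a_j^{1-p\alpha}b_j^p\right)^{p'/p}$, the power of $b_j$ in $\xi_j$ is $p\cdot\frac{p'}{p}=p'$, so the power of $b_j$ in $a_j^{1+p'\alpha}\xi_j^{p'}$ is $(p')^2$; your claim that this equals $t^{p'}a_j^{1-p\alpha}b_j^p$ therefore forces $(p')^2=p$, which holds only for two isolated values of $p$. The $a_j$-exponent breaks for the same reason. A concrete check: take $p=p'=2$, $\alpha=0$, $N=1$, $a_1=1$, $b_1=2$; then your $t=(a_1b_1^2)^{-1/p'}=1/2$ and $\xi_1=t\,a_1^{-1}(a_1b_1^2)^{1}=2$, yet $a_1\xi_1^{p'}=4\neq1$.

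The Hölder-equality idea itself is sound, but you solved the exponent equation incorrectly. Requiring $a_j^{1+p'\alpha}\xi_j^{p'}=c\,a_j^{1-p\alpha}b_j^p$ gives $\xi_j=c^{1/p'}a_j^{-(p+p')\alpha/p'}b_j^{p/p'}=c^{1/p'}a_j^{-p\alpha}b_j^{p-1}$, using $p/p'=p-1$ and $(p+p')/p'=p$. Taking $\xi_j := t\,a_j^{-p\alpha}b_j^{p-1}$ with $t:=\left(\sum_{j=1}^N a_j^{1-p\alpha}b_j^p\right)^{-1/p'}$ makes both identities hold, and that is what you should have verified. The paper reaches the same place by a slightly different route: it first solves the $\alpha=0$ case with $\xi_j=\left(\sum_j a_jb_j^p\right)^{-1/p'}b_j^{1/(p'-1)}$ (note $1/(p'-1)=p-1$), and then reduces the general case to $\alpha=0$ via the substitution $A_j:=a_j^{1-p\alpha}$, $B_j:=b_j$ and the rescaling $\xi_j:=\Omega_j a_j^{-p\alpha}$. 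Your one-shot ansatz would be slightly cleaner once the exponents are corrected, but as written the formula is wrong, and the ``I would verify the exponent bookkeeping'' step you flagged as the main risk is exactly where the argument fails.
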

\begin{proof}
  We first show that this lemma holds true for $\alpha=0$.
  In this case, \eqref{ablambda} is equivalent to
  \begin{align}\label{ablambda1}
    \sum_{j=1}^N a_j\xi_j^{p'}=1
    \quad \mathrm{and}\quad
    \sum_{j=1}^N a_j \xi_j^{p'} \lf(\frac{b_j}{\xi_j^{p'-1}}\r)^p
    =\lf(\sum_{j=1}^N a_j \xi_j^{p'} \frac{b_j}{\xi_j^{p'-1}}\r)^p.
  \end{align}
  For any $j\in\{1,\dots,N\}$, letting
  $$\xi_j:=\lf(\sum_{j=1}^N a_j b_j^p\r)^{-\frac{1}{p'}}b_j^{\frac{1}{p'-1}},$$
  then $\{\xi_j\}_{j=1}^N$ satisfies, for any $j\in\{1,\dots,N\}$,
  $$\frac{b_j}{\xi_j^{p'-1}}= \lf(\sum_{j=1}^N a_j b_j^p\r)^{\frac{1}{p}}
    \quad\mathrm{and}\quad
    \sum_{j=1}^N a_j\xi_j^{p'}=1,$$
  which further implies that $\{\xi_j\}_{j=1}^N$ is a solution of \eqref{ablambda1}
  and hence of \eqref{ablambda}.

  When $\alpha\in(0,\infty)$, for any $j\in\{1,\dots,N\}$, let
  $A_j:=a_j^{1-p\alpha}$
  and $B_j:=b_j$. Then, applying the above proved conclusion to
  $\{A_j\}_{j=1}^N$ and $\{B_j\}_{j=1}^N$, we know that there exist positive
  constants $\{\Omega_j\}_{j=1}^N$ such that
  $$\sum_{j=1}^N A_j\Omega_j^{p'}=1
    \quad \mathrm{and}\quad
    \lf(\sum_{j=1}^N A_j B_j^p\r)^\frac1p=\sum_{j=1}^N A_j \Omega_j B_j.$$
  Namely,
  $$\sum_{j=1}^N a_j^{1+p'\alpha}\lf(\Omega_j a_j^{-p\alpha}\r)^{p'}=1
    \quad \mathrm{and}\quad
    \lf(\sum_{j=1}^N a_j^{1-p\alpha} b_j^p\r)^\frac1p
    =\sum_{j=1}^N a_j \Omega_j a_j^{-p\alpha} b_j.$$
  Consequently, \eqref{ablambda} holds true with
  $\xi_j:=\Omega_j a_j^{-p\alpha}$ for any $j\in\{1,\dots,N\}$.
  This finishes the proof of Lemma \ref{Jensen equ}.
\end{proof}

Now, we show Theorem \ref{duality}.

\begin{proof}[Proof of Theorem \ref{duality}]
  For any given $p,\,q,\,s,\,\alpha$ as in this theorem, let
  $f\in JN_{(p,q,s)_\alpha}(\mathcal{X})$. Then $p'\in(1,\fz)$ and  $q'\in(1,\infty]$.
  For any given  $g\in HK_{(p',q',s)_\alpha}(\mathcal{X})$,
  assume that $g=\sum_{i=1}^\infty g_i$ in $(JN_{(p,q,s)_\alpha}(\mathcal{X}))^\ast$ satisfying
  $\sum_{i=1}^\infty \|g_i\|_{\widetilde{HK}_{(u,v,s)_\alpha}(\mathcal{X})}\le2
  \|g\|_{HK_{(u,v,s)_\alpha}(\mathcal{X})}$. Then, by \eqref{2Holder},
  we have
  \begin{align*}
  \lf|\mathcal{L}_f(g)\r|
  &=|\langle f,g\rangle|\le \sum_{i=1}^\infty|\langle f,g_i\rangle|
  \lesssim\sum_{i=1}^\infty\|f\|_{JN_{(p,q,s)_\alpha}(\mathcal{X})}
    \|g_i\|_{\widetilde{HK}_{(u,v,s)_\alpha}(\mathcal{X})}\\
  &\lesssim \|f\|_{JN_{(p,q,s)_\alpha}(\mathcal{X})}\|g\|_{HK_{(u,v,s)_\alpha}(\mathcal{X})}.
  \end{align*}
  This implies that
  \begin{align}\label{norm inequ1}
    \lf\|\mathcal{L}_f\r\|_{(HK_{(p',q',s)_\alpha}(\mathcal{X}))^\ast}
    \lesssim\|f\|_{JN_{(p,q,s)_\alpha}(\mathcal{X})},
  \end{align}
  which completes the proof of (i).

  Now let $\mathcal{L}\in(HK_{(p',q',s)_\alpha}(\mathcal{X}))^\ast$.
  Take a sequence $\{Q^{(k)}\}_{k\in\nn}$ of increasing closed cubes such that
  $Q^{(k)}\uparrow \mathcal{X}$ as $k\to\fz$.
  Then it is easy to see that $\mathcal{L}\in (L^{q'}_s(Q^{(k)}))^\ast$.
  We now claim that there exists a function $f$ on $\mathcal{X}$ such that
  \begin{align}\label{L_k}
  \mathcal{L}(g)=\int_{\mathcal{X}}fg,\quad\forall\,g\in L_s^{q'}(Q^{(k)}),
  \quad \forall\,k\in\nn.
  \end{align}
  Indeed, if letting $\mathcal{P}_s(Q^{(i)})$ for any $i\in\nn$ denote the set of
  all polynomials of degree not greater than $s$ on $Q^{(i)}$, then,
  by the Riesz representation theorem, there exists a function
  $f_1\in L_s^q(Q^{(1)})/\mathcal{P}_s(Q^{(1)})$ such that
  $$\mathcal{L}(g)=\int_{Q^{(1)}}f_1g,\quad\forall\,g\in L_s^{q'}(Q^{(1)}).$$
  Meanwhile, there exists a function
  $f_2\in L_s^q(Q^{(2)})/\mathcal{P}_s(Q^{(2)})$ such that
  $$\mathcal{L}(g)=\int_{Q^{(2)}}f_2g,\quad\forall\,g\in L_s^{q'}(Q^{(2)}).$$
  From these and $Q^{(1)}\subset Q^{(2)}$, we deduce that
  $$\int_{Q^{(1)}}\lf(f_1-f_2\r)g=0,\quad\forall\,g\in L_s^{q'}(Q^{(1)})$$
  and hence
  $$\int_{Q^{(1)}}\lf(f_1-f_2\r)\lf[g-P_{Q^{(1)}}^{(s)}(g)\r]=0,
    \quad\forall\,g\in L^{q'}(Q^{(1)}).$$
  Thus, by \eqref{P_Q^s(f)}, we have
  $$\int_{Q^{(1)}}\lf[f_1-f_2-P_{Q^{(1)}}^{(s)}(f_1-f_2)\r]\lf[g-P_{Q^{(1)}}^{(s)}(g)\r]=0,
    \quad\forall\,g\in L^{q'}(Q^{(1)})$$
  and hence
  $$\int_{Q^{(1)}}\lf[f_1-f_2-P_{Q^{(1)}}^{(s)}(f_1-f_2)\r]g=0,
    \quad\forall\,g\in L^{q'}(Q^{(1)}).$$
  This implies that $f_1-f_2=P_{Q^{(1)}}^{(s)}(f_1-f_2)\in \mathcal{P}_s(Q^{(1)})$
  almost everywhere.
  Letting
  $$f(x):=
    \begin{cases}
      f_1(x),  &\forall\,x\in Q^{(1)}, \\
      f_2(x)+P_{Q^{(1)}}^{(s)}(f_1-f_2)(x),\quad &\forall\,x\in Q^{(2)}\backslash Q^{(1)},
    \end{cases}$$
  we then have
  $$f(x)=f_2(x)+P_{Q^{(1)}}^{(s)}(f_1-f_2)(x) \quad \mathrm{almost\,\,every}\quad x\in Q^{(2)}$$
  and
  $$\mathcal{L}(g)=\int_{Q^{(k)}}fg,\quad\forall\,g\in L_s^{q'}(Q^{(k)}),
  \quad \forall\,k\in\{1,2\}.$$
  Repeating the above procedure, we obtain a function $f$ on $\mathcal{X}$
  such that \eqref{L_k} holds true.

  Next, we show that
  \begin{align}\label{restriction}
  \mathcal{L}(g)=\int_\mathcal{X} fg,\quad\forall\,g\in
   HK_{(p',q',s)_\alpha}^{\mathrm{fin}}(\mathcal{X}).
  \end{align}
  Indeed, for any $g\in HK_{(p',q',s)_\alpha}^{\mathrm{fin}}(\mathcal{X})$,
  by Definition \ref{HKuvb}, we know that
  there exist $M\in\nn$, $\{\lambda_m\}_{m=1}^M\subset\cc$ and
  $(p',q',s)_\alpha$-atoms $\{a_m\}_{m=1}^M$
  such that $g=\sum_{m=1}^M \lambda_m a_m$ pointwise. Then, by \eqref{L_k}, we have
  $$\mathcal{L}(g)=\mathcal{L}\lf(\sum_{m=1}^M \lambda_m a_m\r)
  =\sum_{m=1}^M \lambda_m\mathcal{L}(a_m)
  =\sum_{m=1}^M \lambda_m\int_{\mathcal{X}}fa_m
  =\int_{\mathcal{X}}f\sum_{m=1}^M \lambda_m a_m
  =\int_{\mathcal{X}}fg$$
  and hence \eqref{restriction} holds true.

  We now show that $f\in JN_{(p,q,s)_\alpha}(\mathcal{X})$. Let
  $\{Q_j\}_j$ be a collection of disjoint cubes in $\mathcal{X}$. To prove that
  $f\in JN_{(p,q,s)_\alpha}(\mathcal{X})$, it suffices to
  show that there exists a positive constant $C$ such that, for any $N\in\nn$,
  \begin{align}\label{finite sum}
    \lf\{\sum_{j=1}^N|Q_j|\lf[|Q_j|^{-\alpha}\lf\{\fint_{Q_j}\lf|f-P_{Q_j}^{(s)}(f)\r|^q\r\}
      ^{\frac 1q}\r]^p\r\}^{\frac 1p}
    \le C \|\mathcal{L}\|_{(HK_{(p',q',s)_\alpha}(\mathcal{X}))^\ast}.
  \end{align}
  Indeed, for any
  $j\in\{1,\dots,N\}$, by \eqref{P_Q^s(f)}, we obtain
  \begin{align*}
    \lf[\fint_{Q_j}\lf|f-P_{Q_j}^{(s)}(f)\r|^q\r]^{\frac1q}
    &=\sup\lf\{\fint_{Q_j}\lf[f-P_{Q_j}^{(s)}(f)\r]g:
      \,\,g\in L^{q'}(Q_j)\,\,\mathrm{and}\,\,
      \fint_{Q_j}\lf|g\r|^{q'}\le1\r\}\\
    &=\sup\lf\{\fint_{Q_j}\lf[f-P_{Q_j}^{(s)}(f)\r]
    \lf[g-P_{Q_j}^{(s)}(g)\r]:
      \,\,g\in L^{q'}(Q_j)\,\,\mathrm{and}\,\,\fint_{Q_j}\lf|g\r|^{q'}\le1\r\}\\
    &=\sup\lf\{\fint_{Q_j}f
    \lf[g-P_{Q_j}^{(s)}(g)\r]:\,\,g\in L^{q'}(Q_j)
      \,\,\mathrm{and}\,\,\fint_{Q_j}\lf|g\r|^{q'}\le1\r\}
  \end{align*}
  and hence there exists some $\widetilde{g_j}\in L^{q'}(Q_j)$ with
  $\fint_{Q_j}|\widetilde{g_j}|^{q'}\le1$ such that
  \begin{align*}
    \lf[\fint_{Q_j}\lf|f-P_{Q_j}^{(s)}(f)\r|^q\r]^{\frac1q}
    \le2\fint_{Q_j}f\lf[\widetilde{g_j}-P_{Q_j}^{(s)}(\widetilde{g_j})\r].
  \end{align*}
  For any $j\in\{1,\dots,N\}$, let $g_j:=\widetilde{g_j}-P_{Q_j}^{(s)}(\widetilde{g_j})$.
  Then $g_j\in L_s^{q'}(Q_j)$ and, moreover,  by \eqref{C_{(s)}} and the H\"older inequality,
  we have
  \begin{align*}
    \lf(\fint_{Q_j}\lf|g_j\r|^{q'}\r)^\frac1{q'}
    &\le\lf(\fint_{Q_j}\lf|\widetilde{g_j}\r|^{q'}\r)^\frac1{q'}
      +\lf[\fint_{Q_j}\lf|P_{Q_j}^{(s)}(\widetilde{g_j})\r|^{q'}\r]^\frac1{q'}
     \le\lf(\fint_{Q_j}\lf|\widetilde{g_j}\r|^{q'}\r)^\frac1{q'}
      +C_{(s)}\fint_{Q_j}\lf|\widetilde{g_j}\r|\\
    &\le\lf(\fint_{Q_j}\lf|\widetilde{g_j}\r|^{q'}\r)^\frac1{q'}
      +C_{(s)}\lf(\fint_{Q_j}\lf|\widetilde{g_j}\r|^{q'}\r)^\frac1{q'}
     \le1+C_{(s)},
  \end{align*}
  where $C_{(s)}\in[1,\fz)$ is a constant same as in \eqref{C_{(s)}}.
  Consequently, for any $j\in\{1,\dots,N\}$, there exists $g_j\in L_s^{q'}(Q_j)$ with
  \begin{align}\label{q' morm of g_j}
    \lf(\fint_{Q_j}\lf|g_j\r|^{q'}\r)^\frac1{q'}\le1+C_{(s)}
  \end{align}
    such that
  \begin{align}\label{norm representation}
    \lf[\fint_{Q_j}\lf|f-P_{Q_j}^{(s)}(f)\r|^q\r]^{\frac1q}
    \le2\fint_{Q_j}fg_j.
  \end{align}
  Applying Lemma \ref{Jensen equ} with $a_j:=|Q_j|$ and
  $b_j:=[\fint_{Q_j}|f-P_{Q_j}^{(s)}(f)|^q]^{1/q}$,
  we then know that there exist positive numbers
  $\{\xi_j\}_{j=1}^N$ such that
  \begin{align}\label{lambda_j(1)}
    \sum_{j=1}^N \lf|Q_j\r|^{1+p'\alpha}\xi_j^{p'}=1
  \end{align}
  and
  \begin{align}\label{lambda_j(2)}
    \lf\{\sum_{j=1}^N\lf|Q_j\r|\lf[\lf|Q_j\r|^{-\alpha}\lf\{\fint_{Q_j}
       \lf|f-P_{Q_j}^{(s)}(f)\r|^q\r\}^{\frac 1q}\r]^p\r\}^{\frac 1p}
    =\sum_{j=1}^N \lf|Q_j\r|\xi_j\lf[\fint_{Q_j}\lf|f-P_{Q_j}^{(s)}(f)\r|^q\r]^{\frac1q}.
  \end{align}
  Let
  $$g(x):=
  \begin{cases}
    \displaystyle{\sum_{j=1}^N \xi_j g_j(x)},&\displaystyle{\forall\,x\in\bigcup_{j=1}^N Q_j,}\\
    0,&\displaystyle{\forall\,x\in \mathcal{X}\backslash\bigcup_{j=1}^N Q_j.}\\
  \end{cases}$$
  Then $g\in HK_{(p',q',s)_\alpha}^{\mathrm{fin}}(\mathcal{X})$
  because $g_j\in L_s^{q'}(Q_j)$ for any $j\in\{1,\dots,N\}$.
  By this fact, \eqref{lambda_j(2)}, \eqref{norm representation} and \eqref{restriction},
  we have
  \begin{align}\label{finite sum inequ}
    &\lf\{\sum_{j=1}^N\lf|Q_j\r|\lf[\lf|Q_j\r|^{-\alpha}\lf\{\fint_{Q_j}
      \lf|f-P_{Q_j}^{(s)}(f)\r|^q\r\}^{\frac 1q}\r]^p\r\}^{\frac 1p}\\
    &\quad=\sum_{j=1}^N \lf|Q_j\r|\xi_j
       \lf[\fint_{Q_j}\lf|f-P_{Q_j}^{(s)}(f)\r|^q\r]^{\frac1q}
       \le 2\sum_{j=1}^N \lf|Q_j\r|\xi_j\fint_{Q_j}fg_j\noz\\
    &\quad =2\int_{\bigcup_j Q_j}f\sum_{j=1}^N \xi_j g_j
     =2\int_{\mathcal{X}}fg=2\mathcal{L}(g)\notag.
  \end{align}
  For any $j\in\{1,\dots,N\}$, let
  $\widetilde{g_j}:=\frac{\xi_j |Q_j|^{1/q'-1/p'-\alpha}}
  {\|\xi_j g_j\|_{L^{q'}(Q_j)}}g_j$. Then $\{\widetilde{g_j}\}_{j=1}^N$
  are $(p',q',s)_\alpha$-atoms and
  $$\sum_{j=1}^N \xi_j g_j= \sum_{j=1}^N \frac{\|\xi_j g_j\|_{L^{q'}(Q_j)}}
  {|Q_j|^{1/q'-1/p'-\alpha}}\widetilde{g_j}.$$
  From this,  the fact that $g$ is a $(p',q',s)_\alpha$-polymer,
  \eqref{q' morm of g_j} and \eqref{lambda_j(1)}, it follows that
  \begin{align}\label{norm of g}
    \|g\|_{HK_{(p',q',s)_\alpha}(\mathcal{X})}
    &\le\|g\|_{\widetilde{HK}_{(p',q',s)_\alpha}(\mathcal{X})}
      =\lf\|\sum_{j=1}^N \xi_j g_j\r\|_{\widetilde{HK}_{(p',q',s)_\alpha}(\mathcal{X})}\\
     &\le \lf\{\sum_{j=1}^N \lf[ \frac{\|\xi_j g_j\|_{L^{q'}(Q_j)}}{|Q_j|^{1/q'-1/p'-\alpha}}\r]^{p'} \r\}^\frac1{p'}
     = \lf[\sum_{j=1}^N |Q_j|^{1+p'\alpha}\xi_j^{p'}
     \lf(\fint_{Q_j}\lf|g_j\r|^{q'}\r)^{\frac{p'}{q'}}\r]^\frac1{p'}\notag\\
    &\le\lf[1+C_{(s)}\r]\sum_{j=1}^N |Q_j|^{1+p'\alpha}\xi_j^{p'}
    =1+C_{(s)}\notag.
  \end{align}
  Combining \eqref{finite sum inequ} and \eqref{norm of g}, we obtain
  \begin{align*}
    \lf\{\sum_{j=1}^N|Q_j|\lf[|Q_j|^{-\alpha}\lf\{\fint_{Q_j}\lf|f-P_{Q_j}^{(s)}(f)\r|^q\r\}
      ^{\frac 1q}\r]^p\r\}^{\frac 1p}\le2\mathcal{L}(g)
    \le2\lf[1+C_{(s)}\r]\|\mathcal{L}\|_{(HK_{(p',q',s)_\alpha}(\mathcal{X}))^\ast}.
  \end{align*}
  Thus, \eqref{finite sum} holds true with constant $C:=2[1+C_{(s)}]$
  independent of $N$ and hence
  \begin{align}\label{norm of f}
    \|f\|_{JN_{(p,q,s)_\alpha}(\mathcal{X})}
    \le2\lf[1+C_{(s)}\r]\|\mathcal{L}\|_{(HK_{(p',q',s)_\alpha}(\mathcal{X}))^\ast}.
  \end{align}
On the other hand, by \eqref{restriction} and Lemma \ref{dense},
we know that $\mathcal{L}=\mathcal{L}_f$,
where $\mathcal{L}_f$ is the linear functional induced by $f$ in Theorem \ref{duality}(i).
From this and the above proved Theorem \ref{duality}(i), we deduce that
$$\|\mathcal{L}\|_{(HK_{(p',q',s)_\alpha}(\mathcal{X}))^\ast}
\lesssim\|f\|_{JN_{(p,q,s)_\alpha}(\mathcal{X})}.$$
By this and \eqref{norm of f}, we obtain
$\|\mathcal{L}\|_{(HK_{(p',q',s)_\alpha}(\mathcal{X}))^\ast}
\sim\|f\|_{JN_{(p,q,s)_\alpha}(\mathcal{X})}$, which completes the proof of
Theorem \ref{duality}(ii) and hence of Theorem \ref{duality}.
\end{proof}

Next, we show Proposition \ref{JNpq=Lq} and Corollary \ref{HKqq=Lq}.
\begin{proof}[Proof of Proposition \ref{JNpq=Lq}]
Let $p\in(1,\fz)$, $q\in[p,\fz)$ and $f\in L^1_{\loc}(Q_0)$. Then,
for any disjoint cubes $\{Q_i\}_i$ in $Q_0$ and $m\in \mathcal{P}_s(Q_0)$,
by the Jensen inequality and \eqref{C_{(s)}}, we have
\begin{align*}
\sum_i\frac{|Q_i|}{|Q_0|}\lf[ \fint_{Q_i}\lf|f-P_{Q_i}^{(s)}(f) \r|^q \r]^\frac pq
&\le \lf[\sum_i\frac{|Q_i|}{|Q_0|}\fint_{Q_i}\lf|f-P_{Q_i}^{(s)}(f) \r|^q \r]^\frac pq\\
&\le \lf\{\sum_i\frac{|Q_i|}{|Q_0|}2^{q-1}\lf[\fint_{Q_i}\lf|f+m \r|^q
  +\fint_{Q_i}\lf|P_{Q_i}^{(s)}(f)+m \r|^q \r] \r\}^\frac pq\\
&= \lf\{\sum_i\frac{|Q_i|}{|Q_0|}2^{q-1}\lf[\fint_{Q_i}\lf|f+m \r|^q
  +\fint_{Q_i}\lf|P_{Q_i}^{(s)}(f+m) \r|^q \r] \r\}^\frac pq\\
&\le \lf\{\sum_i\frac{|Q_i|}{|Q_0|}2^{q-1}\lf[\fint_{Q_i}\lf|f+m \r|^q
  +C_{(s)}\lf(\fint_{Q_i}\lf|f+m\r|\r)^q \r] \r\}^\frac pq\\
&\le \lf\{\sum_i\frac{|Q_i|}{|Q_0|}2^{q-1}\lf[\fint_{Q_i}\lf|f+m \r|^q
  +C_{(s)}\fint_{Q_i}\lf|f+m\r|^q \r] \r\}^\frac pq\\
&=\lf\{\sum_i\frac{|Q_i|}{|Q_0|}2^{q-1}\lf[1+C_{(s)}\r]\fint_{Q_i}\lf|f+m \r|^q \r\}^\frac pq\\
&=2^{p-\frac pq}\lf[1+C_{(s)}\r]^\frac pq\|f+m\|_{L^q(Q_0,|Q_0|^{-1}dx)},
\end{align*}
which implies that
$$|Q_0|^{-\frac1p}\|f\|_{JN_{(p,q,s)_0}(Q_0)}
\le 2^{p-\frac pq}\lf[1+C_{(s)}\r]^\frac pq \|f\|_{L^q(Q_0,|Q_0|^{-1}dx)/\mathcal{P}_s(Q_0)}$$
and hence $ L^q(Q_0,|Q_0|^{-1}dx)/\mathcal{P}_s(Q_0)\subset|Q_0|^{-\frac1p} JN_{(p,q,s)_0}(Q_0)$.

On the other hand, we know that
\begin{align*}
\|f\|_{L^q(Q_0,|Q_0|^{-1}dx)/\mathcal{P}_s(Q_0)}
&=\inf_{m\in\mathcal{P}_s(Q_0)}\|f+m\|_{L^q(Q_0,|Q_0|^{-1}dx)}
 \le \lf\|f-P_{Q_0}^{(s)}(f) \r\|_{L^q(Q_0,|Q_0|^{-1}dx)}\\
&=\lf[\fint_{Q_0}\lf|f-P_{Q_0}^{(s)}(f) \r|^q \r]^\frac1q
 =|Q_0|^{-\frac1p}\lf\{|Q_0|\lf[\fint_{Q_0}\lf|f-P_{Q_0}^{(s)}(f) \r|^q \r]^\frac pq \r\}^\frac1p\\
&\le |Q_0|^{-\frac1p}\|f\|_{JN_{(p,q,s)_0}(Q_0)},
\end{align*}
which implies that
$|Q_0|^{-\frac1p} JN_{(p,q,s)_0}(Q_0)\subset L^q(Q_0,|Q_0|^{-1}dx)/\mathcal{P}_s(Q_0)$.
This finishes the proof of Proposition \ref{JNpq=Lq}.
\end{proof}

\begin{proof}[Proof of Corollary \ref{HKqq=Lq}]
Let $p\in(1,\fz)$, $q\in[p,\fz)$ and  $g\in L^{q'}_s(Q_0,|Q_0|^{q'-1}dx)$.
Then $\widetilde{g}:=\frac{|Q_0|^{1/q'-1/p'}}{\|g\|_{L^{q'}(Q_0)}}g$
is a $(p',q',s)_0$-atom.
Thus, we obtain
\begin{align*}
\|g\|_{HK_{(p',q',s)_0}(Q_0)}\le \frac{\|g\|_{L^{q'}(Q_0)}}{|Q_0|^{\frac1{q'}-\frac1{p'}}}
=\frac{|Q_0|^{\frac1{q'}-1}\|g\|_{L^{q'}_s(Q_0,|Q_0|^{q'-1}dx)}}{|Q_0|^{\frac1{q'}-\frac1{p'}}}
=|Q_0|^{-\frac1p}\|g\|_{L^{q'}_s(Q_0,|Q_0|^{q'-1}dx)},
\end{align*}
which implies that $L^{q'}_s(Q_0,|Q_0|^{q'-1}dx)\subset |Q_0|^\frac1p HK_{(p',q',s)_0}(Q_0)$.

On the other hand, for any $g\in HK_{(p',q',s)_0}(Q_0)$, by Theorem \ref{duality}
and Proposition \ref{JNpq=Lq}, we know that
\begin{align*}
\|g\|_{L^{q'}(Q_0,|Q_0|^{q'-1}dx)}&=\sup_{\|h\|_{L^q(Q_0,|Q_0|^{q'-1}dx)}\le1}|\langle g,h\rangle|
\le \sup_{\|h\|_{L^q(Q_0,|Q_0|^{q'-1}dx)/\mathcal{P}_s(Q_0)}\le1}|\langle g,h\rangle|\\
&\le \sup_{\|h\|_{|Q_0|^{-1/p} JN_{(p,q,s)_0}(Q_0)}
      \le2^{p- p/q}[1+C_{(s)}]^{p/q}}|\langle g,h\rangle|\\
&\lesssim \sup_{\|h\|_{|Q_0|^{-1/p} JN_{(p,q,s)_0}(Q_0)}
      \le2^{p- p/q}[1+C_{(s)}]^{p/q}}
      \|g\|_{|Q_0|^{1/p} HK_{(p',q',s)_0}(Q_0)}\|h\|_{|Q_0|^{-1/p} JN_{(p,q,s)_0}(Q_0)}\\
&\lesssim \|g\|_{|Q_0|^{1/p} HK_{(p',q',s)_0}(Q_0)},
\end{align*}
which implies that $|Q_0|^\frac1p HK_{(p',q',s)_0}(Q_0)\subset L^{q'}_s(Q_0,|Q_0|^{q'-1}dx)$.
This finishes the proof of Corollary \ref{HKqq=Lq}
\end{proof}

\section{Proofs of Proposition \ref{JNpqa=JNp1a}, Theorem \ref{JohnNirenberg},
Propositions \ref{HKuvb=HKu8b} and \ref{b}}\label{section4}

In this section, we prove Propositions \ref{JNpqa=JNp1a} and \ref{HKuvb=HKu8b},
Theorem \ref{JohnNirenberg} and Proposition \ref{b}.
We first give the proof of Proposition \ref{b}.

\begin{proof}[Proof of Proposition \ref{b}]
Let $p\in (1,\fz)$, $q\in [1,\fz)$, $s\in\zz_+$,
$\az\in [0,\frac{s+1}{n}]$, $\kappa=\{p,q,\alpha+\frac1q-\frac1p,s\}$
and $f\in L^1_{\loc}([0,1]^n)$. Then,
for any cube $Q\subset [0,1]^n$ and $m\in\mathcal{P}_s(Q)$,
by \eqref{C_{(s)}} and the H\"{o}lder inequality, we have
\begin{align*}
    \lf[\fint_{Q}\lf|f-P_{Q}^{(s)}(f)\r|^q\r]^{\frac 1q}
    &\le\lf[\fint_{Q}\lf|f-m\r|^q\r]^{\frac 1q}
        +\lf[\fint_{Q}\lf|m-P_{Q}^{(s)}(f)\r|^q\r]^{\frac 1q}\\
    &=\lf[\fint_{Q}\lf|f-m\r|^q\r]^{\frac 1q}
        +\lf[\fint_{Q}\lf|P_{Q}^{(s)}(m-f)\r|^q\r]^{\frac 1q}\\
    &\le\lf(\fint_{Q}\lf|f-m\r|^q\r)^{\frac 1q}
        +C_{(s)}\fint_{Q}\lf|m-f\r|
    \le\lf[1+C_{(s)}\r]\lf(\fint_{Q}\lf|f-m\r|^q\r)^{\frac 1q},
\end{align*}
where $C_{(s)}\in[1,\fz)$ is a constant same as in \eqref{C_{(s)}}.
Thus,
$$\lf[\fint_{Q}\lf|f-P_{Q}^{(s)}(f)\r|^q\r]^{\frac 1q}
  \sim\inf_{m\in\mathcal{P}_s(Q)}\lf(\fint_{Q}\lf|f-m\r|^q\r)^{\frac 1q},
  \quad\forall\,Q\subset [0,1]^n$$
and hence
$$\|f\|_{JN_{(p,q,s)_\alpha}([0,1]^n)}\sim
\sup\lf\{\sum_i|Q_i|\lf[|Q_i|^{-\alpha}
  \inf_{m\in\mathcal{P}_s(Q_i)}\lf(\fint_{Q_i}\lf|f-m\r|^q\r)
     ^{\frac 1q}\r]^p\r\}^{\frac 1p}.$$
This implies that $JN_{(p,q,s)_\alpha}([0,1]^n)$ and
$V_\kappa([0,1]^n)$ coincide with equivalent norms, which completes the proof
of Proposition \ref{b}.
\end{proof}

Recall that, for any closed cube $Q$, the \emph{dyadic system} $\mathcal{D}_Q$ on $Q$
is defined by setting
$$\mathcal{D}_Q:=\bigcup_{\ell\in\zz_+}\mathcal{D}_Q^{(\ell)},$$
where, for any $\ell\in\zz_+$,
\begin{align*}
\mathcal{D}_Q^{(\ell)}:=&\lf\{(x_1,\dots,x_n)\in\rn:\,\,
\mathrm{for\,\,any\,\,}i\in\{1,\dots,n\},\,\,x_i\in\lf[a_i+k_i2^{-\ell}l(Q),a_i+(k_i+1)2^{-\ell}l(Q)\r)\r.\\
&\quad\quad\quad\lf.\,\mathrm{when}\,\,
k_i\in\{0,1,\dots,2^\ell-2\}\ \mathrm{or}\ x_i\in\lf[a_i+(1-2^{-\ell})l(Q),a_i+l(Q)\r]\r\}
\end{align*}
and $(a_1,\dots,a_n)$ is a left lower vertex of $Q$ which means that, for any $(x_1,\dots,x_n)\in Q$,
$x_i\ge a_i$ for any $i\in\{1,\dots,n\}$.
In what follows, for any cube $Q\subset\rn$, $\mathcal{M}_{Q}^{(d)}$ denotes
the \emph{dyadic maximal function} related to the dyadic system $\mathcal{D}_Q$ on $Q$, namely,
for any $f\in L^1(Q)$ and $x\in Q$,
$$\mathcal{M}_{Q}^{(d)}(f)(x):=\sup_{Q_{(x)}\ni x}\fint_{Q_{(x)}}|f(y)|\,dy,$$
where the supremum is taken over all dyadic cubes $Q_{(x)}\in \mathcal{D}_Q$ containing $x$.
The following Calder\'{o}n--Zygmund decomposition \cite[p.\,150, Lemma 1]{Stein93}
is needed in the proofs of Theorem \ref{JohnNirenberg} and Proposition \ref{HKuvb=HKu8b}.

\begin{lemma}\label{CZdecomposition}
  Let a closed cube $Q_0\subsetneqq\rn$, $f\in L^1(Q_0)$ and $\lambda\geq\fint_{Q_0}|f|$.
  Then there exist disjoint dyadic cubes $\{Q_k\}_k\subset \mathcal{D}_{Q_0}$ such that
 \begin{enumerate}
  \item[{\rm(i)}]$ \lf\{ x\in Q_0:\,\,\mathcal{M}_{Q_0}^{(d)}(f)(x)
      >\lambda \r\}=\bigcup_k Q_k$;
  \item[{\rm(ii)}]$\lambda<\fint_{Q_k}|f|\le2^n\lambda,\quad \forall\,k$;
  \item[{\rm(iii)}]$\lf| \lf\{ x\in Q_0:\,\,\mathcal{M}_{Q_0}^{(d)}(f)(x)
      >\lambda \r\} \r|\le \frac1\lambda\int_{\{ x\in Q_0:\,\,
      \mathcal{M}_{Q_0}^{(d)}(f)(x)>\lambda \}}|f|$;
  \item[{\rm(iv)}]$f(x)\le \lambda\quad$  almost every $x\in Q_0\backslash\bigcup_k Q_k$.
  \end{enumerate}
\end{lemma}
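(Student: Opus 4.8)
The plan is to run the standard dyadic stopping-time construction relative to the system $\mathcal{D}_{Q_0}$. Since $\fint_{Q_0}|f|\le\lambda$ by hypothesis, $Q_0$ is never selected; descending through the dyadic tree, I would call a cube $Q\in\mathcal{D}_{Q_0}$ a \emph{stopping cube} if $\fint_Q|f|>\lambda$ while its dyadic parent $\widehat Q$ satisfies $\fint_{\widehat Q}|f|\le\lambda$. Equivalently, $\{Q_k\}_k$ is the collection of all maximal (with respect to set inclusion) cubes in $\mathcal{D}_{Q_0}$ whose average of $|f|$ exceeds $\lambda$; such maximal cubes exist because every dyadic cube $Q$ with $\fint_Q|f|>\lambda$ has only finitely many dyadic ancestors up to $Q_0$, among which the outermost one still having average exceeding $\lambda$ is maximal (any dyadic cube strictly containing it is an ancestor of $Q$ of smaller generation, hence has average $\le\lambda$). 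Maximality makes the $\{Q_k\}_k$ pairwise disjoint, and (ii) is then immediate: $\lambda<\fint_{Q_k}|f|$ is the selection criterion, and, since $\widehat{Q_k}$ was not selected, $\fint_{Q_k}|f|\le\frac{|\widehat{Q_k}|}{|Q_k|}\fint_{\widehat{Q_k}}|f|=2^n\fint_{\widehat{Q_k}}|f|\le2^n\lambda$.

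For (i), the inclusion $\bigcup_k Q_k\subset\{x\in Q_0:\mathcal{M}_{Q_0}^{(d)}(f)(x)>\lambda\}$ is clear, since for $x\in Q_k$ the cube $Q_k$ itself is an admissible cube in the supremum defining $\mathcal{M}_{Q_0}^{(d)}(f)(x)$. For the reverse inclusion, if $\mathcal{M}_{Q_0}^{(d)}(f)(x)>\lambda$, choose $Q\in\mathcal{D}_{Q_0}$ with $x\in Q$ and $\fint_Q|f|>\lambda$; then $Q\ne Q_0$, so the topmost ancestor of $Q$ with average $>\lambda$ is one of the $Q_k$, and $x\in Q\subset Q_k$. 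Item (iii) follows by summing: using the disjointness of the $\{Q_k\}_k$, part (i), and the lower bound in (ii),
$$\left|\left\{x\in Q_0:\,\mathcal{M}_{Q_0}^{(d)}(f)(x)>\lambda\right\}\right|=\sum_k|Q_k|<\frac1\lambda\sum_k\int_{Q_k}|f|=\frac1\lambda\int_{\bigcup_k Q_k}|f|,$$
and the last integral equals the one over $\{x\in Q_0:\mathcal{M}_{Q_0}^{(d)}(f)(x)>\lambda\}$, again by (i).

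Finally, for (iv) I would note that, by (i), $x\in Q_0\setminus\bigcup_k Q_k$ exactly means $\mathcal{M}_{Q_0}^{(d)}(f)(x)\le\lambda$, and then invoke the dyadic Lebesgue differentiation theorem: for almost every $x\in Q_0$ the averages of $|f|$ over the generation-$\ell$ dyadic cube in $\mathcal{D}_{Q_0}$ containing $x$ converge to $|f(x)|$ as $\ell\to\infty$, so $|f(x)|\le\mathcal{M}_{Q_0}^{(d)}(f)(x)$ almost everywhere; hence $f(x)\le|f(x)|\le\lambda$ for almost every $x\in Q_0\setminus\bigcup_k Q_k$. The construction is classical and there is no genuine obstacle; the only points deserving a line of care are that $\mathcal{D}_{Q_0}$ is honestly nested — which is precisely what the half-open convention (with the right-closed modification on the outer faces of $Q_0$) in its definition secures, so ``parent'' and ``maximal ancestor'' make sense — and the appeal to differentiation in (iv).
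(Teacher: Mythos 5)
Your proof is correct and is the standard dyadic Calder\'on--Zygmund stopping-time construction (maximal dyadic cubes with average exceeding $\lambda$, bound the average by comparing with the parent, sum over disjoint cubes for the weak bound, and dyadic Lebesgue differentiation for the complement). The paper gives no proof of its own but cites the lemma from Stein's book, where precisely this argument is carried out, so your approach matches the intended one.
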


Now, we prove Theorem \ref{JohnNirenberg} and begin with the
following \emph{good-$\lambda$ inequality}. We employ some ideas used in the proof of
\cite[Lemma 4.5]{ABKY11} with suitable modifications.

\begin{lemma}\label{goodlambda}
  Let $p\in(1,\infty)$, $s\in\zz_+$, $\theta\in(0,2^{-n})$, a closed cube
  $Q_0\subsetneqq\rn$ and $f\in JN_{(p,1,s)_0}(Q_0)$.
  Then, for any $\lambda>\frac1\theta\fint_{Q_0}|f-P_{Q_0}^{(s)}(f)|$,
  \begin{align}\label{goodlambda1}
    &\lf| \lf\{ x\in Q_0:\,\,\mathcal{M}_{Q_0}^{(d)}\lf(f-P_{Q_0}^{(s)}(f)\r)(x)
      >\lambda \r\} \r|\\
    &\quad \le \frac{1}{1-2^n\theta C_{(s)}}\frac{\|f\|_{JN_{(p,1,s)_0}(Q_0)}}{\lambda}
      \lf| \lf\{ x\in Q_0:\,\,\mathcal{M}_{Q_0}^{(d)}\lf(f-P_{Q_0}^{(s)}(f)\r)(x)
      >\theta\lambda \r\} \r|^\frac1{p'},\notag
  \end{align}
  where $C_{(s)}$ is the positive constant same as in \eqref{C_{(s)}}.
\end{lemma}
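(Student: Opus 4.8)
\textbf{Proof proposal for Lemma \ref{goodlambda}.}

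The plan is to apply the Calder\'on--Zygmund decomposition at the level $\theta\lambda$ to the function $F:=f-P_{Q_0}^{(s)}(f)$, producing disjoint dyadic cubes $\{Q_k\}_k$ with $\bigcup_k Q_k=\{x\in Q_0:\,\mathcal{M}_{Q_0}^{(d)}(F)(x)>\theta\lambda\}$. The hypothesis $\lambda>\frac1\theta\fint_{Q_0}|F|$ guarantees $\theta\lambda>\fint_{Q_0}|F|$, so Lemma \ref{CZdecomposition} indeed applies with the level $\theta\lambda$. First I would observe that, by property (iv) of the decomposition, $\mathcal{M}_{Q_0}^{(d)}(F)(x)\le\theta\lambda<\lambda$ for almost every $x\notin\bigcup_k Q_k$, so the set on the left-hand side of \eqref{goodlambda1} is contained in $\bigcup_k Q_k$ up to a null set; hence it suffices to estimate, for each $k$, the measure of $\{x\in Q_k:\,\mathcal{M}_{Q_0}^{(d)}(F)(x)>\lambda\}$ and sum.

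The next step is a localization inside each $Q_k$. For $x\in Q_k$, any dyadic cube $Q_{(x)}\in\mathcal{D}_{Q_0}$ containing $x$ with $\fint_{Q_{(x)}}|F|>\lambda>\theta\lambda$ must be contained in $Q_k$ (by the standard stopping-time/maximality argument: the dyadic parent of such a cube has average $\le\theta\lambda$ only down to the selected generation, and $Q_k$ is exactly the maximal dyadic cube with average exceeding $\theta\lambda$). Therefore, for $x\in Q_k$,
\begin{align*}
\mathcal{M}_{Q_0}^{(d)}(F)(x)=\max\lf\{\mathcal{M}_{Q_k}^{(d)}(F{\mathbf 1}_{Q_k})(x),\ \fint_{Q_k}|F|\r\},
\end{align*}
and since $\fint_{Q_k}|F|\le 2^n\theta\lambda<\lambda$ when $\theta<2^{-n}$, on $Q_k$ the condition $\mathcal{M}_{Q_0}^{(d)}(F)(x)>\lambda$ forces $\mathcal{M}_{Q_k}^{(d)}(F{\mathbf 1}_{Q_k})(x)>\lambda$. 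Now I would replace $F$ by $F-P_{Q_k}^{(s)}(F)$ on $Q_k$ at the cost of an error: $\fint_{Q_k}|P_{Q_k}^{(s)}(F)|\le C_{(s)}\fint_{Q_k}|F|\le 2^n\theta C_{(s)}\lambda$ by \eqref{C_{(s)}}, so
\begin{align*}
\mathcal{M}_{Q_k}^{(d)}\lf(F-P_{Q_k}^{(s)}(F)\r)(x)\ge\mathcal{M}_{Q_k}^{(d)}(F{\mathbf 1}_{Q_k})(x)-2^n\theta C_{(s)}\lambda>(1-2^n\theta C_{(s)})\lambda
\end{align*}
on the relevant set (note $1-2^n\theta C_{(s)}>0$ since we may also assume $\theta<(2^nC_{(s)})^{-1}$, which is implicit in the statement via the denominator; if not, the inequality is vacuous). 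Applying the weak-type $(1,1)$ bound for $\mathcal{M}_{Q_k}^{(d)}$ and the defining property $\fint_{Q_k}|F-P_{Q_k}^{(s)}(F)|\,$ being controlled in the $JN$ sense, I get
\begin{align*}
\lf|\lf\{x\in Q_k:\,\mathcal{M}_{Q_0}^{(d)}(F)(x)>\lambda\r\}\r|\le\frac{|Q_k|}{(1-2^n\theta C_{(s)})\lambda}\fint_{Q_k}\lf|F-P_{Q_k}^{(s)}(F)\r|=\frac{1}{(1-2^n\theta C_{(s)})\lambda}|Q_k|\fint_{Q_k}\lf|f-P_{Q_k}^{(s)}(f)\r|,
\end{align*}
using $F-P_{Q_k}^{(s)}(F)=f-P_{Q_k}^{(s)}(f)$ since $P_{Q_k}^{(s)}$ annihilates the degree-$s$ polynomial $P_{Q_0}^{(s)}(f)$ modulo itself, i.e.\ $P_{Q_k}^{(s)}(f-P_{Q_0}^{(s)}(f))=P_{Q_k}^{(s)}(f)-P_{Q_0}^{(s)}(f)$, so the difference $f-P_{Q_0}^{(s)}(f)-P_{Q_k}^{(s)}(f-P_{Q_0}^{(s)}(f))=f-P_{Q_k}^{(s)}(f)$.

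Finally I would sum over $k$ and apply H\"older's inequality with exponents $p$ and $p'$, exploiting the disjointness of $\{Q_k\}_k$:
\begin{align*}
\sum_k|Q_k|\fint_{Q_k}\lf|f-P_{Q_k}^{(s)}(f)\r|
=\sum_k|Q_k|^{1/p'}\lf[|Q_k|\lf(\fint_{Q_k}\lf|f-P_{Q_k}^{(s)}(f)\r|\r)^p\r]^{1/p}
\le\lf(\sum_k|Q_k|\r)^{1/p'}\|f\|_{JN_{(p,1,s)_0}(Q_0)}.
\end{align*}
Since $\sum_k|Q_k|=|\bigcup_k Q_k|=|\{x\in Q_0:\,\mathcal{M}_{Q_0}^{(d)}(F)(x)>\theta\lambda\}|$, combining the last two displays with the inclusion from the first paragraph yields \eqref{goodlambda1}. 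The main obstacle is the localization step: one must argue carefully that the dyadic maximal function $\mathcal{M}_{Q_0}^{(d)}$ restricted to $Q_k$ decomposes as claimed, i.e.\ that only dyadic subcubes of $Q_k$ (plus $Q_k$ itself) can realize averages above $\lambda$ for points in $Q_k$; this is where the dyadic structure and the stopping-time selection of $\{Q_k\}_k$ from Lemma \ref{CZdecomposition} are essential, and where the constant $2^n$ enters through $\fint_{Q_k}|F|\le 2^n\theta\lambda$.
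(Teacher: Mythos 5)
Your proof is correct and follows essentially the same route as the paper's: Calder\'on--Zygmund stopping at level $\theta\lambda$, localization onto each stopping cube $Q_k$ via the maximality of $Q_k$, replacing $f$ by $f-P_{Q_k}^{(s)}(f)$ at the cost of an error $\le 2^n\theta C_{(s)}\lambda$ controlled by \eqref{C_{(s)}}, then the dyadic weak-$(1,1)$ bound on $Q_k$, and finally a summation and H\"older step. The only cosmetic differences are that you work directly with $F=f-P_{Q_0}^{(s)}(f)$ rather than normalizing $P_{Q_0}^{(s)}(f)=0$, and you invoke the weak-$(1,1)$ estimate outright where the paper splits into two cases (one trivial, one a second CZ application); you also correctly flag that the lemma is only non-vacuous when $\theta<(2^nC_{(s)})^{-1}$, which is consistent with the paper's later choice $\theta=2^{-(n+1)}C_{(s)}^{-1}$.
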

\begin{proof}
  Without loss of generality, we may assume that $P_{Q_0}^{(s)}(f)=0$, otherwise
  we could replace $f$ by $g:=f-P_{Q_0}^{(s)}(f)$ because $f$ coincides with $g$ in
  $JN_{(p,1,s)_0}(Q_0)$. Then, to show this lemma, it suffices to prove that,
  for any $\lambda>\frac1\theta\fint_{Q_0}|f|$,
  \begin{align}\label{goodlambda2}
    &\lf| \lf\{ x\in Q_0:\,\,\mathcal{M}_{Q_0}^{(d)}(f)(x)
      >\lambda \r\} \r|\\
    &\quad \le \frac{1}{1-2^n\theta C_{(s)}}\frac{\|f\|_{JN_{(p,1,s)_0}(Q_0)}}{\lambda}
      \lf| \lf\{ x\in Q_0:\,\,\mathcal{M}_{Q_0}^{(d)}(f)(x)
      >\theta\lambda \r\} \r|^\frac1{p'}.\notag
  \end{align}
  Applying Lemma \ref{CZdecomposition} to $f$ on $Q_0$ at height $\theta\lambda$,
  we can find disjoint dyadic cubes $\{Q_k\}_k\subset\mathcal{D}_{Q_0}$ such that
  $\{ x\in Q_0:\,\,\mathcal{M}_{Q_0}^{(d)}(f)(x)>\theta\lambda \}
  =\cup_k Q_k$. Since $\theta\in(0,2^{-n})$, it follows that
  $$\lf\{ x\in Q_0:\,\,\mathcal{M}_{Q_0}^{(d)}(f)(x)>\lambda \r\}
  \subset \lf\{ x\in Q_0:\,\,\mathcal{M}_{Q_0}^{(d)}(f)(x)>\theta\lambda \r\}$$
  and hence
  \begin{align}\label{goodlambda3}
    \lf\{ x\in Q_0:\,\,\mathcal{M}_{Q_0}^{(d)}(f)(x)>\lambda \r\}
    =\bigcup_k \lf\{ x\in Q_k:\,\,\mathcal{M}_{Q_0}^{(d)}(f)(x)>\lambda \r\}.
  \end{align}
  We now claim that, for any $k$,
  \begin{align}\label{goodlambda4}
    \lf\{ x\in Q_k:\,\,\mathcal{M}_{Q_0}^{(d)}(f)(x)>\lambda \r\}
    \subset \lf\{ x\in Q_k:\,\,\mathcal{M}_{Q_0}^{(d)}
    \lf( \lf[f-P_{Q_k}^{(s)}(f) \r]{\mathbf 1}_{Q_k} \r)(x)
      >\lf[1-2^n\theta C_{(s)}\r]\lambda \r\}.
  \end{align}
  Indeed, for any $x\in Q_k$ with $\mathcal{M}_{Q_0}^{(d)}(f)(x)>\lambda$,
  by Lemma \ref{CZdecomposition},
  there exists a dyadic cube $Q_{(x)}\ni x$ in $\mathcal{D}_{Q_0}$ such that
  \begin{align}\label{goodlambda5}
    \fint_{Q_{(x)}} |f|>\lambda.
  \end{align}
  Since $Q_k$ is the maximal dyadic cube satisfying
  $\fint_{Q}|f|>\theta\lambda$, then $Q_{(x)}\subset Q_k$. From this and
  \eqref{goodlambda5}, it follows that
  $$\mathcal{M}_{Q_0}^{(d)}\lf(f{\mathbf 1}_{Q_k}\r)(x)
  \geq\fint_{Q_{(x)}} |f|>\lambda.$$
  By this, \eqref{C_{(s)}} and Lemma \ref{CZdecomposition}(ii), we conclude that
  \begin{align*}
    \lambda&<\mathcal{M}_{Q_0}^{(d)}\lf(f{\mathbf 1}_{Q_k}\r)(x)
    =\sup_{Q\ni x}\fint_{Q}\lf|f(y){\mathbf 1}_{Q_k}(y)\r|\,dy\\
    &\le \sup_{Q\ni x} \lf[ \fint_{Q}\lf|f(y)-P_{Q_k}^{(s)}(f)(y) \r|
      {\mathbf 1}_{Q_k}(y)\,dy+\fint_{Q}\lf|P_{Q_k}^{(s)}(f)(y) \r|
      {\mathbf 1}_{Q_k}(y)\,dy\r]\\
    &\le \mathcal{M}_{Q_0}^{(d)} \lf( \lf[f-P_{Q_k}^{(s)}(f) \r]
      {\mathbf 1}_{Q_k} \r)(x) +C_{(s)} \fint_{Q_k}|f|\\
    &\le \mathcal{M}_{Q_0}^{(d)} \lf( \lf[f-P_{Q_k}^{(s)}(f) \r]
      {\mathbf 1}_{Q_k} \r)(x) +2^n \theta C_{(s)} \lambda.
  \end{align*}
  This finishes the proof of \eqref{goodlambda4}.

  Next, we show that, for any $k$,
  \begin{align}\label{goodlambda6}
    \lf| \lf\{ x\in Q_k:\,\,\mathcal{M}_{Q_0}^{(d)}(f)(x)>\lambda \r\} \r|
    \le \frac{1}{[1-2^n\theta C_{(s)}]\lambda}\int_{Q_k}\lf|f-P_{Q_k}^{(s)}(f) \r|.
  \end{align}
  Indeed, if $\fint_{Q_k}|f-P_{Q_k}^{(s)}(f)|>[1-2^n\theta C_{(s)}]\lambda$, then
  $$\lf| \lf\{ x\in Q_k:\,\,\mathcal{M}_{Q_0}^{(d)}(f)(x)>\lambda \r\} \r|
  \le |Q_k|<\frac{1}{[1-2^n\theta C_{(s)}]\lambda}\int_{Q_k}\lf|f-P_{Q_k}^{(s)}(f) \r|.$$
  If $\fint_{Q_k}|f-P_{Q_k}^{(s)}(f)|\le[1-2^n\theta C_{(s)}]\lambda$,
  then, applying Lemma \ref{CZdecomposition} to $f-P_{Q_k}^{(s)}(f)$ on $Q_k$
  at height $[1-2^n\theta C_{(s)}]\lambda$, we obtain
  $$\lf| \lf\{ x\in Q_k:\,\,\mathcal{M}_{Q_k}^{(d)}
     \lf( \lf[f-P_{Q_k}^{(s)}(f){\mathbf 1}_{Q_k} \r] \r)(x)
     >\lf[1-2^n\theta C_{(s)}\r]\lambda \r\} \r|
    \le \frac{1}{[1-2^n\theta C_{(s)}]\lambda}\int_{Q_k}\lf|f-P_{Q_k}^{(s)}(f) \r|.$$
  From this and \eqref{goodlambda4}, we deduce that
  \begin{align*}
    \lf| \lf\{ x\in Q_k:\,\,\mathcal{M}_{Q_0}^{(d)}(f)(x)>\lambda \r\} \r|
    &\le \lf| \lf\{ x\in Q_k:\,\,\mathcal{M}_{Q_0}^{(d)}\lf(
      \lf[f-P_{Q_k}^{(s)}(f){\mathbf 1}_{Q_k} \r] \r)(x)
      >\lf[1-2^n\theta C_{(s)}\r]\lambda \r\} \r|\\
    &= \lf| \lf\{ x\in Q_k:\,\,\mathcal{M}_{Q_k}^{(d)}\lf(
      \lf[f-P_{Q_k}^{(s)}(f){\mathbf 1}_{Q_k} \r] \r)(x)
      >\lf[1-2^n\theta C_{(s)}\r]\lambda \r\} \r|\\
    &\le \frac{1}{[1-2^n\theta C_{(s)}]\lambda}\int_{Q_k}\lf|f-P_{Q_k}^{(s)}(f) \r|.
  \end{align*}
  Thus, \eqref{goodlambda6} holds true.
  From \eqref{goodlambda3}, \eqref{goodlambda6}, the H\"{o}lder inequality and
  the construction of $\{Q_k\}_k$, we deduce that
  \begin{align*}
    &\lf| \lf\{ x\in Q_0:\,\,\mathcal{M}_{Q_0}^{(d)}(f)(x)>\lambda \r\}\r|\\
    &\quad\le \sum_k \lf| \lf\{ x\in Q_k:\,\,\mathcal{M}_{Q_0}^{(d)}(f)(x)>\lambda \r\}\r|\\
    &\quad\le \sum_k \frac{|Q_k|^{\frac1{p'}}}{[1-2^n\theta C_{(s)}]\lambda}
           \lf[|Q_k|^{\frac1p-1} \int_{Q_k}\lf|f-P_{Q_k}^{(s)}(f) \r|\r]\\
    &\quad\le \frac{1}{[1-2^n\theta C_{(s)}]\lambda} \lf( \sum_k |Q_k| \r)^\frac{1}{p'}
           \lf\{\sum_k|Q_k|^{1-p} \lf[\int_{Q_k}\lf|f-P_{Q_k}^{(s)}(f) \r|\r]^p\r\}^\frac1p\\
    &\quad\le \frac{1}{[1-2^n\theta C_{(s)}]\lambda} \lf| \lf\{
       x\in Q_0:\,\,\mathcal{M}_{Q_0}^{(d)}(f)(x)>\theta\lambda
      \r\} \r|^\frac1{p'} \|f\|_{JN_{(p,1,s)_0}(Q_0)},
  \end{align*}
  which shows that \eqref{goodlambda2} holds true.
  This finishes the proof of Lemma \ref{goodlambda}.
\end{proof}

Next, we show Theorem \ref{JohnNirenberg} via using Lemma \ref{goodlambda}.

\begin{proof}[Proof of Theorem \ref{JohnNirenberg}]
  Without loss of generality, we may assume that $P_{Q_0}^{(s)}(f)=0$ as in the proof
  of Lemma \ref{goodlambda}.
  We first prove \eqref{JohnNirenbergforp1as} for $\alpha=0$.
  Let $\theta:=2^{-(n+1)}C_{(s)}^{-1}$, where $C_{(s)}$ is the same as in \eqref{C_{(s)}},
  and $\eta:=\frac{\|f\|_{JN_{(p,1,s)_0}(Q_0)}}{\theta|Q_0|^{1/p}}$. We show that
  $$\|f\|_{L^{p,\infty}(Q_0)}=\sup_{\lambda>0} \lambda \lf| \{ x\in Q_0:\,\,
  |f(x)|>\lambda \} \r|^\frac1p\lesssim\|f\|_{JN_{(p,1,s)_0}(Q_0)}$$
  by considering the following two cases.

  \emph{Case (i)} $\lambda\le\eta$, namely,
  $\lambda\le\frac{\|f\|_{JN_{(p,1,s)_0}(Q_0)}}{\theta|Q_0|^{1/p}}$.
  In this case, $\lambda|Q_0|^\frac1p\le2^{n+1}C_{(s)}\|f\|_{JN_{(p,1,s)_0}(Q_0)}$
  and hence
  $$\sup_{\lambda\in(0,\eta]} \lambda \lf| \{ x\in Q_0:\,\,|f(x)|>\lambda \} \r|^\frac1p
  \le \sup_{\lambda\in(0,\eta]} \lambda |Q_0|^\frac1p
  \le2^{n+1}C_{(s)}\|f\|_{JN_{(p,1,s)_0}(Q_0)}.$$

  \emph{Case (ii)} $\lambda>\eta$. In this case, by the definition of
  $\|f\|_{JN_{(p,1,s)_0}(Q_0)}$ and $\theta<1$,
  we have
  $$\lambda>\eta=\frac{\|f\|_{JN_{(p,1,s)_0}(Q_0)}}{\theta|Q_0|^{\frac1p}}
  \geq\frac{|Q_0|^\frac1p\fint_{Q_0}|f|}{\theta|Q_0|^{\frac1p}}>\fint_{Q_0}|f|.$$
  Next, we show that
  \begin{align}\label{JNinequ0}
  \sup_{\lambda\in(\eta,\infty)}\lambda
  \lf|\lf\{ x\in Q_0:\,\,\mathcal{M}_{Q_0}^{(d)}(f)(x)>\lambda \r\}\r|^\frac1p
  \lesssim\|f\|_{JN_{(p,1,s)_0}(Q_0)}.
  \end{align}
  Let $j_0$ be the smallest nonnegative integer such that $\theta^{-j}\eta<\lambda$.
  By \eqref{goodlambda2}, $\theta\lambda\le\theta^{-j_0}\eta<\lambda$,
  and Lemma \ref{CZdecomposition}(iii), we have
\begin{align*}
    &\lf|\lf\{ x\in Q_0:\,\,\mathcal{M}_{Q_0}^{(d)}(f)(x)>\lambda \r\}\r|\\
    &\quad\le\lf|\lf\{ x\in Q_0:\,\,\mathcal{M}_{Q_0}^{(d)}(f)(x)>\theta^{-j_0}\eta \r\}\r|\\
    &\quad\le\frac{2\|f\|_{JN_{(p,1,s)_0}(Q_0)}}{\theta^{-j_0}\eta}
         \lf|\lf\{ x\in Q_0:\,\,\mathcal{M}_{Q_0}^{(d)}(f)(x)>\theta^{-j_0+1}\eta \r\}\r|
         ^{(p')^{-1}}\\
    &\quad\le\frac{2\|f\|_{JN_{(p,1,s)_0}(Q_0)}}{\theta^{-j_0}\eta}
         \lf[ \frac{2\|f\|_{JN_{(p,1,s)_0}(Q_0)}}{\theta^{-j_0+1}\eta} \r]^{(p')^{-1}}
         \lf|\lf\{ x\in Q_0:\,\,\mathcal{M}_{Q_0}^{(d)}(f)(x)>\theta^{-j_0+2}\eta \r\}\r|
         ^{(p')^{-2}}\\
    &\quad\le\frac{2\|f\|_{JN_{(p,1,s)_0}(Q_0)}}{\theta^{-j_0}\eta}
         \lf[ \frac{2\|f\|_{JN_{(p,1,s)_0}(Q_0)}}{\theta^{-j_0+1}\eta} \r]^{(p')^{-1}}\cdots
         \lf[ \frac{2\|f\|_{JN_{(p,1,s)_0}(Q_0)}}{\theta^{-1}\eta} \r]^{(p')^{-j_0+1}}\\
    &\quad\quad\times\lf|\lf\{ x\in Q_0:\,\,\mathcal{M}_{Q_0}^{(d)}(f)(x)>
         \eta \r\}\r|^{(p')^{-j_0}}\\
    &\quad\le\frac{2\|f\|_{JN_{(p,1,s)_0}(Q_0)}}{\theta\lambda}
         \lf[ \frac{2\|f\|_{JN_{(p,1,s)_0}(Q_0)}}{\theta^{2}\lambda} \r]^{(p')^{-1}}\cdots
         \lf[ \frac{2\|f\|_{JN_{(p,1,s)_0}(Q_0)}}{\theta^{j_0}\lambda} \r]^{(p')^{-j_0+1}}
         \lf( \frac1\eta\int_{Q_0}|f| \r)^{(p')^{-j_0}}\\
    &\quad=\lf(\frac1\theta\r)^{1+2(p')^{-1}+\cdots+j_0(p')^{-j_0}}
         \lf[ \frac{2\|f\|_{JN_{(p,1,s)_0}(Q_0)}}{\lambda}\r]
               ^{1+(p')^{-1}+(p')^{-2}+\cdots+(p')^{-j_0+1}}
         \lf( \frac1\eta\int_{Q_0}|f| \r)^{(p')^{-j_0}}.
  \end{align*}
  Observe that $|Q_0|^\frac1p\fint_{Q_0}|f|\le\|f\|_{JN_{(p,1,s)_0}(Q_0)}$.
  We then obtain
  $$\frac1\eta\int_{Q_0}|f|=\frac{\theta|Q_0|^{\frac1p}}{\|f\|_{JN_{(p,1,s)_0}(Q_0)}}
  \int_{Q_0}|f|\le\theta|Q_0|.$$
  From this,
  $1+2(p')^{-1}+\cdots+j_0(p')^{-j_0}=p^2[1-(p')^{-j_0}]-pj_0(p')^{-j_0}
  \le p^2$ and $$1+(p')^{-1}+(p')^{-2}+\cdots+(p')^{-j_0}=p\lf[1-(p')^{-j_0}\r],$$
  we deduce that
  \begin{align}\label{JNinequ1}
    &\lf|\lf\{ x\in Q_0:\,\,\mathcal{M}_{Q_0}^{(d)}(f)(x)>\lambda \r\}\r|\\
    &\quad\le\lf(\frac1\theta\r)^{p^2}\lf[ \frac{2\|f\|_{JN_{(p,1,s)_0}(Q_0)}}{\lambda}\r]
      ^{p[1-(p')^{-j_0}]}(\theta|Q_0|)^{(p')^{-j_0}}\notag\\
    &\quad=2^{p[1-(p')^{-j_0}]}\lf(\frac1\theta\r)^{p^2-(p')^{-j_0}}
           \lf[ \frac{\|f\|_{JN_{(p,1,s)_0}(Q_0)}}{\lambda}\r]^p
           \lf[\frac{\lambda|Q_0|^\frac1p}{\|f\|_{JN_{(p,1,s)_0}(Q_0)}}\r]
           ^{p(p')^{-j_0}}\notag\\
    &\quad\le2^{p}\lf(\frac1\theta\r)^{p^2}
           \lf[ \frac{\|f\|_{JN_{(p,1,s)_0}(Q_0)}}{\lambda}\r]^p
           \lf[\frac{\lambda|Q_0|^\frac1p}{\|f\|_{JN_{(p,1,s)_0}(Q_0)}}\r]
           ^{p(p')^{-j_0}}.\notag
  \end{align}
  By the definitions of $\eta$ and $j_0$, we have
  \begin{align}\label{JNinequ2}
    \frac{\lambda|Q_0|^\frac1p}{\|f\|_{JN_{(p,1,s)_0}(Q_0)}}
    =\frac{\lambda}{\theta\eta}\le\frac{\theta^{-j_0-1}\eta}{\theta\eta}
    =\theta^{-j_0-2}.
  \end{align}
  We now claim that, for any $j\in\nn$,
  \begin{align}\label{JNinequ3}
    (j+2)(p')^{-j}\le \max \{p,2\}.
  \end{align}
  Indeed, for any $j\in\zz_+$, let $F(j):=(j+2)(p')^{-j}$.
  Then $F$ attains its maximal value at some $j_1\in\zz_+$.
  If $j_1=0$, then $(j_1+2)(p')^{-j_1}=2\le \max \{p,2\}$. If $j_1\in\nn$,
  then
  $$\frac{F(j_1-1)}{F(j_1)}=\frac{(j_1+1)(p')^{-j_1+1}}{(j_1+2)(p')^{-j_1}}
  \le1,$$
  which implies that $j_1+2\le \frac1{p'-1}+1=p$ and hence
  $(j_1+2)(p')^{-j_1}\le p\le \max \{p,2\}$. This proves \eqref{JNinequ3}.
  Therefore, by \eqref{JNinequ1}, \eqref{JNinequ2}, \eqref{JNinequ3} and
  $\theta=2^{-(n+1)}C_{(s)}^{-1}$, we conclude that
  \begin{align*}
    &\lf|\lf\{ x\in Q_0:\,\,\mathcal{M}_{Q_0}^{(d)}(f)(x)>\lambda \r\}\r|\\
    &\quad\le2^{p}\lf(\frac1\theta\r)^{p^2-p(j_0+2)(p')^{-j_0}}
        \lf[ \frac{\|f\|_{JN_{(p,1,s)_0}(Q_0)}}{\lambda}\r]^p\\
    &\quad\le2^{p}\lf(\frac1\theta\r)^{p^2-p\max \{p,2\}}
        \lf[ \frac{\|f\|_{JN_{(p,1,s)_0}(Q_0)}}{\lambda}\r]^p\\
    &\quad=2^{p+(n+1)(p^2-p\max \{p,2\})}C_{(s)}^{p^2-p\max \{p,2\}}
        \lf[ \frac{\|f\|_{JN_{(p,1,s)_0}(Q_0)}}{\lambda}\r]^p.
  \end{align*}
  This implies that \eqref{JNinequ0} holds true.
  Moreover, by Lemma \ref{CZdecomposition}(iv), we have
  $$\lf\{x\in Q_0:\,\,|f(x)|>\lambda\r\}\subset
  \lf\{ x\in Q_0:\,\,\mathcal{M}_{Q_0}^{(d)}(f)(x)>\lambda \r\}.$$
  From this and \eqref{JNinequ0}, it follows that
  $$\sup_{\lambda\in(\eta,\infty)}\lambda
  \lf|\lf\{ x\in Q_0:\,\,|f(x)|>\lambda \r\}\r|^\frac1p
  \lesssim\|f\|_{JN_{(p,1,s)_0}(Q_0)}.$$
  Therefore, \eqref{JohnNirenbergforp1as} for $\alpha=0$ holds true
  by combining Case (i) and Case (ii)
  and letting $$C_{(n,p,s)}:=\max\lf\{2^{n+1}C_{(s)},
  2^{p+(n+1)(p^2-p\max \{p,2\})}C_{(s)}^{p^2-p\max \{p,2\}}\r\}.$$

  Finally, for any $\alpha\in[0,\infty)$, by \eqref{JohnNirenbergforp1as}
  for $\alpha=0$, we find that
  \begin{align*}
  \lf\|f-P_{Q_0}^{(s)}(f)\r\|_{L^{p,\infty}(Q_0)}
      &\le C_{(n,p,s)} \|f\|_{JN_{(p,1,s)_0}(Q_0)}
      =C_{(n,p,s)} \sup\lf\{\sum_i|Q_i|\lf[
      \fint_{Q_i}\lf|f-P_{Q_i}^{(s)}(f)\r|\r]^p\r\}^{\frac 1p}\\
  &\le C_{(n,p,s)} |Q_0|^\alpha\sup\lf\{\sum_i|Q_i|\lf[|Q_i|^{-\alpha}
      \lf\{\fint_{Q_i}\lf|f-P_{Q_i}^{(s)}(f)\r|\r\}\r]^p\r\}^{\frac 1p}\\
  &\le C_{(n,p,s)} |Q_0|^{\alpha}\|f\|_{JN_{(p,1,s)_\alpha}(Q_0)}.
  \end{align*}
  This finishes the proof of Theorem \ref{JohnNirenberg}.
\end{proof}

Finally, we use Theorem \ref{JohnNirenberg} to prove Proposition \ref{JNpqa=JNp1a}.

\begin{proof}[Proof of Proposition \ref{JNpqa=JNp1a}]
  For any $f\in JN_{(p,q,s)_\alpha}(\mathcal{X})$ with $1\le q<p<\infty$ and
  $\alpha\in[0,\infty)$, by the H\"{o}lder inequality, we obtain
  \begin{align*}
    \|f\|_{JN_{(p,1,s)_\alpha}(\mathcal{X})}
    &=\sup\lf\{\sum_i|Q_i|\lf[|Q_i|^{-\alpha}\lf\{\fint_{Q_i}\lf|f-P_{Q_i}^{(s)}(f)\r|\r\}
      \r]^p\r\}^{\frac 1p}\\
    &\le\sup\lf\{\sum_i|Q_i|\lf[|Q_i|^{-\alpha}\lf\{\fint_{Q_i}\lf|f-P_{Q_i}^{(s)}(f)\r|^q\r\}
      ^{\frac 1q}\r]^p\r\}^{\frac 1p}
    =\|f\|_{JN_{(p,q,s)_\alpha}(\mathcal{X})}.
  \end{align*}
  This shows that $JN_{(p,q,s)_\alpha}(\mathcal{X})
  \subset JN_{(p,1,s)_\alpha}(\mathcal{X})$.

  On the other hand, for any $1\le q<p<\infty$ and cube $Q\subset\rn$,
  by the embedding $L^{p,\infty}(Q)\subset L^q(Q)$ (which is easy to prove; see, for instance,
  \cite[p.\,14, Exercises 1.1.11]{GTM249}) and Theorem \ref{JohnNirenberg},
  we have
  \begin{align}\label{qto1}
  \lf[\fint_{Q}\lf|f-P_{Q}^{(s)}(f)\r|^q\r]^\frac 1q
  \lesssim |Q|^{-\frac1 p}\lf\|f-P_{Q}^{(s)}(f)\r\|_{L^{p,\infty}(Q)}
  \lesssim |Q|^{-\frac1 p}\|f\|_{JN_{(p,1,s)_0}(Q)}.
  \end{align}
  Now, for any given disjoint cubes $\{Q_i\}_i$ in $\mathcal{X}$, there exist disjoint cubes
  $\{Q_{i,j}\}_j\subset Q_i$ such that
  \begin{align}\label{attainJNp1aNorm}
    \|f\|_{JN_{(p,1,s)_0}(Q_i)}
    \lesssim\lf\{\sum_j\lf|Q_{i,j}\r|\lf[\fint_{Q_{i,j}}
       \lf|f-P_{Q_{i,j}}^{(s)}(f)\r|\r]^p\r\}^\frac1p,
    \quad\forall\,i.
  \end{align}
  By \eqref{qto1}, \eqref{attainJNp1aNorm} and $\alpha\in[0,\infty)$, we obtain
  \begin{align*}
    &\sum_i|Q_i|\lf\{|Q_i|^{-\alpha}\lf[\fint_{Q_i}\lf|f-P_{Q_i}^{(s)}(f)\r|^q\r]
      ^{\frac 1q}\r\}^p\\
    &\quad\lesssim\sum_i|Q_i|\lf[|Q_i|^{-\alpha}|Q_i|^{-\frac1 p}\|f\|_{JN_{(p,1,s)_0}(Q_i)}
      \r]^p\\
    &\quad\sim\sum_i|Q_i|^{-\alpha p}\|f\|_{JN_{(p,1,s)_0}(Q_i)}^p
     \lesssim\sum_i|Q_i|^{-\alpha p}\sum_j\lf|Q_{i,j}\r|\lf[\fint_{Q_{i,j}}
       \lf|f-P_{Q_{i,j}}^{(s)}(f)\r|\r]^p\\
    &\quad\lesssim\sum_i\sum_j\lf|Q_{i,j}\r|\lf[\lf|Q_{i,j}\r|^{-\alpha}\fint_{Q_{i,j}}
       \lf|f-P_{Q_{i,j}}^{(s)}(f)\r|\r]^p
    \lesssim\|f\|_{JN_{(p,1,s)_\alpha}(\mathcal{X})}^p.
  \end{align*}
  This implies that $\|f\|_{JN_{(p,q,s)_\alpha}(\mathcal{X})}
  \lesssim\|f\|_{JN_{(p,1,s)_\alpha}(\mathcal{X})}$ and hence
  $JN_{(p,1,s)_\alpha}(\mathcal{X})\subset JN_{(p,q,s)_\alpha}(\mathcal{X})$.

  To sum up, $JN_{(p,q,s)_\alpha}(\mathcal{X})=JN_{(p,1,s)_\alpha}(\mathcal{X})$ with
  equivalent norms. This finishes the proof of Proposition \ref{JNpqa=JNp1a}.
\end{proof}

Now, we prove Proposition \ref{HKuvb=HKu8b}. To this end, we begin with the following lemma.

\begin{lemma}\label{Cavalieri}
  Let $(\mathcal{Y},\mu)$ be a measure space, $v\in(1,\infty)$ and
  positive numbers $\{\delta_k\}_{k\in\nn}$ satisfy
  \begin{enumerate}
  \item[{\rm(i)}]$\delta_k\uparrow\infty$\, as\,  $k\to\infty$;
  \item[{\rm(ii)}]$\forall\, m\in\nn$,
  $\sum_{k=1}^m\lf(\frac{\delta_k}{\delta_m}\r)^{v}\le C$
  for some positive constant $C$ independent of $m$.
  \end{enumerate}
  Then, for any $f\in L^{v}(\mathcal{Y})$,
  $$\sum_{k=1}^\infty\delta_k^v\mu(\{x\in\mathcal{Y}:\,\,|f(x)|>\delta_k\})
  \le C\|f\|_{L^v(\mathcal{Y})}^v,$$
  where the positive constant $C$ is same as in (ii).
\end{lemma}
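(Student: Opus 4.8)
The plan is to prove this by a layer-cake (Cavalieri) decomposition of the level sets of $f$ followed by an interchange of summation order, exactly the scheme by which one shows $\sum_k 2^{kv}\mu(\{|f|>2^k\})\sim\|f\|_{L^v}^v$; hypothesis (ii) is precisely the abstract substitute for the elementary bound $\sum_{k\le m}2^{kv}\le C2^{mv}$ that makes such an argument run. For each $k\in\nn$ put $E_k:=\{x\in\cy:\,\,|f(x)|>\dz_k\}$; since $\{\dz_k\}_{k\in\nn}$ is nondecreasing, $E_1\supseteq E_2\supseteq\cdots$.

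First I would record the disjoint decomposition
$$E_k=\lf[\bigcup_{m=k}^\fz\lf(E_m\setminus E_{m+1}\r)\r]\cup\lf[\bigcap_{m=k}^\fz E_m\r],\quad\forall\,k\in\nn,$$
and note that, because $\dz_m\to\fz$ by (i), the set $\bigcap_{m=k}^\fz E_m$ equals $\{x\in\cy:\,\,|f(x)|=\fz\}$, which is $\mu$-null since $f\in L^v(\cy)$. Hence $\mu(E_k)=\sum_{m=k}^\fz\mu(E_m\setminus E_{m+1})$ for every $k\in\nn$.

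Next I would substitute this identity into the sum in question and interchange the two summations, which is legitimate by Tonelli's theorem because every term is nonnegative:
$$\sum_{k=1}^\fz\dz_k^v\mu(E_k)=\sum_{k=1}^\fz\dz_k^v\sum_{m=k}^\fz\mu(E_m\setminus E_{m+1})=\sum_{m=1}^\fz\mu(E_m\setminus E_{m+1})\sum_{k=1}^m\dz_k^v.$$
Applying hypothesis (ii) in the form $\sum_{k=1}^m\dz_k^v\le C\dz_m^v$ bounds the right-hand side by $C\sum_{m=1}^\fz\dz_m^v\mu(E_m\setminus E_{m+1})$. Finally, since $|f(x)|>\dz_m$ on $E_m\setminus E_{m+1}\subset E_m$, one has $\dz_m^v\mu(E_m\setminus E_{m+1})\le\int_{E_m\setminus E_{m+1}}|f|^v\,d\mu$, and the sets $\{E_m\setminus E_{m+1}\}_{m\in\nn}$ being pairwise disjoint subsets of $\cy$, summing in $m$ gives $\sum_{m=1}^\fz\dz_m^v\mu(E_m\setminus E_{m+1})\le\int_{\cy}|f|^v\,d\mu=\|f\|_{L^v(\cy)}^v$. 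Chaining the estimates yields $\sum_{k=1}^\fz\dz_k^v\mu(E_k)\le C\|f\|_{L^v(\cy)}^v$ with the same constant $C$ as in (ii).

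There is no serious obstacle here; the argument is a routine rearrangement. The only points needing care are the verification of the disjoint decomposition of $E_k$ together with the $\mu$-nullity of $\bigcap_{m}E_m$ (this is where both $f\in L^v(\cy)$ and $\dz_k\to\fz$ are used), and confirming that the interchange of the double sum is applied solely to nonnegative quantities, so that Tonelli's theorem applies with no integrability side conditions.
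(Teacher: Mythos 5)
Your proof is correct and follows essentially the same route as the paper's: decompose $E_k$ into the disjoint shells $E_m\setminus E_{m+1}=\{\delta_m<|f|\le\delta_{m+1}\}$, swap the order of summation by Tonelli, invoke hypothesis (ii), and then bound $\delta_m^v\mu(E_m\setminus E_{m+1})$ by $\int_{E_m\setminus E_{m+1}}|f|^v\,d\mu$. You are slightly more careful than the paper in explicitly checking that $\bigcap_m E_m=\{|f|=\infty\}$ is $\mu$-null, a point the paper leaves implicit.
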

\begin{proof}
  For any $f\in L^{v}(\mathcal{Y})$, by $v\in(1,\infty)$,
  (i) and (ii) of Lemma \ref{Cavalieri}, we have
  \begin{align*}
    &\sum_{k=1}^\infty\delta_k^v\mu(\{x\in\mathcal{Y}:\,\,|f(x)|>\delta_k\})\\
    &\quad=\sum_{k=1}^\infty\delta_k^{v}
    \int_{\{x\in\mathcal{Y}:\,\,|f(x)|>\delta_k\}}1\,d\mu
    =\sum_{k=1}^\infty\delta_k^{v}\sum_{m=k}^\infty
    \int_{\{x\in\mathcal{Y}:\,\,\delta_m<|f(x)|\le\delta_{m+1}\}}1\,d\mu\\
    &\quad=\sum_{m=1}^\infty\sum_{k=1}^m\delta_k^{v}\delta_m^{-v}
    \int_{\{x\in\mathcal{Y}:\,\,\delta_m<|f(x)|\le\delta_{m+1}\}}
    \delta_m^{v}\,d\mu
    \le\sum_{m=1}^\infty\sum_{k=1}^m\lf(\frac{\delta_k}{\delta_m}\r)^{v}
    \int_{\{x\in\mathcal{Y}:\,\,\delta_m<|f(x)|\le\delta_{m+1}\}}
    |f|^{v}\,d\mu\\
    &\quad\lesssim\sum_{m=1}^\infty\int_{\{x\in\mathcal{Y}:\,\,
    \delta_m<|f(x)|\le\delta_{m+1}\}}|f|^{v}\,d\mu
    \lesssim\int_{\mathcal{Y}}|f|^{v}\,d\mu
    \sim\|f\|_{L^{v}(\mathcal{Y})}^v.
  \end{align*}
  This finishes the proof of Lemma \ref{Cavalieri}.
\end{proof}

The next decomposition lemma is a generalization of \cite[Lemma 6.5]{DHKY18}
which corresponds to the case $s=0$.

\begin{lemma}\label{atomdecomposition}
  Let $C_1\in(2^n,\infty)$, $s\in\zz_+$, a closed cube $Q_0\subsetneqq\rn$,
  $f\in L_s^1(Q_0)$ and  $\lambda\geq\fint_{Q_0}|f|$. Then
  $$f-P_{Q_0}^{(s)}(f)=\sum_{k=0}^\infty\sum_j a_{k,j}$$
  almost everywhere, where $a_{k,j}\in L_s^\infty(Q_{k,j})$
  satisfies
  $\|a_{k,j}\|_{L^\infty(Q_0)}\le 2^{n+1} C_{(s)} C_1^{k+1}\lambda$
  for any $j$ with $C_{(s)}$ same as in \eqref{C_{(s)}},
  $\{Q_{k,j}\}_j$ is a collection of disjoint cubes in $Q_0$, satisfying
  $$\bigcup_j Q_{0,j}=Q_0\quad and\quad
  \bigcup_j Q_{k,j}=\lf\{x\in Q_0:\,\,\mathcal{M}_{Q_0}^{(d)}f(x)
    >C_1^k\lambda\r\},\,\,\forall\,k\in\nn.$$
\end{lemma}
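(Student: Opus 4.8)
The plan is to run an iterated dyadic Calder\'on--Zygmund decomposition of $f$ at the geometrically growing heights $C_1^k\lambda$, $k\in\zz_+$, and to read off the atoms $a_{k,j}$ as the ``telescoping differences'' of the associated good functions --- the classical atomic-decomposition argument, carried out with the polynomial projections $P_Q^{(s)}(f)$ in place of the averages $f_Q$. First I would note that, since $f\in L_s^1(Q_0)$, the defining relations \eqref{P_Q^s(f)} together with uniqueness give $P_{Q_0}^{(s)}(f)=0$, so the asserted identity is just $f=\sum_{k,j}a_{k,j}$; I keep $P_{Q_0}^{(s)}(f)$ only for notational symmetry. Put $\Omega_0:=Q_0$ and, for $k\in\nn$, $\Omega_k:=\{x\in Q_0:\,\mathcal{M}_{Q_0}^{(d)}(f)(x)>C_1^k\lambda\}$; since $C_1>2^n>1$ the sets $\Omega_k$ decrease. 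For each $k\in\nn$ apply Lemma \ref{CZdecomposition} to $f$ on $Q_0$ at height $C_1^k\lambda$, which is admissible since $C_1^k\lambda\ge\lambda\ge\fint_{Q_0}|f|$, to get disjoint dyadic cubes $\{Q_{k,j}\}_j\subset\mathcal{D}_{Q_0}$ with $\bigcup_jQ_{k,j}=\Omega_k$ and $C_1^k\lambda<\fint_{Q_{k,j}}|f|\le2^nC_1^k\lambda$; for $k=0$ take the single cube $Q_{0,1}:=Q_0$, so that $\bigcup_jQ_{0,j}=Q_0$.

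Next I would record the standard nesting: for $k\in\nn$ each $Q_{k+1,j'}$ is a dyadic cube with $\fint_{Q_{k+1,j'}}|f|>C_1^{k+1}\lambda>C_1^k\lambda$, hence lies inside one of the pairwise disjoint maximal dyadic cubes whose $|f|$-average exceeds $C_1^k\lambda$, i.e. inside a unique $Q_{k,j}$ (for $k=0$ this is trivial since every cube lies in $Q_{0,1}=Q_0$). Writing $I_{k,j}:=\{j':\,Q_{k+1,j'}\subset Q_{k,j}\}$ we then have $\bigsqcup_{j'\in I_{k,j}}Q_{k+1,j'}=Q_{k,j}\cap\Omega_{k+1}$. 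Define the good functions
$$g_k:=f\mathbf{1}_{Q_0\setminus\Omega_k}+\sum_j P_{Q_{k,j}}^{(s)}(f)\,\mathbf{1}_{Q_{k,j}},\qquad k\in\zz_+,$$
so $g_0=P_{Q_0}^{(s)}(f)\,\mathbf{1}_{Q_0}$. By Lemma \ref{CZdecomposition}(iii), $|\Omega_k|\le(C_1^k\lambda)^{-1}\|f\|_{L^1(Q_0)}\to0$, so $\bigcap_k\Omega_k$ is null and $g_k\to f$ almost everywhere. Splitting $Q_0=(Q_0\setminus\Omega_k)\sqcup\bigsqcup_jQ_{k,j}$ and, inside each $Q_{k,j}$, using $Q_{k,j}=(Q_{k,j}\setminus\Omega_{k+1})\sqcup\bigsqcup_{j'\in I_{k,j}}Q_{k+1,j'}$, a direct computation on these partitions gives $g_{k+1}-g_k=\sum_j a_{k,j}$, where
$$a_{k,j}:=\lf(f-P_{Q_{k,j}}^{(s)}(f)\r)\mathbf{1}_{Q_{k,j}\setminus\Omega_{k+1}}
 +\sum_{j'\in I_{k,j}}\lf(P_{Q_{k+1,j'}}^{(s)}(f)-P_{Q_{k,j}}^{(s)}(f)\r)\mathbf{1}_{Q_{k+1,j'}}.$$
Since the $Q_{k,j}$ are disjoint in $j$, the inner sum $\sum_j a_{k,j}$ converges pointwise; telescoping $\sum_{k=0}^{N-1}\sum_j a_{k,j}=g_N-g_0$ and letting $N\to\infty$ yields $f-P_{Q_0}^{(s)}(f)=\sum_{k=0}^\infty\sum_j a_{k,j}$ almost everywhere.

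Finally I would check the three properties of $a_{k,j}$. Support in $Q_{k,j}$ is clear because $Q_{k+1,j'}\subset Q_{k,j}$ for $j'\in I_{k,j}$. For the vanishing moments, fix $\gamma\in\zz_+^n$ with $|\gamma|\le s$: \eqref{P_Q^s(f)} applied on each $Q_{k+1,j'}$ gives $\int_{Q_{k+1,j'}}P_{Q_{k+1,j'}}^{(s)}(f)\,x^\gamma\,dx=\int_{Q_{k+1,j'}}f\,x^\gamma\,dx$, so the sum over $j'\in I_{k,j}$ collapses and $\int a_{k,j}\,x^\gamma\,dx=\int_{Q_{k,j}}\lf(f-P_{Q_{k,j}}^{(s)}(f)\r)x^\gamma\,dx=0$, again by \eqref{P_Q^s(f)}. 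For the uniform bound, estimate $a_{k,j}$ piecewise: on $Q_{k,j}\setminus\Omega_{k+1}$ one has $|f|\le\mathcal{M}_{Q_0}^{(d)}(f)\le C_1^{k+1}\lambda$ a.e.\ and, by \eqref{C_{(s)}} with $\fint_{Q_{k,j}}|f|\le2^nC_1^k\lambda$ and $2^n<C_1$, also $|P_{Q_{k,j}}^{(s)}(f)|\le C_{(s)}2^nC_1^k\lambda\le C_{(s)}C_1^{k+1}\lambda$, whence $|a_{k,j}|\le2C_{(s)}C_1^{k+1}\lambda$ there; on each $Q_{k+1,j'}$, \eqref{C_{(s)}} gives $|P_{Q_{k+1,j'}}^{(s)}(f)|\le C_{(s)}2^nC_1^{k+1}\lambda$ and $|P_{Q_{k,j}}^{(s)}(f)|\le C_{(s)}C_1^{k+1}\lambda$, whence $|a_{k,j}|\le(2^n+1)C_{(s)}C_1^{k+1}\lambda$. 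In every case $\|a_{k,j}\|_{L^\infty(Q_0)}\le2^{n+1}C_{(s)}C_1^{k+1}\lambda$ (the level $k=0$, where $\fint_{Q_0}|f|\le\lambda$, only improves these bounds), which is the claim. The part needing the most care is this last step: obtaining the \emph{same} constant $2^{n+1}C_{(s)}C_1^{k+1}\lambda$ on all the pieces of $a_{k,j}$ while simultaneously respecting the moment cancellation; the remaining bookkeeping is precisely that of the classical Calder\'on--Zygmund atomic decomposition, here written with $P_Q^{(s)}$ in place of $f_Q$.
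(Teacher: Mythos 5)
Your proof is correct and takes essentially the same approach as the paper: the same iterated dyadic Calder\'on--Zygmund decomposition at heights $C_1^k\lambda$, the same definition of the atoms $a_{k,j}$, and the same moment and $L^\infty$ checks. The only difference is presentational: you organize the telescoping identity through the ``good functions'' $g_k$ and the limit $g_N\to f$ a.e., whereas the paper verifies the telescoping sum pointwise via the maximal index $k_x$ with $x\in\Omega_{k_x}$; both amount to the same computation, and your extra remark that $P_{Q_0}^{(s)}(f)=0$ for $f\in L^1_s(Q_0)$ is a correct (if unneeded) observation.
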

\begin{proof}
  Let $Q_{0,0}:=Q_0$ and $Q_{0,j}:=\emptyset$ for any $j>0$.
  For any $k\in\nn$, applying Lemma \ref{CZdecomposition} to $f$ on $Q_0$
  at height $C_1^k\lambda$,
  we conclude that there exist cubes $\{Q_{k,j}\}_j$ such that
  $$\lf\{x\in Q_0:\,\,\mathcal{M}_{Q_0}^{(d)}f(x)>C_1^k\lambda\r\}
    =\bigcup_j Q_{k,j},$$
  where $\{Q_{k,j}\}_j$ are the maximal dyadic cubes satisfying
  $\fint_{Q_{k,j}}|f|>C_1^k\lambda$. By Lemma \ref{CZdecomposition}(ii)
  and $C_1\in(2^n,\infty)$, we know that $\fint_{Q_{k,j}}|f|\le2^n C_1^k \lambda
  <C_1^{k+1}\lambda$ and hence $Q_{k,j}\notin\{Q_{k+1,i}\}_i$.

  Now, we show that
  \begin{align}\label{fac}
  f(x)-P_{Q_0}^{(s)}(f)(x)=\sum_{k=0}^\infty\sum_j a_{k,j}(x)
    \quad \mathrm{almost\,\,every}\,\,x\in Q_0,
  \end{align}
  where
  $$a_{k,j}:=\lf[f-P_{Q_{k,j}}^{(s)}(f)\r]
    {\mathbf 1}_{Q_{k,j}\backslash\bigcup_i Q_{k+1,i}}
    +\sum_{i:\,\,Q_{k+1,i}\subset Q_{k,j}}
     \lf[P_{Q_{k+1,i}}^{(s)}(f)-P_{Q_{k,j}}^{(s)}(f)\r]
     {\mathbf 1}_{Q_{k+1,i}}.$$
  Indeed, from Lemma \ref{CZdecomposition}(iii), it follows that
  $\mathcal{M}_{Q_0}^{(d)}f(x)<\infty$ for almost every $x\in Q_0$.
  Thus, we have the following equalities, in almost everywhere sense,
  \begin{align*}
    f&=f{\mathbf 1}_{Q_0}=f{\mathbf 1}_{Q_0\backslash
      \{x\in Q_0:\,\,\mathcal{M}_{Q_0}^{(d)}f(x)>C_1\lambda\}}
      +f{\mathbf 1}_{\bigcup_{k=1}^\infty
      \{x\in Q_0:\,\,\mathcal{M}_{Q_0}^{(d)}f(x)>C_1^k\lambda\}
      \backslash\{x\in Q_0:\,\,\mathcal{M}_{Q_0}^{(d)}f(x)>C_1^{k+1}\lambda\}}\\
     &=\sum_{k=0}^\infty f{\mathbf 1}_{\bigcup_j Q_{k,j}\backslash
      \bigcup_i Q_{k+1,i}}
      =\sum_{k=0}^\infty \sum_j f{\mathbf 1}_{Q_{k,j}\backslash
      \bigcup_i Q_{k+1,i}}\\
     &=\sum_{k=0}^\infty \sum_j \lf[f-P_{Q_{k,j}}^{(s)}(f)\r]
       {\mathbf 1}_{Q_{k,j}\backslash\bigcup_i Q_{k+1,i}}
       +\sum_{k=0}^\infty \sum_j P_{Q_{k,j}}^{(s)}(f)
       {\mathbf 1}_{Q_{k,j}\backslash\bigcup_i Q_{k+1,i}}\\
     &=:\sum_{k=0}^\infty \sum_j \lf[f-P_{Q_{k,j}}^{(s)}(f)\r]
       {\mathbf 1}_{Q_{k,j}\backslash\bigcup_i Q_{k+1,i}}
       +\mathrm{I}.
  \end{align*}
  To prove \eqref{fac}, it suffices to show that
  \begin{align}\label{I}
    \mathrm{I}
    =\sum_{k=0}^\infty \sum_j \sum_{i:\,\,Q_{k+1,i}\subset Q_{k,j}}
     \lf[P_{Q_{k+1,i}}^{(s)}(f)-P_{Q_{k,j}}^{(s)}(f)\r]
     {\mathbf 1}_{Q_{k+1,i}}+P_{Q_0}^{(s)}(f)
  \end{align}
  almost everywhere in $Q_0$.

  Indeed, for any
  $x\in Q_0\backslash\cap_{k=0}^\infty
   \{x\in Q_0:\,\,\mathcal{M}_{Q_0}^{(d)}f(x)>C_1^k\lambda\}$,
  there exists a maximal
  $k_x:=\min\{k\in\nn:\,\,\mathcal{M}_{Q_0}^{(d)}f(x)\le C_1^k\lambda\}-1$
  such that $x\in Q_{k_x,j_{k_x}}$
  and $x\notin Q_{k_x+1,i}$ for any $i$. Moreover,
  $x\notin Q_{k,j}$ for any $k\in\{k_x+1,\dots\}$ and hence
  $${\mathbf 1}_{Q_{k,j}\backslash\cup_i Q_{k+1,i}}(x)=0,
    \quad\forall\,k\in\{k_x+1,\dots\}.$$
  On the other hand, when $k_x>0$, since
  $\{\{x\in Q_0:\,\,\mathcal{M}_{Q_0}^{(d)}f(x)>C_1^k\lambda\}\}_{k\in\nn}$ decreases as
  $k$ increases, it then follows that, for any $k\in\{0,\dots,k_x-1\}$, we have
  $\cup_\ell Q_{k_x,\ell}\subset\cup_i Q_{k+1,i}$
  which implies that $x\notin Q_{k,j}\backslash\cup_i Q_{k+1,i}$ and hence
  $${\mathbf 1}_{Q_{k,j}\backslash\cup_i Q_{k+1,i}}(x)=0,
    \quad\forall\,k\in\{0,\dots,k_x-1\}.$$
  Consequently,
  \begin{align*}
    \mathrm{I}(x)&=\sum_{k=0}^\infty \sum_j P_{Q_{k,j}}^{(s)}(f)(x)
    {\mathbf 1}_{Q_{k,j}\backslash\bigcup_i Q_{k+1,i}}(x)\\
    &=\sum_j P_{Q_{k_x,j}}^{(s)}(f)(x)
    {\mathbf 1}_{Q_{k_x,j}\backslash\bigcup_i Q_{k_x+1,i}}(x)
    =P_{Q_{k_x,j_{k_x}}}^{(s)}(f)(x).
  \end{align*}
Moreover, for any $k\in\{0,\dots, k_x\}$, by
  $x\in Q_{k_x,j_{k_x}}\subset\cup_i Q_{k_x,i}\subset\cup_j Q_{k,j}$
  and the disjointness of $\{Q_{k,j}\}_j$, we know that
  there exists a unique $j_k$ such that $x\in Q_{k,j_k}$.
  Therefore,
  \begin{align*}
    &\sum_{k=0}^\infty \sum_j \sum_{\{i:\,\,Q_{k+1,i}\subset Q_{k,j}\}}
     \lf[P_{Q_{k+1,i}}^{(s)}(f)(x)-P_{Q_{k,j}}^{(s)}(f)(x)\r]
     {\mathbf 1}_{Q_{k+1,i}}(x)\\
    &\quad=\sum_{k=0}^{k_x-1} \sum_{\{i:\,\,Q_{k+1,i}\subset Q_{k,j_x}\}}
     \lf[P_{Q_{k+1,i}}^{(s)}(f)(x)-P_{Q_{k,j_x}}^{(s)}(f)(x)\r]
     {\mathbf 1}_{Q_{k+1,i}}(x)\\
    &\quad=\sum_{k=0}^{k_x-1}
     \lf[P_{Q_{k+1,j_{k+1}}}^{(s)}(f)(x)-P_{Q_{k,j_k}}^{(s)}(f)(x)\r]
     {\mathbf 1}_{Q_{k+1,j_{k+1}}}(x)\\
    &\quad=P_{Q_{1,j_1}}^{(s)}(f)(x)-P_{Q_0}^{(s)}(f)(x)
           +P_{Q_{2,j_2}}^{(s)}(f)(x)-P_{Q_{1,j_1}}^{(s)}(f)(x)+\dots\\
    &\quad\quad+P_{Q_{k_x,j_{k_x}}}^{(s)}(f)(x)
                  -P_{Q_{k_x-1,j_{k_x-1}}}^{(s)}(f)(x)\\
    &\quad=P_{Q_{k_x,j_{k_x}}}^{(s)}(f)(x)-P_{Q_0}^{(s)}(f)(x)
     =\mathrm{I}(x)-P_{Q_0}^{(s)}(f)(x).
  \end{align*}
  Thus, \eqref{I} holds true almost everywhere and hence
  $f-P_{Q_0}^{(s)}(f)=\sum_{k=0}^\infty\sum_j a_{k,j}$ holds true almost everywhere.

  Next, we show that $\|a_{k,j}\|_{L^\infty(Q_0)} \le 2^{n+1} C_{(s)} C_1^{k+1}\lambda$
  for any $k\in\zz_+$ and $j$.
  Indeed, when $k=0$,
  $$\lf|P_{Q_{k,j}}^{(s)}(f)\r|\le C_{(s)}\fint_{Q_{k,j}}|f|\le C_{(s)}\fint_{Q_0}|f|
  \le C_{(s)}\lambda = C_{(s)}C_1^k\lambda.$$
  When $k\in\nn$, by (ii) and (iv) of Lemma \ref{CZdecomposition} and \eqref{C_{(s)}},
  for any $j$, we know that
  $$\lf|P_{Q_{k,j}}^{(s)}(f)\r|\le C_{(s)}\fint_{Q_{k,j}}|f|\le C_{(s)}2^nC_1^k\lambda$$
  and $|f(x)|\le C_1^{k+1}\lambda$ for almost every
  $x\in Q_{k,j}\backslash\cup_i Q_{k+1,i}$. These imply that, for any $k\in\zz_+$ and $j$,
  $$\lf\|a_{k,j}\r\|_{L^\infty(Q_0)} \le 2^{n+1} C_{(s)} C_1^{k+1}\lambda.$$
  Finally, it remains to  show that $\int_{Q_{k,j}}a_{k,j}(x)x^\gamma\,dx=0$
  for any $|\gamma|\le s$.
  Since both $Q_{k,j}$ and $\{Q_{k+1,i}\}_i$ are dyadic cubes, it follows that
  $$\lf(Q_{k,j}\backslash\bigcup_i Q_{k+1,i}\r)
   \bigcup\lf(\bigcup_{i:\,\, Q_{k+1,i}\subset Q_{k,j}} Q_{k+1,i}\r)
   =Q_{k,j}.$$
  By this and \eqref{P_Q^s(f)}, we conclude that
  \begin{align*}
    \int_{Q_{k,j}}a_{k,j}(x)x^\gamma\,dx
    &=\int_{Q_{k,j}\backslash\bigcup_i Q_{k+1,i}}
      \lf[f(x)-P_{Q_{k,j}}^{(s)}(f)(x)\r]x^\gamma\,dx\\
    &\quad+\sum_{\{i:\,\, Q_{k+1,i}\subset Q_{k,j}\}}\int_{Q_{k+1,i}}
       \lf[P_{Q_{k+1,i}}^{(s)}(f)(x)-P_{Q_{k,j}}^{(s)}(f)(x)\r]x^\gamma\,dx\\
    &=\int_{Q_{k,j}\backslash\bigcup_i Q_{k+1,i}}f(x)x^\gamma\,dx
      +\sum_{\{i:\,\, Q_{k+1,i}\subset Q_{k,j}\}}\int_{Q_{k+1,i}}
      P_{Q_{k+1,i}}^{(s)}(f)(x)x^\gamma\,dx\\
    &\quad-\int_{Q_{k,j}\backslash\bigcup_i Q_{k+1,i}}P_{Q_{k,j}}^{(s)}(f)(x)
      x^\gamma\,dx-\sum_{\{i:\,\, Q_{k+1,i}\subset Q_{k,j}\}}\int_{Q_{k+1,i}}
      P_{Q_{k,j}}^{(s)}(f)(x)x^\gamma\,dx\\
    &=\int_{Q_{k,j}\backslash\bigcup_i Q_{k+1,i}}f(x)x^\gamma\,dx
      +\sum_{\{i:\,\, Q_{k+1,i}\subset Q_{k,j}\}}\int_{Q_{k+1,i}}
      f(x)x^\gamma\,dx\\
    &\quad-\int_{Q_{k,j}}P_{Q_{k,j}}^{(s)}(f)(x)x^\gamma\,dx\\
    &=\int_{Q_{k,j}}\lf[f(x)-P_{Q_{k,j}}^{(s)}(f)(x)\r]x^\gamma\,dx=0.
  \end{align*}
  This finishes the proof of Lemma \ref{atomdecomposition}.
\end{proof}

\begin{proof}[Proof of Proposition \ref{HKuvb=HKu8b}]
  Let $1<u<v\le\fz$, $s\in\zz_+$ and $\alpha\in[0,\infty)$. We first show
that
\begin{equation}\label{4.1x}
HK_{(u,\fz,s)_\alpha}(\mathcal{X})\subset
  HK_{(u,v,s)_\alpha}(\mathcal{X}).
\end{equation}
To this end, let
  $g\in HK_{(u,\fz,s)_\alpha}(\mathcal{X})$.
  Then, by Definition \ref{HKuvb}, we know that there exist $(u,\fz,s)_\alpha$-polymers $\{g_i\}_i$ such that
  $g=\sum_i g_i$ in $(JN_{(u',1,s)_\alpha}(\mathcal{X}))^\ast$ and
  $$\sum_i\|g_i\|_{\widetilde{HK}_{(u,\fz,s)_\alpha}(\mathcal{X})}
  \lesssim\|g\|_{HK_{(u,\fz,s)_\alpha}(\mathcal{X})}.$$
  By Proposition \ref{JNpqa=JNp1a}, we conclude that
  $(JN_{(u',1,s)_\alpha}(\mathcal{X}))^\ast=(JN_{(u',v',s)_\alpha}(\mathcal{X}))^\ast$,
  which implies that $g=\sum_i g_i$ in $(JN_{(u',v',s)_\alpha}(\mathcal{X}))^\ast$.
  Moreover, from Definition \ref{uvas-polymer}, we deduce that, for any $i$,
  $$\|g_i\|_{\widetilde{HK}_{(u,v,s)_\alpha}(\mathcal{X})}
  \le\|g_i\|_{\widetilde{HK}_{(u,\fz,s)_\alpha}(\mathcal{X})}$$
  and hence
  $$\|g\|_{HK_{(u,v,s)_\alpha}(\mathcal{X})}
  \le \sum_i\|g_i\|_{\widetilde{HK}_{(u,v,s)_\alpha}(\mathcal{X})}
  \le \sum_i\|g_i\|_{\widetilde{HK}_{(u,\fz,s)_\alpha}(\mathcal{X})}
  \lesssim\|g\|_{HK_{(u,\fz,s)_\alpha}(\mathcal{X})},$$
which further implies \eqref{4.1x} holds true.

  To show the converse of \eqref{4.1x}, namely,
$HK_{(u,v,s)_\alpha}(\mathcal{X})\subset HK_{(u,\fz,s)_\alpha}(\mathcal{X})$,
  by $(JN_{(u',v',s)_\alpha}(\mathcal{X}))^\ast
  =(JN_{(u',1,s)_\alpha}(\mathcal{X}))^\ast$ again, it suffices to prove that
every $(u,v,s)_\alpha$-polymer $g$ can be decomposed into
  $g=\sum_{k=-1}^\infty g_k$ in $(JN_{(u',1,s)_\alpha}(\mathcal{X}))^\ast$,
  where $\{g_k\}_{k=-1}^\infty$ are $(u,\infty,s)_\alpha$-polymers, and
\begin{equation}\label{4.1y}
\sum_{k=-1}^\infty \|g_k\|_{\widetilde{HK}_{(u,\infty,s)_\alpha}(\mathcal{X})}
\lesssim\|g\|_{\widetilde{HK}_{(u,v,s)_\alpha}(\mathcal{X})}.
\end{equation}

Indeed, for any $(u,v,s)_\alpha$-polymer $g$, by Definition \ref{HKuvb},
we know that there exist $(u,v,s)_\alpha$-atoms
  $\{A_\ell\}_\ell$ on disjoint cubes $\{Q_\ell\}_\ell$ and
  $\{\lambda_\ell\}_\ell\subset\cc$
  such that $g=\sum_\ell \lambda_\ell A_\ell$ pointwise and
  \begin{align}\label{Aell<g}
    \lf(\sum_\ell |\lambda_\ell|^u\r)^\frac1u
    \lesssim\|g\|_{\widetilde{HK}_{(u,v,s)_\alpha}(\mathcal{X})}.
  \end{align}
For each $\ell$, applying Lemma \ref{atomdecomposition} with $f$ replaced by
  $A_\ell\in L_s^v(Q_\ell)$ and
  $\lambda:=\lf(\fint_{Q_\ell}|A_\ell|^v\r)^{\frac 1v}$,
we then obtain
$A_\ell-P_{Q_\ell}^{(s)}(A_\ell)=\sum_{k=0}^\infty\sum_j a_{k,j}^{(\ell)}$
  almost everywhere, where, for any $k\in\zz_+$ and any $j$,
$\supp(a_{k,j}^{(\ell)})\subset Q_{k,j}^{(\ell)}$,
$a_{k,j}^{(\ell)}\in L_s^\infty(Q_{k,j}^{(\ell)})$,
\begin{align}\label{akjl}
    \lf\|a_{k,j}^{(\ell)}\r\|_{L^\infty(Q_{k,j}^{(\ell)})}
    \lesssim C_1^k\lf(\fint_{Q_\ell}|A_\ell|^v\r)^{\frac 1v},
  \end{align}
$\{Q_{k,j}^{(\ell)}\}_j\subset Q_\ell$ are disjoint and
  \begin{align}\label{Qkjl}
    \bigcup_j Q_{0,j}^{(\ell)}=Q_\ell,\quad
    \bigcup_j Q_{k,j}^{(\ell)}
    =\lf\{x\in Q_\ell:\,\,\mathcal{M}_{Q_\ell}^{(d)}A_\ell(x)>
     C_1^k\lf(\fint_{Q_\ell}|A_\ell|^v\r)^{\frac 1v}\r\},\quad\forall\,k\in\nn.
  \end{align}
Since $\{Q_\ell\}_\ell$ are disjoint, it follows that
  $\{Q_{k,j}^{(\ell)}\}_{\ell,j}$ are disjoint for any $k\in\zz_+$.
  Therefore, we can define
  $$g_{-1}:=\sum_\ell \lambda_\ell P_{Q_\ell}^{(s)}(A_\ell)
  \quad\mathrm{and}\quad
  g_k:=\sum_{\ell,j}\lambda_\ell a_{k,j}^{(\ell)},
  \quad\forall\,k\in\zz_+$$
  almost everywhere, which implies that $g=\sum_{k=-1}^\fz g_k$ almost everywhere.
  It remains to show
  \begin{enumerate}
    \item[{\rm(i)}]For any $k\in\{-1,0,1,\dots\}$, $g_k$ is a $(u,\infty,s)_\alpha$-polymer;
    \item[{\rm(ii)}]$\sum_{k=-1}^\infty\|g_k\|_{\widetilde{HK}_{(u,\fz,s)_\alpha}(\mathcal{X})}
      \lesssim\|g\|_{\widetilde{HK}_{(u,v,s)_\alpha}(\mathcal{X})}$;
    \item[{\rm(iii)}]$g=\sum_{k=-1}^\infty g_k$ in $(JN_{(u',1,s)_\alpha}(\mathcal{X}))^\ast$.
  \end{enumerate}

  We first show (i).
  When $k=-1$, for any $\ell$, by $A_\ell\in L_s^v(Q_\ell)$, \eqref{P_Q^s(f)} and
  \eqref{C_{(s)}}, we have $P_{Q_\ell}^{(s)}(A_\ell)\in L_s^\infty(Q_\ell)$
  and $\|P_{Q_\ell}^{(s)}(A_\ell)\|_{L^\infty(Q_\ell)}
  \ls \fint_{Q_\ell}|A_\ell|$.
  Let $$\widetilde{A_\ell}:=\frac{|Q_\ell|^{-\frac1u-\alpha}}
  {\|P_{Q_\ell}^{(s)}(A_\ell)\|_{L^\infty(Q_\ell)}}P_{Q_\ell}^{(s)}(A_\ell).$$
  Then $\{\widetilde{A_\ell}\}_\ell$ are $(u,\fz,s)_\alpha$-atoms and
  $g_{-1}=\sum_\ell \lambda_\ell |Q_\ell|^{\frac1u+\alpha}
   \|P_{Q_\ell}^{(s)}(A_\ell)\|_{L^\infty(Q_\ell)}\widetilde{A_\ell}$.
  From this, the H\"{o}lder inequality,
  Definition \ref{uvas-atom}(ii) for $(u,v,s)_\alpha$-atoms $\{A_\ell\}_\ell$
  and \eqref{Aell<g}, we deduce that
  \begin{align}\label{g_-1}
    \|g_{-1}\|_{\widetilde{HK}(u,\infty,s)_\alpha(\mathcal{X})}
    &\le \lf[\sum_\ell |\lambda_\ell|^u|Q_\ell|^{1+u\alpha}\lf\|P_{Q_\ell}^{(s)}(A_\ell)\r\|
          _{L^\infty(Q_\ell)}^u\r]^\frac1u\\
    &\ls \lf[\sum_\ell |\lambda_\ell|^u |Q_\ell|^{1+u\alpha}
           \lf(\fint_{Q_\ell}|A_\ell|\r)^u\r]^\frac1u\notag\\
    &\ls \lf[\sum_\ell |\lambda_\ell|^u |Q_\ell|^{1+u\alpha}
           \lf(\fint_{Q_\ell}|A_\ell|^v\r)^\frac uv\r]^\frac1u\noz\\
    &\ls \lf[\sum_\ell |\lambda_\ell|^u |Q_\ell|^{1+u\alpha-\frac uv}
           \lf\|A_\ell \r\|_{L^v(Q_\ell)}^u\r]^\frac1u\notag\\
    &\ls \lf(\sum_\ell |\lambda_\ell|^u\r)^\frac1u
      \lesssim\|g\|_{\widetilde{HK}_{(u,v,s)_\alpha}(\mathcal{X})}\notag.
  \end{align}
  By \eqref{g_-1} and the disjointness of $\{Q_\ell\}_\ell$, we conclude that
  $g_{-1}$ is a $(u,\infty,s)_\alpha$-polymer.

  When $k=0$, since $Q_{0,0}^{(\ell)}=Q_\ell$ and $Q_{0,j}^{(\ell)}=\emptyset$
  for any $j>0$ as in Lemma \ref{atomdecomposition}, then
  $g_0=\sum_{\ell,j}\lambda_\ell a_{0,j}^{(\ell)}=\sum_\ell \lambda_\ell a_{0,0}^{(\ell)}$.
  By \eqref{akjl}, we have $\|a_{0,0}^{(\ell)}\|_{L^\infty(Q_\ell)}
    \lesssim (\fint_{Q_\ell}|A_\ell|^v)^{\frac 1v}$.
  Let $$\widetilde{a_{0,0}^{(\ell)}}:=\frac{|Q_\ell|^{-\frac1u-\alpha}}
  {\|a_{0,0}^{(\ell)}\|_{L^\infty(Q_\ell)}}a_{0,0}^{(\ell)}.$$
  Then $\{\widetilde{a_{0,0}^{(\ell)}}\}_\ell$ are $(u,\fz,s)_\alpha$-atoms and
  $g_0=\sum_\ell \lambda_\ell |Q_\ell|^{\frac1u+\alpha}
   \|a_{0,0}^{(\ell)}\|_{L^\infty(Q_\ell)} \widetilde{a_{0,0}^{(\ell)}}$.
  From this,  \eqref{akjl},
  Definition \ref{uvas-atom}(ii) for $(u,v,s)_\alpha$-atoms $\{A_\ell\}_\ell$
  and \eqref{Aell<g}, it follows that
  \begin{align}\label{g_0}
    \lf\|g_0\r\|_{\widetilde{HK}_{(u,\fz,s)_\alpha}(\mathcal{X})}
    &\le\lf[\sum_{\ell} |\lambda_\ell|^u \lf|Q_\ell\r|^{1+u\alpha}
      \lf\|a_{0,0}^{(\ell)}\r\|_{L^\infty(Q_\ell)}^u\r]^\frac1u
      \lesssim\lf[\sum_{\ell} |\lambda_\ell|^u \lf|Q_\ell\r|^{1+u\alpha}
      \lf(\fint_{Q_\ell}|A_\ell|^v\r)^{\frac uv}\r]^\frac1u \\
    &\sim \lf[\sum_{\ell} |\lambda_\ell|^u \lf|Q_\ell\r|^{1+u\alpha-\frac uv}
      \|A_\ell\|_{L^v(Q_\ell)}^u\r]^\frac1u
      \lesssim \lf(\sum_\ell |\lambda_\ell|^u\r)^\frac1u
      \lesssim \|g\|_{\widetilde{HK}_{(u,v,s)_\alpha}(\mathcal{X})}. \notag
  \end{align}
  By \eqref{g_0} and the disjointness of $\{Q_{0,j}^{(\ell)}\}_{\ell,j}$,
  we conclude that $g_{0}$ is a $(u,\infty,s)_\alpha$-polymer.

  When $k\in\nn$, for any $\ell$ and $j$,
  let $$\widetilde{a_{k,j}^{(\ell)}}:=\frac{|Q_{k,j}^{(\ell)}|^{-\frac1u-\alpha}}
  {\|a_{k,j}^{(\ell)}\|_{L^\infty(Q_{k,j}^{(\ell)})}}a_{k,j}^{(\ell)}.$$
  Then $\{\widetilde{a_{k,j}^{(\ell)}}\}_{\ell,j}$ are $(u,\fz,s)_\alpha$-atoms and
  $$g_k=\sum_{\ell,j}\lambda_\ell |Q_{k,j}^{(\ell)}|^{\frac1u+\alpha}
  \|a_{k,j}^{(\ell)}\|_{L^\infty(Q_{k,j}^{(\ell)})} \widetilde{a_{k,j}^{(\ell)}}.$$
  From this, \eqref{akjl}, \eqref{Qkjl}, $u\in(1,\fz)$, $\alpha\in[0,\infty)$,
  Definition \ref{uvas-atom}(ii) for $(u,v,s)_\alpha$-atoms
  $\{A_\ell\}_\ell$ and \eqref{Aell<g}, it follows that
  \begin{align}\label{gk polymer}
    \lf\|g_k\r\|_{\widetilde{HK}_{(u,\fz,s)_\alpha}(\mathcal{X})}
    &\le\lf[\sum_{\ell,j} |\lambda_\ell|^u \lf|Q_{k,j}^{(\ell)}\r|^{1+u\alpha}
      \lf\|a_{k,j}^{(\ell)}\r\|_{L^\infty(Q_{k,j}^{(\ell)})}^u\r]^\frac1u\\
    &\lesssim C_1^k\lf[\sum_{\ell,j} |\lambda_\ell|^u \lf|Q_{k,j}^{(\ell)}\r|^{1+u\alpha}
      \lf(\fint_{Q_\ell}|A_\ell|^v\r)^{\frac uv}\r]^\frac1u \notag\\
    &\lesssim C_1^k\lf[\sum_\ell |\lambda_\ell|^u \lf|
      \lf\{x\in Q_\ell:\,\,\mathcal{M}_{Q_\ell}^{(d)}A_\ell(x)>
      C_1^k\lf(\fint_{Q_\ell}|A_\ell|^v\r)^{\frac 1v}\r\}\r|^{1+u\alpha}\r.\notag\\
    &\lf.\qquad\qquad\times\lf(\fint_{Q_\ell}|A_\ell|^v\r)^{\frac uv}\r]^\frac1u\notag\\
    &\lesssim C_1^k\lf[\sum_{\ell} |\lambda_\ell|^u \lf|Q_\ell\r|^{1+u\alpha}
      \lf(\fint_{Q_\ell}|A_\ell|^v\r)^{\frac uv}\r]^\frac1u\notag\\
    &\lesssim C_1^k\lf(\sum_\ell |\lambda_\ell|^u\r)^\frac1u
      \lesssim C_1^k\|g\|_{\widetilde{HK}_{(u,v,s)_\alpha}(\mathcal{X})}. \notag
  \end{align}
  By \eqref{gk polymer} and the disjointness of $\{Q_{k,j}^{(\ell)}\}_{\ell,j}$,
  we conclude that $g_{k}$ is a $(u,\infty,s)_\alpha$-polymer, which completes
the proof of (i).

  Next, we show (ii).
  From \eqref{g_-1}, \eqref{g_0} and the proof of \eqref{gk polymer}, it follows that
  \begin{align*}
    &\sum_{k=-1}^\infty\lf\|g_k\r\|_{\widetilde{HK}_{(u,\fz,s)_\alpha}(\mathcal{X})}\\
    &\quad=\lf\|g_{-1}\r\|_{\widetilde{HK}_{(u,\fz,s)_\alpha}(\mathcal{X})}+
           \lf\|g_0\r\|_{\widetilde{HK}_{(u,\fz,s)_\alpha}(\mathcal{X})}+
           \sum_{k=1}^\fz\lf\|g_k\r\|_{\widetilde{HK}_{(u,\fz,s)_\alpha}(\mathcal{X})}
     \lesssim \|g\|_{\widetilde{HK}_{(u,v,s)_\alpha}(\mathcal{X})}   \\
    &\quad\quad+
      \sum_{k=1}^\infty C_1^k\lf[\sum_\ell |\lambda_\ell|^u \lf|
      \lf\{x\in Q_\ell:\,\,\mathcal{M}_{Q_\ell}^{(d)}A_\ell(x)>
      C_1^k\lf(\fint_{Q_\ell}|A_\ell|^v\r)^{\frac 1v}\r\}\r|^{1+u\alpha}
      \lf(\fint_{Q_\ell}|A_\ell|^v\r)^{\frac uv}\r]^\frac1u\\
    &\quad=:\|g\|_{\widetilde{HK}_{(u,v,s)_\alpha}(\mathcal{X})}+\mathrm{II}.
  \end{align*}
  By $u<v$, we can write $C_1^k=C_1^{-k\epsilon}C_1^{k \frac vu}$ with
  $\epsilon:=\frac vu-1>0$. From this, the H\"{o}lder inequality,
  $u\in(1,\fz)$, $\alpha\in[0,\infty)$, Lemma \ref{Cavalieri} with
  $\delta_k:=C_1^k\lf(\fint_{Q_\ell}|A_\ell|^v\r)^{1/v}$ [it is easy to see that
  $\{\delta_k\}_{k\in\nn}$ satisfy (i) and (ii) of Lemma \ref{Cavalieri} because
  $C_1\in(2^n,\fz)$],
  the boundedness of $\mathcal{M}_{Q_\ell}^{(d)}$ on $L^{v}(Q_\ell)$,
  Definition \ref{uvas-atom}(ii) for $(u,v,s)_\alpha$-atoms $\{A_\ell\}_\ell$
  and \eqref{Aell<g}, we deduce that
  \begin{align*}
    \mathrm{II}&=\sum_{k=1}^\infty C_1^{-k\epsilon}C_1^{k \frac vu}\lf[\sum_\ell
      |\lambda_\ell|^u \lf|
      \lf\{x\in Q_\ell:\,\,\mathcal{M}_{Q_\ell}^{(d)}A_\ell(x)>
      \delta_k\r\}\r|^{1+u\alpha}
      \lf(\fint_{Q_\ell}|A_\ell|^v\r)^{\frac uv}\r]^\frac1u\\
    &\le\lf(\sum_{k=1}^\infty C_1^{-k\epsilon u'}\r)^\frac{1}{u'}
      \lf[\sum_{k=1}^\infty C_1^{kv}\sum_\ell |\lambda_\ell|^u \lf|
      \lf\{x\in Q_\ell:\,\,\mathcal{M}_{Q_\ell}^{(d)}A_\ell(x)>
      \delta_k\r\}\r|^{1+u\alpha}
      \lf(\fint_{Q_\ell}|A_\ell|^v\r)^{\frac uv}\r]^\frac1u\\
    &\sim\lf[\sum_\ell |\lambda_\ell|^u \lf(\fint_{Q_\ell}|A_\ell|^v\r)^\frac{u-v}{v}
      \sum_{k=1}^\infty \delta_k^v
      \lf|\lf\{x\in Q_\ell:\,\,\mathcal{M}_{Q_\ell}^{(d)}A_\ell(x)>
      \delta_k\r\}\r|^{1+u\alpha}\r]^\frac1u\\
    &\lesssim\lf[\sum_\ell |\lambda_\ell|^u
      \lf(\fint_{Q_\ell}|A_\ell|^v\r)^\frac{u-v}{v}|Q_\ell|^{u\alpha}
      \sum_{k=1}^\infty \delta_k^v
      \lf|\lf\{x\in Q_\ell:\,\,\mathcal{M}_{Q_\ell}^{(d)}A_\ell(x)>
      \delta_k\r\}\r|\r]^\frac1u\\
    &\lesssim\lf[\sum_\ell |\lambda_\ell|^u
      \lf(\fint_{Q_\ell}|A_\ell|^v\r)^\frac{u-v}{v}|Q_\ell|^{u\alpha}
      \lf\|\mathcal{M}_{Q_\ell}^{(d)}A_\ell\r\|_{L^{v}(Q_\ell)}^v\r]^\frac1u\\
    &\lesssim\lf[\sum_\ell |\lambda_\ell|^u \lf(\fint_{Q_\ell}|A_\ell|^v\r)^\frac{u-v}{v}
      |Q_\ell|^{u\alpha}\lf\|A_\ell\r\|_{L^{v}(Q_\ell)}^v\r]^\frac1u\\
    &\sim\lf\{\sum_\ell |\lambda_\ell|^u |Q_\ell|^{u\alpha-\frac{u-v}{v}}
      \lf\|A_\ell\r\|_{L^{v}(Q_\ell)}^u\r\}^\frac1u
      \lesssim \lf\{\sum_\ell |\lambda_\ell|^u |Q_\ell|^{u\alpha-\frac{u-v}{v}}
      |Q_\ell|^{u(\frac1v-\frac1u-\alpha)}\r\}^\frac1u\\
    &\sim \lf(\sum_\ell |\lambda_\ell|^u\r)^\frac1u
    \lesssim\|g\|_{\widetilde{HK}_{(u,v,s)_\alpha}(\mathcal{X})}.
  \end{align*}

  Finally, we show (iii).
  Indeed, for any $K\in\{-1,0,1,...\}$ and $f\in JN_{(u',1,s)_\alpha}(\mathcal{X})$,
  by Proposition \ref{weak-star}, we conclude that
  \begin{align*}
    \lf|\int_\mathcal{X}\sum_{k=K}^\fz g_k f \r|
    \le \sum_{k=K}^\fz \lf\|g_k \r\|_{\widetilde{HK}_{(u,\fz,s)_\alpha}(\mathcal{X})}
        \lf\|f \r\|_{JN_{(u',1,s)_\alpha}(\mathcal{X})}.
  \end{align*}
  From this and $\sum_{k=-1}^\infty\|g_k\|_{\widetilde{HK}_{(u,\fz,s)_\alpha}(\mathcal{X})}
  \lesssim\|g\|_{\widetilde{HK}_{(u,v,s)_\alpha}(\mathcal{X})}$, it follows that,
  for any $f\in JN_{(u',1,s)_\alpha}(\mathcal{X})$,
  $$\sum_{k=-1}^\fz \int_\mathcal{X}g_k f= \int_\mathcal{X} \sum_{k=-1}^\fz g_k f$$
  and hence
  $$\langle g,f\rangle=\lf\langle\sum_{k=-1}^\fz g_k,f\r\rangle
  =\int_\mathcal{X} \sum_{k=-1}^\fz g_k f
  =\sum_{k=-1}^\fz \int_\mathcal{X}g_k f=\sum_{k=-1}^\fz \langle g_k,f\rangle,$$
  which implies that $g=\sum_{k=-1}^\infty g_k$ in $(JN_{(u',1,s)_\alpha}(\mathcal{X}))^\ast$.

  This finishes the proof of Proposition \ref{HKuvb=HKu8b}.
\end{proof}

\noindent\textbf{Acknowledgements}. Dachun Yang and Wen Yuan would like to thank Professors
Hong Yue and Juha Kinnunen, and Jin Tao would like to thank Ziyi He and Professor Dongyong Yang
for some useful discussions on the subject of this article.

\bigskip

\noindent Jin Tao, Dachun Yang  and Wen Yuan (Corresponding author)

\medskip

\noindent Laboratory of Mathematics and Complex Systems
(Ministry of Education of China),
School of Mathematical Sciences, Beijing Normal University,
Beijing 100875, People's Republic of China

\smallskip

\noindent{\it E-mails:} \texttt{jintao@mail.bnu.edu.cn} (J. Tao)

\noindent\phantom{{\it E-mails:}} \texttt{dcyang@bnu.edu.cn} (D. Yang)

\noindent\phantom{{\it E-mails:}} \texttt{wenyuan@bnu.edu.cn} (W. Yuan)

\end{document}